\def\@cite#1#2{[{{\bfseries #1}\if@tempswa , #2\fi}]}
\renewcommand{\section}{%
\@startsection{section}{1}{\z@}
{0.5truecm plus -1ex minus -.2ex}%
{1.0ex plus .2ex}{\bfseries\large}}
\def\@seccntformat#1{\csname the#1\endcsname.\ }
\numberwithin{equation}{section} 
\theoremstyle{theorem}
\newtheorem{thm}{Theorem}[section]
\newtheorem{lem}[thm]{Lemma}
\theoremstyle{definition}
\newtheorem{df}{Definition}[section]
\newtheorem{remark}{Remark}[section]
\newtheorem{example}{Example}[section]
\newtheorem*{prth1.1}{Proof of Theorem 1.1}
\newtheorem*{prth1.2e}{Proof of Theorem 1.2 (existence part)}
\newtheorem*{prth1.2u}{Proof of Theorem 1.2 (uniqueness part)}
\newtheorem*{prth1.3}{Proof of Theorem 1.3}
\newcommand{\ep}{\varepsilon}
\let\hat\widehat
\def\Pi{\hat\pi}
\begin{document}
\footnote[0]
    {2010 {\it Mathematics Subject Classification}\/: 
    35A35, 47N20, 35G30, 35L70.    
    }
\footnote[0]
    {{\it Key words and phrases}\/: 
    simultaneous abstract evolution equations; 
    linearized equations of coupled sound and heat flow; existence; time discretizations; 
    error estimates.} 
\begin{center}
    \Large{{\bf Time discretization of 
an abstract problem \\ 
applying to the linearized equations \\ 
of coupled sound and heat flow}}
\end{center}
\vspace{5pt}
\begin{center}
    Shunsuke Kurima%
    \\
    \vspace{2pt}
    Department of Mathematics, 
    Tokyo University of Science\\
    1-3, Kagurazaka, Shinjuku-ku, Tokyo 162-8601, Japan\\
    {\tt shunsuke.kurima@gmail.com}\\
    \vspace{2pt}
\end{center}
\begin{center}    
    \small \today
\end{center}

\vspace{2pt}
\newenvironment{summary}
{\vspace{.5\baselineskip}\begin{list}{}{%
     \setlength{\baselineskip}{0.85\baselineskip}
     \setlength{\topsep}{0pt}
     \setlength{\leftmargin}{12mm}
     \setlength{\rightmargin}{12mm}
     \setlength{\listparindent}{0mm}
     \setlength{\itemindent}{\listparindent}
     \setlength{\parsep}{0pt}
     \item\relax}}{\end{list}\vspace{.5\baselineskip}}
\begin{summary}
{\footnotesize {\bf Abstract.} 
In this paper 
we deal with an abstract problem 
which includes 
the linearized equations of coupled sound and heat flow 
as an example. 
Recently, 
a time discretization of a simultaneous abstract evolution equation 
applying to some parabolic-hyperbolic phase-field systems 
has been studied. 
This paper focuses on a time discretization of 
an abstract problem applying to the linearized equations of coupled sound and heat flow. 
Also, this paper gives  
some parabolic-hyperbolic phase-field systems 
as examples.  
}
\end{summary}
\vspace{10pt}

\newpage

\section{Introduction} \label{Sec1}

Matsubara--Yokota \cite{MY2016} 
have established existence and uniqueness of solutions to 
the initial-boundary value problem for the linearized equations 
of coupled sound and heat flow  
\begin{equation*}
     \begin{cases}
         \theta_{t} + (\gamma-1)\varphi_{t} - \sigma\Delta\theta = 0      
         & \mbox{in}\ \Omega\times(0, \infty), 
 \\[1mm]
        \varphi_{tt}  - c^2 \Delta\varphi 
        -m^2 \varphi = -c^2 \Delta\theta 
         & \mbox{in}\ \Omega\times(0, \infty), 
 \\[1mm] 
         \theta = \varphi = 0 
         & \mbox{on}\ \partial\Omega\times(0, \infty),  
 \\[1mm]
         \theta(0) = \theta_{0},\ \varphi(0) = \varphi_{0},\ \varphi_{t}(0) = v_{0}  
         &\mbox{in}\ \Omega                                      
     \end{cases}
\end{equation*}
by applying the Hille--Yosida theorem 
and have derived regularity of solutions, 
where $c>0$, $\sigma>0$, $m \in \mathbb{R}$ and $\gamma>1$ are constants, 
$\Omega \subset \mathbb{R}^{d}$ ($d \in \mathbb{N}$) 
is a domain with smooth bounded boundary $\partial\Omega$
and $\theta_{0}$, $\varphi_{0}$, $v_{0}$ are given functions.   

The paper \cite{K4} 
has proved   
existence of solutions to 
the initial valued problem for the simultaneous abstract evolution equation 
\begin{equation*}
     \begin{cases}
         \dfrac{d\theta}{dt} + \dfrac{d\varphi}{dt} + A_{1}\theta = f      
         & \mbox{in}\ (0, T), 
 \\[4mm]
 L\dfrac{d^2\varphi}{dt^2} + B\dfrac{d\varphi}{dt} + A_{2}\varphi
         + \Phi\varphi + {\cal L}\varphi = \theta   
         & \mbox{in}\ (0, T), 
 \\[2mm] 
       \theta(0) = \theta_{0},\ \varphi(0) = \varphi_{0},\ \dfrac{d\varphi}{dt}(0) = v_{0}      
     \end{cases}
 \end{equation*}
by employing a time discretization scheme 
in reference to \cite{CF1996, CK} 
and 
has obtained an error estimate for the difference 
between continuous and discrete solutions.  
Here $T>0$,  
$L : H \to H$ is a linear positive selfadjoint operator, 
$B : D(B) \subset H \to H$, $A_{j} : D(A_{j}) \subset H \to H$ ($j = 1, 2$) are 
linear maximal monotone selfadjoint  operators, 
$H$ and $V$ are real Hilbert spaces satisfying $V \subset H$, 
$V_{j}$ ($j=1, 2$) are linear subspaces of $V$ 
satisfying $D(A_{j}) \subset V_{j}$ ($j=1, 2$), 
$\Phi : D(\Phi) \subset H \to H$ is a maximal monotone operator,   
${\cal L} : H \to H$ is a Lipschitz continuous operator,   
$f : (0, T) \to H$ and $\theta_{0} \in V_{1}$, $\varphi_{0}, v_{0} \in V_{2}$ 
are given. 
Moreover, 
the paper \cite{K4} has assumed some conditions 
in reference to \cite[Section 2]{CF1996} 
and assumptions in 
\cite{CK, GP2003, GP2004, GPS2006, WGZ2007, WGZ2007dynamicalBD}, 
and has given    
some parabolic-hyperbolic phase-field systems 
under 
homogeneous Dirichlet--Dirichlet boundary conditions 
or homogeneous Dirichlet--Neumann boundary conditions 
or homogeneous Neumann--Dirichlet boundary conditions 
or homogeneous Neumann--Neumann 
boundary conditions as examples.  

In this paper we consider 
the abstract problem 
%
%
 \begin{equation*}\tag*{(P)}\label{P}
     \begin{cases}
         \dfrac{d\theta}{dt} + \eta\dfrac{d\varphi}{dt} + A_{1}\theta = 0     
         & \mbox{in}\ (0, T), 
 \\[4mm]
 L\dfrac{d^2\varphi}{dt^2} + B_{1}\dfrac{d\varphi}{dt} + A_{2}\varphi
         + \Phi\varphi + {\cal L}\varphi = B_{2}\theta   
         & \mbox{in}\ (0, T), 
 \\[2mm] 
       \theta(0) = \theta_{0},\ \varphi(0) = \varphi_{0},\ \dfrac{d\varphi}{dt}(0) = v_{0},    
     \end{cases}
 \end{equation*}
where $T>0$, $\eta>0$,   
$L : H \to H$ is a linear positive selfadjoint operator, 
$B_{j} : D(B_{j}) \subset H \to H$, $A_{j} : D(A_{j}) \subset H \to H$ ($j = 1, 2$) are 
linear maximal monotone selfadjoint operators, 
$D(A_{j}) \subset V$ ($j=1, 2$), 
$\Phi : D(\Phi) \subset H \to H$ is a maximal monotone operator,   
${\cal L} : H \to H$ is a Lipschitz continuous operator,   
$\theta_{0}, \varphi_{0}, v_{0} \in V$ 
are given.  
Moreover, we deal with the problem 
%
%
%
 \begin{equation*}\tag*{(P)$_{n}$}\label{Pn}
     \begin{cases}
      \delta_{h}\theta_{n} + \eta\delta_{h}\varphi_{n} + A_{1}\theta_{n+1}= 0,  
         \\
       L z_{n+1} + B_{1}v_{n+1} + A_{2}\varphi_{n+1} 
      + \Phi\varphi_{n+1} + {\cal L}\varphi_{n+1} = B_{2}\theta_{n+1}, 
          \\
       z_{0}=z_{1},\ z_{n+1} = \delta_{h}v_{n}, 
         \\
      v_{n+1}=\delta_{h}\varphi_{n}  
     \end{cases}
 \end{equation*}
for $n=0, ..., N-1$,  
where $h=\frac{T}{N}$, $N \in \mathbb{N}$, 
\begin{align}\label{deltah}
\delta_{h}\theta_{n} := \dfrac{\theta_{n+1}-\theta_{n}}{h}, \ 
\delta_{h}\varphi_{n} := \dfrac{\varphi_{n+1}-\varphi_{n}}{h}, \ 
\delta_{h}v_{n} := \dfrac{v_{n+1}-v_n}{h}.   
\end{align}
Here, putting 
\begin{align}
& \widehat{\theta}_{h}(0) := \theta_{0},\  
   \frac{d\widehat{\theta}_{h}}{dt}(t) := \delta_{h}\theta_{n}, \quad
   \widehat{\varphi}_{h}(0) := \varphi_{0},\ 
   \frac{d\widehat{\varphi}_{h}}{dt}(t) := \delta_{h}\varphi_{n},    \label{hat1} 
\\[3mm] 
&\widehat{v}_{h}(0) := v_{0},\   
   \frac{d\widehat{v}_{h}}{dt}(t) := \delta_{h}v_{n}, \label{hat2} 
\\[2mm] 
&\overline{\theta}_{h}(t) := \theta_{n+1},\ \overline{z}_{h}(t) := z_{n+1},\ 
\overline{\varphi}_{h}(t) := \varphi_{n+1},\ \overline{v}_{h}(t) := v_{n+1}  
\label{overandunderline}   
\end{align}
for a.a.\ $t \in (nh, (n+1)h)$, $n=0, ..., N-1$, 
we can rewrite \ref{Pn} as  
%
%
%
 \begin{equation*}\tag*{(P)$_{h}$}\label{Ph}
     \begin{cases}
       \dfrac{d\widehat{\theta}_{h}}{dt} + \eta\dfrac{d\widehat{\varphi}_{h}}{dt}  
       + A_{1}\overline{\theta}_{h}= 0
      \quad \mbox{in}\ (0, T),  
      \\[1.5mm]
         L\overline{z}_{h} + B_{1}\overline{v}_{h} + A_{2}\overline{\varphi}_{h} 
         + \Phi\overline{\varphi}_{h} + {\cal L}\overline{\varphi}_{h} 
         = B_{2}\overline{\theta}_{h} 
     \quad \mbox{in}\ (0, T),  
     \\[0.5mm] 
        \overline{z}_{h} = \dfrac{d\widehat{v}_{h}}{dt},\ 
        \overline{v}_{h} = \dfrac{d\widehat{\varphi}_{h}}{dt} 
     \quad \mbox{in}\ (0, T),   
     \\[1.5mm] 
         \widehat{\theta}_{h}(0)=\theta_{0},\ 
         \widehat{\varphi}_{h}(0) = \varphi_{0},\ \widehat{v}_{h}(0)=v_{0}.     
     \end{cases}
 \end{equation*}
\smallskip

We will assume the following conditions (A1)-(A12): 
%
%
%
 \begin{enumerate} 
 \setlength{\itemsep}{2mm}
 \item[(A1)] 
 $V$ and $H$ are real Hilbert spaces satisfying $V \subset H$ with 
 dense, continuous and compact embedding. Moreover, 
 the inclusions $V \subset H \subset V^{*}$ hold 
 by identifying $H$ with its dual space $H^{*}$, where 
 $V^{*}$ is the dual space of $V$.  
 \item[(A2)] 
 $L : H \to H$ is a bounded linear operator fulfilling 
    \[
    (Lw, z)_{H} = (w, Lz)_{H}\ \mbox{for all}\ w, z \in H, \quad  
    (Lw, w)_{H} \geq c_{L}\|w\|_{H}^2\ \mbox{for all}\ w \in H,   
    \]
 where $c_{L} > 0$ is a constant. 
 \item[(A3)]
 $A_{j} : D(A_{j}) \subset H \to H$ ($j=1, 2$) 
 are linear maximal monotone selfadjoint operators, 
 where $D(A_{j})$ ($j=1, 2$) are linear subspaces of $H$ and $D(A_{j}) \subset V$ ($j=1, 2$).  
 Moreover, there exist bounded linear monotone operators
 $A_{j}^{*} : V \to V^{*}$ ($j=1, 2$) such that  
 \begin{align*}
 &\langle A_{j}^{*}w, z \rangle_{V^{*}, V} 
 = \langle A_{j}^{*}z, w \rangle_{V^{*}, V} 
 \quad\mbox{for all}\ w, z \in V, 
 \\  
 &A_{j}^{*}w = A_{j}w \quad\mbox{for all}\ w \in D(A_{j}). 
 \end{align*}   
 Moreover, for all $\alpha>0$ and for $j=1, 2$ 
 there exists $\omega_{j, \alpha}>0$ such that 
 $$
 \langle A_{j}^{*}w, w \rangle_{V^{*}, V} + \alpha\|w\|_{H}^2 
 \geq \omega_{j, \alpha}\|w\|_{V}^2 
 \quad \mbox{for all}\ w \in V. 
 $$ 
 \item[(A4)] 
 $B : D(B_{j}) \subset H \to H$ ($j=1, 2$) are 
 linear maximal monotone selfadjoint  operators, 
 where $D(B_{j})$ ($j=1, 2$) are linear subspaces of $H$, 
 satisfying 
$D(A_{1}) \subset D(B_{2})$ and 
 \begin{align*}
 &D(B_{1}) \cap D(A_{2}) \neq \emptyset, 
 \\ 
 &(B_{1}w, A_{2}w)_{H} \geq 0 \quad \mbox{for all}\ w \in D(B_{1}) \cap D(A_{2}), 
 \\
 &(B_{2}w, A_{1}w)_{H} \geq 0 \quad \mbox{for all}\ w \in D(A_{1}), 
 \\
 &(B_{1}w, A_{2}z)_{H} = (B_{1}z, A_{2}w)_{H} 
                                    \quad \mbox{for all}\ w, z \in D(B_{1}) \cap D(A_{2}).   
 \end{align*} 
 Moreover, the inclusion $D(A_{2}) \subset V$ holds. 
 \item[(A5)] 
 There exists a constant $C_{A_{1}, B_{2}}>0$ such that 
 $$
 \|B_{2}\theta\|_{H} \leq C_{A_{1}, B_{2}}(\|A_{1}\theta\|_{H} + \|\theta\|_{H})
 \quad \mbox{for all}\ \theta \in D(A_{1}). 
 $$
 \item[(A6)] 
 $\Phi : D(\Phi) \subset H \to H$ is a maximal monotone operator 
 satisfying $\Phi(0) = 0$ and $V \subset D(\Phi)$. 
 Moreover, there exist constants $p, q, C_{\Phi} > 0$ such that  
 \[
 \|\Phi w - \Phi z\|_{H} \leq C_{\Phi}(1 + \|w\|_{V}^p + \|z\|_{V}^{q})\|w-z\|_{V}
 \quad \mbox{for all}\ w, z \in V. 
 \]
 \item[(A7)] 
 There exists a lower semicontinuous convex function  
 $i : V \to \{x \in \mathbb{R}\ |\ x\geq0 \}$ 
 such that $(\Phi w, w-z)_{H} \geq i(w) - i(z)$ for all $w, z \in V$.
 \item[(A8)] 
 $\Phi_{\lambda}(0) = 0$, 
 $(\Phi_{\lambda}w, B_{1}w)_{H} \geq 0$  
 for all $w \in D(B_{1})$, 
 $(\Phi_{\lambda}w, A_{2}w)_{H} \geq 0$  
 for all $w \in D(A_{2})$, 
 where $\lambda > 0$ and $\Phi_{\lambda} : H \to H$ is  
 the Yosida approximation of $\Phi$. 
 \item[(A9)] 
 $B^{*}_{j} : V \to V^{*}$ ($j=1, 2$) are bounded linear monotone operators fulfilling 
 \begin{align*}
 &\langle B^{*}_{j}w, z \rangle_{V^{*}, V} = \langle B^{*}_{j}z, w \rangle_{V^{*}, V}  
 \quad\mbox{for all}\ w, z \in V,  
 \\  
 &B^{*}_{j}w = B_{j}w \quad\mbox{for all}\ w \in D(B_{j}) \cap V. 
 \end{align*}
 \item[(A10)] 
 For all $g \in H$, $a, b, c, d, d' > 0$, $\lambda>0$, if there exists 
 $\varphi_{\lambda} \in V$ such that 
 $$
 L\varphi_{\lambda} + aB_{1}^{*}\varphi_{\lambda} + bA_{2}^{*}\varphi_{\lambda} 
 + c\Phi_{\lambda}\varphi_{\lambda} + d{\cal L}\varphi_{\lambda} 
 + d'B_{2}(I+hA_{1})^{-1}\varphi_{\lambda}  
 = g 
 \quad \mbox{in}\ V^{*},  
 $$ 
 then it follows that $\varphi_{\lambda} \in D(B_{1}) \cap D(A_{2})$ and 
 $$
 L\varphi_{\lambda} + aB_{1}\varphi_{\lambda} + bA_{2}\varphi_{\lambda} 
 + c\Phi_{\lambda}\varphi_{\lambda} + d{\cal L}\varphi_{\lambda} 
 + d'B_{2}(I+hA_{1})^{-1}\varphi_{\lambda}  
 = g 
 \quad \mbox{in}\ H.   
 $$ 
 \item[(A11)] 
 ${\cal L} : H \to H$ is a Lipschitz continuous operator 
 with Lipschitz constant $C_{{\cal L}}>0$.
 \item[(A12)] 
 $\theta_{0} \in D(A_{1})$, $A_{1}\theta_{0} \in V$, 
 $\varphi_{0} \in  D(B_{1}) \cap D(A_{2})$, 
 $v_{0} \in D(B_{1}) \cap V$.      
 \end{enumerate}
We set the conditions (A2) and (A3) 
in reference to \cite[Section 2]{CF1996}.  
The condition (A10) is equivalent to the elliptic regularity theory 
under some cases (see Section \ref{Sec2}). 
Moreover, we set the conditions (A6)-(A8) and (A11)  
by trying to keep typical examples 
of not only the linearized equations of coupled sound and heat flow 
but also some parabolic-hyperbolic phase-field systems 
(see Section \ref{Sec2})  
in reference to assumptions in 
\cite{CK, GP2003, GP2004, GPS2006, WGZ2007, WGZ2007dynamicalBD}.    

\smallskip

\begin{remark}
Owing to  \eqref{hat1}-\eqref{overandunderline}, 
the reader can check directly the following identities: 
\begin{align}
&\|\widehat{\varphi}_h\|_{L^{\infty}(0, T; V)} 
= \max\{\|\varphi_{0}\|_{V}, \|\overline{\varphi}_h\|_{L^{\infty}(0, T; V)}\}, 
\label{rem1}
\\[6mm]
&\|\widehat{v}_h\|_{L^{\infty}(0, T; V)} 
= \max\{\|v_{0}\|_{V}, \|\overline{v}_h\|_{L^{\infty}(0, T; V)}\}, \label{rem2} 
\\[6mm] 
&\|\widehat{\theta}_h\|_{L^{\infty}(0, T; V)} 
= \max\{\|\theta_{0}\|_{V}, 
                           \|\overline{\theta}_h\|_{L^{\infty}(0, T; V)}\}, \label{rem3} 
\\[5mm] 
&\|\overline{\varphi}_h - \widehat{\varphi}_h\|_{L^{\infty}(0, T; V)} 
= h\Bigl\|\frac{d\widehat{\varphi}_h}{dt}\Bigr\|_{L^{\infty}(0, T; V)} 
= h\|\overline{v}_h\|_{L^{\infty}(0, T; V)} , \label{rem4}  
\\[3mm] 
&\|\overline{v}_h - \widehat{v}_h\|_{L^{\infty}(0, T; H)} 
= h\Bigl\|\frac{d\widehat{v}_h}{dt}\Bigr\|_{L^{\infty}(0, T; H)} 
= h\|\overline{z}_h\|_{L^{\infty}(0, T; H)} , \label{rem5} 
\\[3mm] 
&\|\overline{\theta}_h - \widehat{\theta}_h\|_{L^2(0, T; V)}^2 
= \frac{h^2}{3}\Bigl\|\frac{d\widehat{\theta}_h}{dt}\Bigr\|_{L^2(0, T; V)}^2. \label{rem6}
\end{align}  
\end{remark} 
\smallskip

%
%
%
We define solutions of \ref{P} as follows. 
\begin{df}
A pair $(\theta, \varphi)$ with 
\begin{align*}
&\theta \in H^1(0, T; V) \cap L^{\infty}(0, T; V) \cap L^{\infty}(0, T; D(A_{1})),                  
\\ 
&\varphi \in W^{2, \infty}(0, T; H) \cap W^{1, \infty}(0, T; V) 
                                                                     \cap L^2(0, T; D(A_{2})), 
\\[1mm] 
&\frac{d\varphi}{dt} \in L^2(0, T; D(B_{1})),\ \Phi\varphi \in L^{\infty}(0, T; H) 
\end{align*}
is called a solution of \ref{P} if $(\theta, \varphi)$ satisfies 
\begin{align}
&\dfrac{d\theta}{dt} + \eta\dfrac{d\varphi}{dt} + A_{1}\theta = 0   
    \quad\mbox{in}\ H   \quad \mbox{a.e.\ on}\ (0, T), \label{df1} 
\\[2mm] 
&L\dfrac{d^2\varphi}{dt^2} + B_{1}\dfrac{d\varphi}{dt} + A_{2}\varphi
         + \Phi\varphi + {\cal L}\varphi = B_{2}\theta   
         \quad\mbox{in}\ H   \quad \mbox{a.e.\ on}\ (0, T), \label{df2} 
\\ 
&\theta(0) = \theta_{0},\ \varphi(0) = \varphi_{0},\ \dfrac{d\varphi}{dt}(0) = v_{0}                                
        \quad\mbox{in}\ H. \label{df3}
\end{align}
\end{df}

\medskip

Now the main results read as follows. 

\begin{thm}\label{maintheorem1}
Assume that {\rm (A1)-(A12)} hold. 
Then there exists $h_{0} \in (0, 1)$ such that  
for all $h \in (0, h_{0})$ 
there exists a unique solution $(\theta_{n+1}, \varphi_{n+1})$ 
of {\rm \ref{Pn}} 
satisfying 
$$
\theta_{n+1} \in D(A_{1}),\ \varphi_{n+1} \in D(B_{1}) \cap D(A_{2}) 
\quad\mbox{for}\ n=0, ..., N-1.
$$ 
\end{thm}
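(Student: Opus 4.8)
The plan is to prove existence and uniqueness of the discrete solution by induction on $n$, reducing at each step to solving a single elliptic problem for $\varphi_{n+1}$. Assume $\theta_n, \varphi_n, v_n$ are known (with $\theta_n \in D(A_1)$, $\varphi_n \in D(B_1)\cap D(A_2)$, $v_n \in D(B_1)\cap V$ from the previous step or from (A12)). First I would solve the first equation of \ref{Pn} for $\theta_{n+1}$: since $A_1$ is linear maximal monotone, $(I + hA_1)$ is invertible with bounded inverse $(I+hA_1)^{-1}$ on $H$, and from $\delta_h\theta_n + \eta\delta_h\varphi_n + A_1\theta_{n+1} = 0$ together with $v_{n+1} = \delta_h\varphi_n$ and $z_{n+1} = \delta_h v_n$ one gets
\[
\theta_{n+1} = (I+hA_1)^{-1}\bigl(\theta_n - h\eta\,\delta_h\varphi_n\bigr)
= (I+hA_1)^{-1}\bigl(\theta_n - \eta(\varphi_{n+1} - \varphi_n)\bigr).
\]
Thus $\theta_{n+1}$ is an affine function of $\varphi_{n+1}$, and $\theta_{n+1} \in D(A_1)$ automatically; moreover $A_1\theta_{n+1} \in V$ will follow from a bootstrap using (A3) once $\varphi_{n+1}\in V$ is known (one needs $\theta_n - \eta(\varphi_{n+1}-\varphi_n)$ to lie in the right space, which it does by the inductive hypothesis $A_1\theta_n \in V$ and $\varphi_0, v_0 \in V$, hence $\varphi_{n+1}\in V$). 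Also by (A5), $B_2\theta_{n+1}$ makes sense in $H$.

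Substituting this expression for $\theta_{n+1}$ into the second equation, and writing $v_{n+1} = \delta_h\varphi_n = \frac{1}{h}(\varphi_{n+1}-\varphi_n)$, $z_{n+1} = \delta_h v_n = \frac{1}{h}(v_{n+1}-v_n) = \frac{1}{h^2}(\varphi_{n+1}-\varphi_n) - \frac{1}{h}v_n$, I would obtain an equation of the form
\[
\Bigl(\tfrac{1}{h^2}L + \tfrac{1}{h}B_1 + A_2 + \Phi + {\cal L}\Bigr)\varphi_{n+1}
+ \tfrac{\eta}{h}B_2(I+hA_1)^{-1}\varphi_{n+1}
= g_n,
\]
where $g_n \in H$ is an explicit datum built from $\theta_n, \varphi_n, v_n$ via $L, B_1, B_2, (I+hA_1)^{-1}$ — this is exactly the structure anticipated in hypothesis (A10), with $a = \frac1h$, $b=1$, $c=1$, $d=1$, $d' = \frac{\eta}{h}$ (after dividing through appropriately, or rescaling $L$ by $h^2$). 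The strategy for this elliptic problem is the standard Yosida-regularization/maximal-monotone approach: first replace $\Phi$ by its Yosida approximation $\Phi_\lambda$ (which is Lipschitz on $H$ by definition, and monotone, with $\Phi_\lambda(0)=0$ by (A8)), and pose the regularized problem in $V^*$,
\[
L\varphi_\lambda + \tfrac1h B_1^*\varphi_\lambda + A_2^*\varphi_\lambda + \Phi_\lambda\varphi_\lambda
+ {\cal L}\varphi_\lambda + \tfrac{\eta}{h}B_2(I+hA_1)^{-1}\varphi_\lambda = g_n
\quad\text{in }V^*.
\]
The operator on the left is, for small enough $h$, a coercive (using (A2) for $L$, (A3) for $A_2^*$ to absorb the lower-order and Lipschitz perturbations from ${\cal L}$ and from $B_2(I+hA_1)^{-1}$ — here the bound $\|(I+hA_1)^{-1}\|\le 1$ and (A5) with the $V\hookrightarrow H$ compact embedding keep the perturbation controlled) bounded hemicontinuous monotone operator $V\to V^*$, so by the Browder–Minty surjectivity theorem it has a solution $\varphi_\lambda \in V$. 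Uniqueness at the regularized level is immediate from strict monotonicity (the $L$-term alone is strictly monotone after the coercivity estimate).

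Next I would derive $\lambda$-independent a priori estimates on $\varphi_\lambda$ in $V$ (and on $B_1^*\varphi_\lambda$, $A_2^*\varphi_\lambda$, $\Phi_\lambda\varphi_\lambda$ in $H$) by testing the equation against $\varphi_\lambda$, against $B_1\varphi_\lambda$, and against $A_2\varphi_\lambda$, using the sign conditions in (A4) ($(B_1w,A_2w)_H\ge0$, $(B_2w,A_1w)_H\ge0$, symmetry), (A8) ($(\Phi_\lambda w, B_1w)_H\ge0$, $(\Phi_\lambda w, A_2 w)_H\ge 0$), (A7), (A11), and (A5); this is the routine but delicate part. With these bounds, pass to the limit $\lambda\to0^+$: by the compact embedding $V\hookrightarrow\hookrightarrow H$ extract $\varphi_\lambda\to\varphi_{n+1}$ strongly in $H$, use demiclosedness of the maximal monotone operator $\Phi$ and weak convergences to identify the limit of $\Phi_\lambda\varphi_\lambda$ with $\Phi\varphi_{n+1}$, and conclude that $\varphi_{n+1}\in V$ solves the equation in $V^*$. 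Then (A10) upgrades this to $\varphi_{n+1}\in D(B_1)\cap D(A_2)$ with the equation holding in $H$, which is precisely \ref{Pn}. Finally, $\theta_{n+1}$ is recovered from the affine formula above, closing the induction and giving the claimed regularity. The threshold $h_0\in(0,1)$ is chosen once and for all so that the coercivity constant of the elliptic operator stays positive uniformly in $n$ — concretely, small enough that the terms $\frac{\eta}{h}B_2(I+hA_1)^{-1}$ and ${\cal L}$, when estimated via (A5), (A11) and interpolation between $H$ and $V$, are absorbed by the coercive part coming from (A2)–(A3); I expect choosing this $h_0$ and carrying the three test-function estimates through the $\lambda$-limit to be the main technical obstacle, since the coupling term $B_2(I+hA_1)^{-1}\varphi_{n+1}$ mixes the two equations and must be handled carefully to preserve all the monotonicity structure.
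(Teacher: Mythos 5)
Your proposal follows essentially the same route as the paper: eliminate $\theta_{n+1}=(I+hA_{1})^{-1}(\theta_{n}+\eta(\varphi_{n}-\varphi_{n+1}))$ to reduce \ref{Pn} to a single elliptic equation for $\varphi_{n+1}$, then solve that equation by Yosida regularization of $\Phi$, Browder--Minty surjectivity in $V^{*}$, the regularity hypothesis (A10), $\lambda$-independent a priori estimates, and passage to the limit $\lambda\to0^{+}$ (this is the paper's Lemma \ref{elliptic2}). The one slip is the coefficient of the coupling term: after elimination it is $\eta B_{2}(I+hA_{1})^{-1}\varphi_{n+1}$, not $\tfrac{\eta}{h}B_{2}(I+hA_{1})^{-1}\varphi_{n+1}$ (i.e.\ $\eta h^{2}$ after multiplying through by $h^{2}$), and it is precisely this smaller coefficient that makes the perturbation $O(h)$ in operator norm via (A5) and hence absorbable by the coercivity of $L$ when choosing $h_{0}$.
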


\begin{thm}\label{maintheorem2}
Assume that {\rm (A1)-(A12)} hold.   
Then there exists a unique solution $(\theta, \varphi)$  
of {\rm \ref{P}}. 
\end{thm}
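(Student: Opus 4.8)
The plan is to obtain the continuous solution of \ref{P} as a limit of the discrete solutions furnished by Theorem~\ref{maintheorem1}, following the standard time-discretization strategy of \cite{CF1996, CK, K4}. First I would derive, uniformly in $h \in (0, h_0)$, the a~priori estimates matching the regularity classes in the Definition: testing the first equation of \ref{Pn} by $A_1\theta_{n+1}$ and the second by $z_{n+1} = \delta_h v_n$, then summing over $n$ and using a discrete Gronwall argument, one controls $\|\overline{\theta}_h\|_{L^\infty(0,T;D(A_1))}$, $\|\widehat\theta_h\|_{H^1(0,T;V)}$, $\|\widehat v_h\|_{W^{1,\infty}(0,T;H)}$, $\|\overline\varphi_h\|_{W^{1,\infty}(0,T;V)}$, $\|\overline v_h\|_{L^2(0,T;D(B_1))}$, $\|\overline\varphi_h\|_{L^2(0,T;D(A_2))}$ and $\|\Phi\overline\varphi_h\|_{L^\infty(0,T;H)}$. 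Here (A4)'s sign conditions $(B_1 w, A_2 w)_H \ge 0$, $(B_2 w, A_1 w)_H \ge 0$ and (A8)'s positivity $(\Phi_\lambda w, A_2 w)_H \ge 0$ (passed to $\Phi$ in the limit), together with (A7) to handle the $\Phi$-term and (A5), (A11) to absorb the $B_2\theta$ and $\mathcal L\varphi$ terms, are exactly what makes the estimates close; the remark identities \eqref{rem1}-\eqref{rem6} translate bounds on the step functions into bounds on the piecewise-linear interpolants.

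Next I would extract weakly/weakly-$*$ convergent subsequences: $\widehat\theta_h \rightharpoonup \theta$ in $H^1(0,T;V)$ and weakly-$*$ in $L^\infty(0,T;D(A_1))$, $\widehat\varphi_h \rightharpoonup \varphi$ in $W^{1,\infty}(0,T;V)$, $\widehat v_h \rightharpoonup \frac{d\varphi}{dt}$ in $W^{1,\infty}(0,T;H)$ with $\frac{d\varphi}{dt} \in L^2(0,T;D(B_1))$, $\overline\varphi_h \rightharpoonup \varphi$ in $L^2(0,T;D(A_2))$, and $\Phi\overline\varphi_h \rightharpoonup \xi$ in $L^\infty(0,T;H)$. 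The estimates \eqref{rem4}-\eqref{rem6} show $\overline\varphi_h - \widehat\varphi_h \to 0$, $\overline v_h - \widehat v_h \to 0$, $\overline\theta_h - \widehat\theta_h \to 0$ in the appropriate norms, so the barred and hatted functions share the same limits. Using the compact embedding $V \subset H$ from (A1) and an Aubin--Lions--Simon argument, I would upgrade to strong convergence $\overline\varphi_h \to \varphi$ in $L^2(0,T;H)$ (indeed in $C([0,T];H)$ type spaces), which is what is needed to identify the nonlinear terms: $\mathcal L\overline\varphi_h \to \mathcal L\varphi$ by Lipschitz continuity (A11), and $\xi = \Phi\varphi$ by maximal monotonicity of $\Phi$ (A6) combined with the strong-weak closedness of its graph (the standard $\limsup$ inequality, using (A7) if needed). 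Passing to the limit in \ref{Ph} then yields \eqref{df1}-\eqref{df2}; the initial conditions \eqref{df3} follow from the convergence of the interpolants at $t=0$ and the continuity in time provided by the regularity.

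For uniqueness I would take two solutions $(\theta_1,\varphi_1)$, $(\theta_2,\varphi_2)$, write the system for the differences $\theta := \theta_1 - \theta_2$, $\varphi := \varphi_1 - \varphi_2$, test the first difference equation by $\theta$ and the second by $\frac{d\varphi}{dt}$, and add. Monotonicity of $A_1$, $A_2$, $B_1$ and $\Phi$ gives nonnegative contributions, the $\eta\frac{d\varphi}{dt}$ and $B_2\theta$ cross terms are controlled by (A5) and Young's inequality (using the $A_1\theta$ term gained on the left), and $\mathcal L$ contributes a term bounded by $C_{\mathcal L}\|\varphi\|_H \|\frac{d\varphi}{dt}\|_H$; a Gronwall inequality in the energy $\|\theta\|_H^2 + (L\frac{d\varphi}{dt},\frac{d\varphi}{dt})_H + \langle A_2^*\varphi,\varphi\rangle_{V^*,V}$ (coercive by (A2) and (A3)) then forces $\theta \equiv 0$, $\varphi \equiv 0$.

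I expect the main obstacle to be the identification of the limit of the nonlinear term $\Phi\overline\varphi_h$ together with securing the strong convergence that this identification requires: the growth condition (A6) only gives $V$-to-$H$ continuity with polynomial weights, so one must carefully combine the uniform $L^2(0,T;D(A_2)) \cap W^{1,\infty}(0,T;V)$ bound, the compactness from (A1), and the monotone-operator limiting argument, while simultaneously checking that the regularity $\frac{d\varphi}{dt} \in L^2(0,T;D(B_1))$ and $\Phi\varphi \in L^\infty(0,T;H)$ genuinely survives the passage to the limit (via weak lower semicontinuity of norms and closedness of the maximal monotone graphs of $B_1$ and $\Phi$). A secondary technical point is making the discrete energy estimates fully rigorous for small $h$, i.e.\ choosing $h_0$ so that the terms produced by $\mathcal L$ and $B_2\theta$ are absorbed and the discrete Gronwall constant stays bounded independently of $N$.
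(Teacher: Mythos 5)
Your existence argument follows the paper's route essentially step by step (discrete energy estimates, difference-quotient estimates, weak/weak-$*$ compactness, Aubin--Lions--Simon, and the maximal-monotonicity identification of $\Phi\varphi$), and in particular you correctly flag that the sign conditions in (A4) and the substitution of the first equation are what close the estimates for the $B_{2}\theta$ coupling. That part is sound.

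The uniqueness part, however, has a genuine gap in the treatment of the coupling term. Writing $\widetilde\theta=\theta_{1}-\theta_{2}$, $\widetilde\varphi=\varphi_{1}-\varphi_{2}$ and testing the second difference equation by $\frac{d\widetilde\varphi}{dt}$ produces the term $(B_{2}\widetilde\theta,\frac{d\widetilde\varphi}{dt})_{H}$, which you propose to control ``by (A5) and Young's inequality, using the $A_{1}\theta$ term gained on the left.'' But testing the first difference equation by $\widetilde\theta$ only gains $(A_{1}\widetilde\theta,\widetilde\theta)_{H}$, i.e.\ (via (A3)) a bound on $\|\widetilde\theta\|_{V}^{2}$, whereas (A5) estimates $\|B_{2}\widetilde\theta\|_{H}$ by $\|A_{1}\widetilde\theta\|_{H}+\|\widetilde\theta\|_{H}$; the full graph norm $\|A_{1}\widetilde\theta\|_{H}$ is not controlled by your energy, and in the concrete examples ($B_{2}=-c^{2}\Delta$ in \ref{P1}--\ref{P3}) one cannot bound $\|B_{2}\widetilde\theta\|_{H}$ by $\|\widetilde\theta\|_{V}$, so the Young absorption does not close. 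The paper's device is different: it substitutes $\frac{d\widetilde\varphi}{dt}=\frac{1}{\eta}\bigl(-\frac{d\widetilde\theta}{dt}-A_{1}\widetilde\theta\bigr)$ from \eqref{df1} into this pairing, obtaining
\begin{align*}
\Bigl(B_{2}\widetilde\theta,\frac{d\widetilde\varphi}{dt}\Bigr)_{H}
= -\frac{1}{2\eta}\frac{d}{dt}\bigl\|B_{2}^{1/2}\widetilde\theta\bigr\|_{H}^{2}
-\frac{1}{\eta}\bigl(B_{2}\widetilde\theta, A_{1}\widetilde\theta\bigr)_{H},
\end{align*}
where the last term is nonnegative by (A4) and the first term adds $\|B_{2}^{1/2}\widetilde\theta\|_{H}^{2}$ to the Gronwall energy; no use of (A5) or of the $A_{1}$ term from the $\widetilde\theta$-test is needed at this stage ($\widetilde\theta=0$ is deduced afterwards, once $\widetilde\varphi\equiv 0$, from \eqref{df1} alone). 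A secondary inaccuracy: monotonicity of $\Phi$ does not make $(\Phi\varphi_{1}-\Phi\varphi_{2},\frac{d\widetilde\varphi}{dt})_{H}$ nonnegative, since the test function is $\frac{d\widetilde\varphi}{dt}$ rather than $\widetilde\varphi$; this term must instead be estimated through the local Lipschitz bound in (A6) together with the uniform $V$-bounds on the solutions, as the paper does in \eqref{cocoro1}.
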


\begin{thm}\label{erroresti} 
Let $h_{0}$ be as in Theorem \ref{maintheorem1}. 
Assume that {\rm (A1)-(A12)} hold. 
Then there exist constants $h_{00} \in (0, h_{0})$ and 
$M=M(T)>0$ such that 
\begin{align*}
&\|L^{1/2}(\widehat{v}_{h} - v)\|_{L^{\infty}(0, T; H)} 
+ \|B_{1}^{1/2}(\overline{v}_{h}-v)\|_{L^2(0, T; H)} 
+ \|\widehat{\varphi}_{h} - \varphi\|_{L^{\infty}(0, T; V)} 
\\ 
&+ \|\widehat{\theta}_{h} - \theta\|_{L^{\infty}(0, T; H)} 
+ \|\overline{\theta}_{h} - \theta\|_{L^2(0, T; V)} 
\\
&+ \|B_{2}^{1/2}(\widehat{\theta}_{h} - \theta)\|_{L^{\infty}(0, T; H)} 
+ \int_{0}^{T} (B_{2}(\overline{\theta}_{h}(t)-\theta(t)), 
                                A_{1}(\overline{\theta}_{h}(t)-\theta(t)))_{H}\,dt 
\leq M h^{1/2}
\end{align*}
for all $h \in (0, h_{00})$, 
where $v = \frac{d\varphi}{dt}$. 
\end{thm}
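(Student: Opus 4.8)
The plan is to derive the error estimate by subtracting the continuous problem \ref{P} (written in the time-averaged form, exploiting the identities in the Remark) from the discrete problem \ref{Ph}, and then running a discrete energy argument analogous to the a priori estimates that must already underlie Theorems \ref{maintheorem1} and \ref{maintheorem2}. Writing $e_\theta := \widehat\theta_h - \theta$, $e_\varphi := \widehat\varphi_h - \varphi$, $e_v := \widehat v_h - v$, the difference of the first equations gives $\frac{d e_\theta}{dt} + \eta \frac{d e_\varphi}{dt} + A_1(\overline\theta_h - \theta) = 0$, and the difference of the second gives an equation for $L\overline z_h - L\frac{d^2\varphi}{dt^2}$ plus the monotone terms $B_1(\overline v_h - v)$, $A_2(\overline\varphi_h - \varphi)$, $\Phi\overline\varphi_h - \Phi\varphi$, ${\cal L}\overline\varphi_h - {\cal L}\varphi$, and the coupling $B_2(\overline\theta_h - \theta)$. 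The consistency errors coming from replacing $\theta$, $\varphi$, $v$ by their piecewise-constant interpolants $\overline\theta_h$, $\overline\varphi_h$, $\overline v_h$, and the finite-difference quotients $\frac{d\widehat\theta_h}{dt}$ etc.\ by the time derivatives, are $O(h)$ in the appropriate norms because of the regularity in Definition 1.4 (e.g.\ $\|\overline\theta_h - \widehat\theta_h\|_{L^2(0,T;V)}^2 = \frac{h^2}{3}\|\frac{d\widehat\theta_h}{dt}\|_{L^2(0,T;V)}^2$ and similar bounds, together with $\varphi \in W^{2,\infty}$, $\theta \in H^1(0,T;V)$).

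The core step is to test the first error equation with $\overline\theta_h - \theta$ (or with $B_2(\overline\theta_h - \theta)$, to produce the last two terms in the statement), test the second error equation with $\overline v_h - v = \frac{d\widehat\varphi_h}{dt} - \frac{d\varphi}{dt}$ (more precisely with the discrete analogue that makes the $L z$-term telescope into $\frac12\frac{d}{dt}\|L^{1/2} e_v\|_H^2$), and add. The monotonicity of $B_1$, $A_2$, $\Phi_\lambda$ (via (A3), (A4), (A7), (A8)) makes their contributions either nonnegative or controllable, the coupling terms $\eta\frac{de_\varphi}{dt}$ against $A_1 e_\theta$ pair with $B_2 e_\theta$ against $e_v$ so as to cancel the dangerous cross-term (this is exactly where (A4)'s sign condition $(B_2 w, A_1 w)_H \ge 0$ and the structural compatibility with $\eta$ enter), the Lipschitz term ${\cal L}$ and the local-Lipschitz bound on $\Phi$ in (A6) are absorbed using the already-established $L^\infty(0,T;V)$ bounds on $\overline\varphi_h$ and $\varphi$, and the consistency errors are moved to the right-hand side and bounded by $C h + \frac12(\text{energy terms})$. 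Then a discrete Gronwall lemma yields the $L^\infty$-in-time control of $\|L^{1/2} e_v\|_H^2$, $\|e_\varphi\|_V^2$, $\|e_\theta\|_H^2$, $\|B_2^{1/2} e_\theta\|_H^2$, plus the integrated (dissipative) quantities $\|B_1^{1/2} e_v\|_{L^2}^2$, $\|e_\theta\|_{L^2(0,T;V)}^2$ (this last via (A3)'s coercivity $\langle A_1^* w, w\rangle + \alpha\|w\|_H^2 \ge \omega\|w\|_V^2$) and $\int_0^T (B_2 e_\theta, A_1 e_\theta)_H\,dt$, all bounded by $C h$; taking square roots gives the $h^{1/2}$ rate and fixes $h_{00}$ small enough that the absorbed terms do not swamp the left side.

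The main obstacle I anticipate is handling the coupling between the parabolic equation for $\theta$ and the hyperbolic equation for $\varphi$ without losing a power of $h$: the natural test function for the $\varphi$-equation is $\overline v_h - v$, but this is only piecewise constant and does not exactly equal $\frac{d}{dt}(\widehat\varphi_h - \varphi)$, so the telescoping of $L\overline z_h$ and of $A_2\overline\varphi_h$ produces extra commutator terms of the form $(\overline v_h - \widehat v_h, \cdot)$ and $(\overline\varphi_h - \widehat\varphi_h, \cdot)$ that must be shown $O(h)$ in $L^2$ using \eqref{rem4}, \eqref{rem5} and the uniform discrete bounds — and crucially, the cross terms between the two equations must be arranged (using $\eta$, (A4), (A5)) so their $O(h)$ remainders do not require a bound on $\overline z_h$ in a norm stronger than what the discrete a priori estimate provides. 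A secondary technical point is that $\Phi$ appears in \ref{Ph} itself (not its Yosida approximation), so one must first pass through the $\lambda$-regularized discrete scheme, use (A8) and (A10) for the estimates, and then let $\lambda \to 0$; I would structure the proof so that the energy inequality is obtained uniformly in $\lambda$ before taking that limit. Once these book-keeping issues are dispatched, the remaining computations are the routine Gronwall-and-Young's-inequality manipulations.
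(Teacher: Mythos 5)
Your proposal matches the paper's argument in all essentials: test the difference of the second equations with $\overline{v}_{h}-v$, telescope the $L\overline{z}_{h}$ and $A_{2}$ terms into $\frac{d}{dt}\|L^{1/2}(\widehat{v}_{h}-v)\|_{H}^2$ and $\frac{d}{dt}\|A_{2}^{1/2}(\widehat{\varphi}_{h}-\varphi)\|_{H}^2$ modulo commutators of the form $(\,\cdot\,,\widehat{v}_{h}-\overline{v}_{h})$ and $(\,\cdot\,,\overline{\varphi}_{h}-\widehat{\varphi}_{h})$ controlled by \eqref{rem4}--\eqref{rem6}, eliminate the coupling term $(B_{2}(\overline{\theta}_{h}-\theta),\overline{v}_{h}-v)_{H}$ via the first equation so that it produces $-\frac{1}{2\eta}\frac{d}{dt}\|B_{2}^{1/2}(\widehat{\theta}_{h}-\theta)\|_{H}^2$ and the nonnegative $(B_{2}\cdot,A_{1}\cdot)_{H}$ term of (A4), add the separately tested $\theta$-error equation for the $L^2(0,T;V)$ bound via (A3), and conclude by Gronwall. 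The only superfluous element is the proposed detour through the Yosida-regularized scheme: since the discrete solution already satisfies \ref{Ph} with $\Phi$ itself and $\Phi\overline{\varphi}_{h}-\Phi\varphi$ is absorbed directly by the local Lipschitz bound (A6) together with the uniform $L^{\infty}(0,T;V)$ estimates, no $\lambda$-limit is needed at this stage.
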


This paper is organized as follows. 
In Section \ref{Sec2} we give  
the linearized equations of coupled sound and heat flow 
and some parabolic-hyperbolic phase-field systems as examples. 
In Section \ref{Sec3} we derive existence of solutions to \ref{Pn}. 
In Section \ref{Sec4} we prove that there exists a solution of \ref{P}. 
In Section \ref{Sec5} we establish uniqueness for \ref{P}.  
In Section \ref{Sec6} we obtain error estimates 
between solutions of \ref{P} and solutions of \ref{Ph}.  

\vspace{10pt}

\section{Examples}\label{Sec2}
In this section we give the following examples. 
%
%
%

\begin{example}
We can verify that the problem 
\begin{equation*}\tag*{(P1)}\label{P1}
     \begin{cases}
         \theta_{t} + (\gamma-1)\varphi_{t} - \sigma\Delta\theta = 0      
         & \mbox{in}\ \Omega\times(0, T), 
 \\[1mm]
        \varphi_{tt}  - c^2 \Delta\varphi 
        -m^2 \varphi = -c^2 \Delta\theta 
         & \mbox{in}\ \Omega\times(0, T), 
 \\[1mm] 
         \theta = \varphi = 0 
         & \mbox{on}\ \partial\Omega\times(0, T),  
 \\[1mm]
         \theta(0) = \theta_{0},\ \varphi(0) = \varphi_{0},\ \varphi_{t}(0) = v_{0}  
         &\mbox{in}\ \Omega                                      
     \end{cases}
\end{equation*}
is an example, 
where $c>0$, $\sigma>0$, $m \in \mathbb{R}$, $\gamma>1$, $T>0$ are constants 
and  
$\Omega \subset \mathbb{R}^{3}$ 
is a bounded domain with smooth boundary $\partial\Omega$, 
under the case that 
\[
\theta_{0} \in H^2(\Omega) \cap H_{0}^{1}(\Omega),   
-\Delta\theta_{0} \in H_{0}^{1}(\Omega), 
\varphi_{0} \in H^2(\Omega) \cap H_{0}^{1}(\Omega),  
v_{0} \in H_{0}^{1}(\Omega).   
\]
Indeed, putting 
\begin{align*}
&V:=H_{0}^1(\Omega),\ H:=L^2(\Omega), 
\\ 
&L := I : H \to H, 
\\ 
&A_{1} := - \sigma\Delta : D(A_{1}):=H^2(\Omega) \cap H_{0}^{1}(\Omega)\subset H \to H, 
\\ 
&B_{1} := 0 : D(B_{1}):= H \to H, 
\\ 
&A_{2} := - c^2 \Delta : D(A_{2}):=H^2(\Omega) \cap H_{0}^{1}(\Omega)\subset H \to H, 
\\ 
&B_{2} := -c^2 \Delta : D(B_{2}):=H^2(\Omega) \cap H_{0}^{1}(\Omega)\subset H \to H   
\end{align*}
and defining the operators 
$A_{1}^{*} : V \to V^{*}$, $B_{1}^{*} : V \to V^{*}$, 
$A_{2}^{*} : V \to V^{*}$, $\Phi : D(\Phi) \subset H \to H$, 
${\cal L}: H \to H$, 
$B_{2}^{*} : V \to V^{*}$ 
as 
\begin{align*}
&\langle A_{1}^{*}w, z \rangle_{V^{*}, V} 
:= \sigma\int_{\Omega} \nabla w \cdot \nabla z 
\quad \mbox{for}\ w, z \in V, 
\\[1mm] 
&\langle B_{1}^{*}w, z \rangle_{V^{*}, V} 
:= 0
\quad \mbox{for}\ w, z \in V, 
\\[1mm] 
&\langle A_{2}^{*}w, z \rangle_{V^{*}, V} 
:= c^2 \int_{\Omega} \nabla w \cdot \nabla z 
\quad \mbox{for}\ w, z \in V,  
\\[1mm]
&\Phi z := 0 \quad \mbox{for}\ 
z \in D(\Phi) := H,  
\\[1mm]
&{\cal L}z := -m^2 z \quad \mbox{for}\ z \in H,  
\\[1mm]
&\langle B_{2}^{*}w, z \rangle_{V^{*}, V} 
:= c^2 \int_{\Omega} \nabla w \cdot \nabla z 
\quad \mbox{for}\ w, z \in V,  
\end{align*}
we can check that (A1)--(A12) hold. 
Similarly, we can confirm that 
the homogeneous  Neumann--Neumann problem  
is an example.  
\end{example}

\begin{example}
We see that the problem 
\begin{equation*}\tag*{(P2)}\label{P2}
     \begin{cases}
         \theta_{t} + (\gamma-1)\varphi_{t} - \sigma\Delta\theta = 0     
         & \mbox{in}\ \Omega\times(0, T), 
 \\[1mm]
        \varphi_{tt} + \ep\varphi_{t} - c^2 \Delta\varphi 
        + \beta(\varphi) + \pi(\varphi) = -c^2 \Delta\theta 
         & \mbox{in}\ \Omega\times(0, T), 
 \\[1mm] 
         \theta = \varphi = 0 
         & \mbox{on}\ \partial\Omega\times(0, T),  
 \\[1mm]
         \theta(0) = \theta_{0},\ \varphi(0) = \varphi_{0},\ \varphi_{t}(0) = v_{0}  
         &\mbox{in}\ \Omega                                      
     \end{cases}
\end{equation*}
is an example, 
where $c>0$, $\sigma>0$, $\ep\geq0$, $\gamma>1$, $T>0$ are constants 
and  
$\Omega \subset \mathbb{R}^{3}$ 
is a bounded domain with smooth boundary $\partial\Omega$, 
under the following conditions: 
\begin{enumerate} 
\setlength{\itemsep}{2mm}
\item[(H1)] 
$\beta : \mathbb{R} \to \mathbb{R}$                                
is a single-valued maximal monotone function and 
there exists a proper differentiable (lower semicontinuous) convex function 
$\widehat{\beta} : \mathbb{R} \to [0, +\infty)$ such that 
$\widehat{\beta}(0) = 0$ and 
$\beta(r) = \widehat{\beta}\,'(r) = \partial\widehat{\beta}(r)$ 
for all $r \in \mathbb{R}$, 
where $\widehat{\beta}\,'$ and $\partial\widehat{\beta}$, respectively, 
are the differential and subdifferential of $\widehat{\beta}$.
\item[(H2)] 
$\beta \in C^2(\mathbb{R})$. 
Moreover, there exists a constant $C_{\beta} > 0$ such that 
$|\beta''(r)| \leq C_{\beta}(1+|r|)$ for all $r \in \mathbb{R}$. 
\item[(H3)] 
$\pi : \mathbb{R} \to \mathbb{R}$ is a Lipschitz continuous function. 
\item[(H4)] 
$\theta_{0} \in H^2(\Omega) \cap H_{0}^{1}(\Omega)$,   
$-\Delta\theta_{0} \in H_{0}^{1}(\Omega)$, 
$\varphi_{0} \in H^2(\Omega) \cap H_{0}^{1}(\Omega)$,  
$v_{0} \in H_{0}^{1}(\Omega)$.   
\end{enumerate}
Indeed, putting 
\begin{align*}
&V:=H_{0}^1(\Omega),\ H:=L^2(\Omega), 
\\ 
&L := I : H \to H, 
\\ 
&A_{1} := - \sigma\Delta : D(A_{1}):=H^2(\Omega) \cap H_{0}^{1}(\Omega)\subset H \to H, 
\\ 
&B_{1} := \ep I : D(B_{1}):= H \to H, 
\\ 
&A_{2} := - c^2 \Delta : D(A_{2}):=H^2(\Omega) \cap H_{0}^{1}(\Omega)\subset H \to H, 
\\ 
&B_{2} := -c^2 \Delta : D(B_{2}):=H^2(\Omega) \cap H_{0}^{1}(\Omega)\subset H \to H   
\end{align*}
and defining the operators 
$A_{1}^{*} : V \to V^{*}$, $B_{1}^{*} : V \to V^{*}$, 
$A_{2}^{*} : V \to V^{*}$, $\Phi : D(\Phi) \subset H \to H$, 
${\cal L}: H \to H$, 
$B_{2}^{*} : V \to V^{*}$ 
as 
\begin{align*}
&\langle A_{1}^{*}w, z \rangle_{V^{*}, V} 
:= \sigma\int_{\Omega} \nabla w \cdot \nabla z 
\quad \mbox{for}\ w, z \in V, 
\\[1mm] 
&\langle B_{1}^{*}w, z \rangle_{V^{*}, V} 
:= \ep(w, z)_{H} 
\quad \mbox{for}\ w, z \in V, 
\\[1mm] 
&\langle A_{2}^{*}w, z \rangle_{V^{*}, V} 
:= c^2 \int_{\Omega} \nabla w \cdot \nabla z 
\quad \mbox{for}\ w, z \in V,  
\\[1mm]
&\Phi z := \beta(z) \quad \mbox{for}\ 
z \in D(\Phi) := \{z \in H\ |\ \beta(z) \in H \}, 
\\[1mm]
&{\cal L}z := \pi(z) \quad \mbox{for}\ z \in H,  
\\[1mm]
&\langle B_{2}^{*}w, z \rangle_{V^{*}, V} 
:= c^2 \int_{\Omega} \nabla w \cdot \nabla z 
\quad \mbox{for}\ w, z \in V,  
\end{align*}
we can confirm that (A1)--(A12) hold 
in reference to \cite{K4}. 
Similarly, we can verify that 
the homogeneous  Neumann--Neumann problem  
is an example.  
\end{example}

\begin{example}
We see that the problem 
\begin{equation*}\tag*{(P3)}\label{P3}
     \begin{cases}
         \theta_{t} + (\gamma-1)\varphi_{t} - \sigma\Delta\theta = 0     
         & \mbox{in}\ \Omega\times(0, T), 
 \\[1mm]
        \varphi_{tt} -\ep\Delta\varphi_{t} - c^2 \Delta\varphi 
        + \beta(\varphi) + \pi(\varphi) = -c^2 \Delta\theta 
         & \mbox{in}\ \Omega\times(0, T), 
 \\[1mm] 
         \theta = \varphi = 0 
         & \mbox{on}\ \partial\Omega\times(0, T),  
 \\[1mm]
         \theta(0) = \theta_{0},\ \varphi(0) = \varphi_{0},\ \varphi_{t}(0) = v_{0}  
         &\mbox{in}\ \Omega                                      
     \end{cases}
\end{equation*}
is an example, 
where $c>0$, $\sigma>0$, $\ep\geq0$, $\gamma>1$, $T>0$ are constants 
and  
$\Omega \subset \mathbb{R}^{3}$ 
is a bounded domain with smooth boundary $\partial\Omega$, 
under the three conditions (H1)-(H3) 
and the condition 
\begin{enumerate} 
\item[(H5)] 
$\theta_{0} \in H^2(\Omega) \cap H_{0}^{1}(\Omega)$,   
$-\Delta\theta_{0} \in H_{0}^{1}(\Omega)$, 
$\varphi_{0} \in H^2(\Omega) \cap H_{0}^{1}(\Omega)$,  
$v_{0} \in H^2(\Omega) \cap H_{0}^{1}(\Omega)$.   
\end{enumerate}
Indeed, putting 
\begin{align*}
&V:=H_{0}^1(\Omega),\ H:=L^2(\Omega), 
\\ 
&L := I : H \to H, 
\\ 
&A_{1} := - \sigma\Delta : D(A_{1}):=H^2(\Omega) \cap H_{0}^{1}(\Omega)\subset H \to H, 
\\ 
&B_{1} := -\ep\Delta : D(B_{1}):= H^2(\Omega) \cap H_{0}^{1}(\Omega)\subset H \to H, 
\\ 
&A_{2} := - c^2 \Delta : D(A_{2}):=H^2(\Omega) \cap H_{0}^{1}(\Omega)\subset H \to H, 
\\ 
&B_{2} := -c^2 \Delta : D(B_{2}):=H^2(\Omega) \cap H_{0}^{1}(\Omega)\subset H \to H   
\end{align*}
and defining the operators 
$A_{1}^{*} : V \to V^{*}$, $B_{1}^{*} : V \to V^{*}$, 
$A_{2}^{*} : V \to V^{*}$, $\Phi : D(\Phi) \subset H \to H$, 
${\cal L}: H \to H$, 
$B_{2}^{*} : V \to V^{*}$ 
as 
\begin{align*}
&\langle A_{1}^{*}w, z \rangle_{V^{*}, V} 
:= \sigma\int_{\Omega} \nabla w \cdot \nabla z 
\quad \mbox{for}\ w, z \in V, 
\\[1mm] 
&\langle B_{1}^{*}w, z \rangle_{V^{*}, V} 
:= \ep\int_{\Omega} \nabla w \cdot \nabla z 
\quad \mbox{for}\ w, z \in V, 
\\[1mm] 
&\langle A_{2}^{*}w, z \rangle_{V^{*}, V} 
:= c^2 \int_{\Omega} \nabla w \cdot \nabla z 
\quad \mbox{for}\ w, z \in V,  
\\[1mm]
&\Phi z := \beta(z) \quad \mbox{for}\ 
z \in D(\Phi) := \{z \in H\ |\ \beta(z) \in H \}, 
\\[1mm]
&{\cal L}z := \pi(z) \quad \mbox{for}\ z \in H,  
\\[1mm]
&\langle B_{2}^{*}w, z \rangle_{V^{*}, V} 
:= c^2 \int_{\Omega} \nabla w \cdot \nabla z 
\quad \mbox{for}\ w, z \in V,  
\end{align*}
we can verify that (A1)--(A12) hold 
in reference to \cite{K4}. 
Similarly, we can check that 
the homogeneous  Neumann--Neumann problem  
is an example.  
\end{example}

\begin{example}
The problem 
\begin{equation*}\tag*{(P4)}\label{P4}
     \begin{cases}
         \theta_{t} + \varphi_{t} - \Delta\theta = 0     
         & \mbox{in}\ \Omega\times(0, T), 
 \\[1mm]
        \varphi_{tt} + \varphi_{t} - \Delta\varphi 
        + \beta(\varphi) + \pi(\varphi) = \theta 
         & \mbox{in}\ \Omega\times(0, T), 
 \\[1mm] 
         \theta = \varphi = 0 
         & \mbox{on}\ \partial\Omega\times(0, T),  
 \\[1mm]
         \theta(0) = \theta_{0},\ \varphi(0) = \varphi_{0},\ \varphi_{t}(0) = v_{0}  
         
         &\mbox{in}\ \Omega                                      
     \end{cases}
\end{equation*}
is an example, 
where $\Omega \subset \mathbb{R}^3$ is a bounded domain 
with smooth boundary $\partial\Omega$, $T>0$, 
under the four conditions (H1)-(H4).  
Indeed, putting 
\begin{align*}
&V:=H_{0}^1(\Omega),\ H:=L^2(\Omega), 
\\ 
&L := I : H \to H, 
\\ 
&A_{1} := - \Delta : D(A_{1}):=H^2(\Omega) \cap H_{0}^{1}(\Omega)\subset H \to H, 
\\ 
&B_{1} := I : D(B_{1}):=H \to H, 
\\ 
&A_{2} := - \Delta : D(A_{2}):=H^2(\Omega) \cap H_{0}^{1}(\Omega)\subset H \to H, 
\\ 
&B_{2} := I : D(B_{2}):=H \to H   
\end{align*}
and defining the operators 
$A_{1}^{*} : V \to V^{*}$, $B_{1}^{*} : V \to V^{*}$, 
$A_{2}^{*} : V \to V^{*}$, $\Phi : D(\Phi) \subset H \to H$, 
${\cal L}: H \to H$, 
$B_{2}^{*} : V \to V^{*}$ 
as 
\begin{align*}
&\langle A_{1}^{*}w, z \rangle_{V^{*}, V} 
:= \int_{\Omega} \nabla w \cdot \nabla z 
\quad \mbox{for}\ w, z \in V, 
\\[1mm] 
&\langle B_{1}^{*}w, z \rangle_{V^{*}, V} 
:= (w, z)_{H} 
\quad \mbox{for}\ w, z \in V, 
\\[1mm] 
&\langle A_{2}^{*}w, z \rangle_{V^{*}, V} 
:= \int_{\Omega} \nabla w \cdot \nabla z 
\quad \mbox{for}\ w, z \in V,  
\\[1mm]
&\Phi z := \beta(z) \quad \mbox{for}\ 
z \in D(\Phi) := \{z \in H\ |\ \beta(z) \in H \}, 
\\[1mm]
&{\cal L}z := \pi(z) \quad \mbox{for}\ z \in H,  
\\[1mm]
&\langle B_{2}^{*}w, z \rangle_{V^{*}, V} 
:= (w, z)_{H} 
\quad \mbox{for}\ w, z \in V,  
\end{align*}
we can confirm that (A1)--(A12) hold 
in reference to \cite{K4}. 
Similarly, we can show that 
the homogeneous  Neumann--Neumann problem  
is an example.  
\end{example}
\begin{example}
The problem 
\begin{equation*}\tag*{(P5)}\label{P5}
     \begin{cases}
         \theta_{t} + \varphi_{t} - \Delta\theta = 0     
         & \mbox{in}\ \Omega\times(0, T), 
 \\[1mm]
        \varphi_{tt} - \Delta\varphi_{t} - \Delta\varphi 
        + \beta(\varphi) + \pi(\varphi) = \theta 
         & \mbox{in}\ \Omega\times(0, T), 
 \\[1mm] 
         \theta = \varphi = 0 
         & \mbox{on}\ \partial\Omega\times(0, T),  
 \\[1mm]
         \theta(0) = \theta_{0},\ \varphi(0) = \varphi_{0},\ \varphi_{t}(0) = v_{0} 
         &\mbox{in}\ \Omega                                     
     \end{cases}
\end{equation*}
is an example, 
where $\Omega \subset \mathbb{R}^3$ is a bounded domain 
with smooth boundary $\partial\Omega$, $T>0$, 
under the four conditions (H1)-(H3), (H5). 
Indeed, putting 
\begin{align*}
&V:=H_{0}^1(\Omega),\ H:=L^2(\Omega), 
\\ 
&L := I : H \to H, 
\\ 
&A_{1} := - \Delta : D(A_{1}):=H^2(\Omega) \cap H_{0}^{1}(\Omega)\subset H \to H, 
\\ 
&B_{1} := - \Delta : D(B_{1}):=H^2(\Omega) \cap H_{0}^{1}(\Omega)\subset H \to H, 
\\ 
&A_{2} := - \Delta : D(A_{2}):=H^2(\Omega) \cap H_{0}^{1}(\Omega)\subset H \to H, 
\\ 
&B_{2} := I : D(B_{2}):=H \to H   
\end{align*}
and defining the operators 
$A_{1}^{*} : V \to V^{*}$, $B_{1}^{*} : V \to V^{*}$, 
$A_{2}^{*} : V \to V^{*}$, $\Phi : D(\Phi) \subset H \to H$, 
${\cal L}: H \to H$, 
$B_{2}^{*} : V \to V^{*}$ 
as 
\begin{align*}
&\langle A_{1}^{*}w, z \rangle_{V^{*}, V} 
:= \int_{\Omega} \nabla w \cdot \nabla z 
\quad \mbox{for}\ w, z \in V, 
\\[1mm] 
&\langle B_{1}^{*}w, z \rangle_{V^{*}, V} 
:= \int_{\Omega} \nabla w \cdot \nabla z 
\quad \mbox{for}\ w, z \in V, 
\\[1mm] 
&\langle A_{2}^{*}w, z \rangle_{V^{*}, V} 
:= \int_{\Omega} \nabla w \cdot \nabla z 
\quad \mbox{for}\ w, z \in V,  
\\[1mm]
&\Phi z := \beta(z) \quad \mbox{for}\ 
z \in D(\Phi) := \{z \in H\ |\ \beta(z) \in H \}, 
\\[1mm]
&{\cal L}z := \pi(z) \quad \mbox{for}\ z \in H,  
\\[1mm]
&\langle B_{2}^{*}w, z \rangle_{V^{*}, V} 
:= (w, z)_{H} 
\quad \mbox{for}\ w, z \in V,  
\end{align*}
we can confirm that (A1)--(A12) hold 
in reference to \cite{K4}. 
Similarly, we can show that 
the homogeneous  Neumann--Neumann problem  
is an example.  
\end{example}

\vspace{10pt}

\section{Existence of discrete solutions}\label{Sec3}

In this section we will prove Theorem \ref{maintheorem1}. 

\begin{lem}\label{elliptic2}
There exists 
$h_{1} \in (0, 1)$ 
such that 
\[
0< h_1 < \widetilde{h}:= 
\Bigl(\frac{c_L}{1+C_{\cal L}+\eta C_{A_1, B_2}}
                      +\frac{\eta^2 C_{A_1, B_2}^2}{4(1+C_{\cal L}+\eta C_{A_1, B_2})}
                                                                                                        \Bigr)^{1/2}
                - \frac{\eta C_{A_1, B_2}}{2(1+C_{\cal L}+\eta C_{A_1, B_2})}
\]
and  
for all $g \in H$ and all $h \in (0, h_{1})$ 
there exists a unique solution $\varphi \in D(B_1) \cap D(A_{2})$ 
of the equation 
\[
L\varphi + hB_{1}\varphi + h^2 A_{2}\varphi 
+ h^2 \Phi\varphi + h^2 {\cal L}\varphi + \eta h^2 B_2 (I+hA_1)^{-1}\varphi = g 
\quad \mbox{in}\ H. 
\]
\end{lem}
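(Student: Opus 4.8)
The plan is to regard the equation as a stationary problem in $H$ dominated by the strongly positive term $L\varphi$ (recall $(L\varphi,\varphi)_H\ge c_L\|\varphi\|_H^2$), treating all remaining operators as perturbations with small coefficients $h$ or $h^2$; the only non-monotone ingredients, ${\cal L}$ and $B_2(I+hA_1)^{-1}$, will be absorbed into $c_L\|\varphi\|_H^2$, and keeping track of the constants in this absorption is what produces the threshold $\widetilde{h}$. Existence will be obtained by replacing $\Phi$ by its Yosida approximation $\Phi_\lambda$, solving the regularized equation in $V^*$, lifting it back to $H$ via (A10), deriving $\lambda$-independent a priori bounds, and letting $\lambda\to0^+$. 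A preliminary fact used repeatedly: since $A_1$ is linear maximal monotone with $A_10=0$, the resolvent $R_h:=(I+hA_1)^{-1}$ satisfies $R_h0=0$, $\|R_h\varphi\|_H\le\|\varphi\|_H$ and $\|A_1R_h\varphi\|_H\le h^{-1}\|\varphi\|_H$ (this follows from the identity $\varphi=R_h\varphi+hA_1R_h\varphi$ and monotonicity of $A_1$); combined with (A5) it gives $\|B_2R_h\varphi\|_H\le C_{A_1,B_2}(1+h^{-1})\|\varphi\|_H$ for all $\varphi\in H$ (note $R_h\varphi\in D(A_1)\subset D(B_2)$ by (A4)), so $B_2R_h:H\to H$ is a bounded linear operator, hence continuous.

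For existence, fix $\lambda>0$ and consider, in $V^*$,
\[
L\varphi_\lambda+hB_1^{*}\varphi_\lambda+h^2A_2^{*}\varphi_\lambda+h^2\Phi_\lambda\varphi_\lambda+h^2{\cal L}\varphi_\lambda+\eta h^2B_2R_h\varphi_\lambda=g .
\]
The left-hand side defines a bounded demicontinuous operator $V\to V^*$; the part $L+hB_1^{*}+h^2A_2^{*}+h^2\Phi_\lambda$ is monotone by (A2), (A9), (A3) and monotonicity of $\Phi_\lambda$, while $h^2{\cal L}+\eta h^2B_2R_h$ factors through the compact embedding $V\hookrightarrow H$ and is a compact operator $V\to V^*$; hence the full operator is pseudomonotone. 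Using the coercivity inequality for $A_2^{*}$ in (A3) (with a suitable $\alpha$), $(L\varphi,\varphi)_H\ge c_L\|\varphi\|_H^2$, the Lipschitz bound (A11) and the estimate for $B_2R_h$ above, one verifies coercivity on $V$ whenever $h$ is below the threshold $\widetilde{h}$ of the statement (the positive root of the quadratic in $h$ arising from completing the square), which is why the lemma requires $0<h_1<\widetilde{h}$. By the standard surjectivity theorem for bounded coercive pseudomonotone operators there exists $\varphi_\lambda\in V$ solving the displayed equation in $V^*$, and applying (A10) with $(a,b,c,d,d')=(h,h^2,h^2,h^2,\eta h^2)$ we obtain $\varphi_\lambda\in D(B_1)\cap D(A_2)$ with the equation holding in $H$.

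Next I would derive $\lambda$-uniform estimates by testing the $H$-equation successively by $\varphi_\lambda$, $A_2\varphi_\lambda$ and $B_1\varphi_\lambda$: using $(L\cdot,\cdot)_H\ge c_L\|\cdot\|_H^2$, monotonicity of $B_1$, $A_2$, $\Phi_\lambda$, the cross conditions $(B_1w,A_2w)_H\ge0$, $(\Phi_\lambda w,A_2w)_H\ge0$, $(\Phi_\lambda w,B_1w)_H\ge0$ from (A4) and (A8), boundedness of $L$ and ${\cal L}$, and the bound for $B_2R_h$, one obtains (for $h<\widetilde{h}$) a bound of $\varphi_\lambda$ in $H$, then of $A_2\varphi_\lambda$ and $B_1\varphi_\lambda$ in $H$, then of $\Phi_\lambda\varphi_\lambda$ in $H$ from the equation and of $\varphi_\lambda$ in $V$ from (A3)---all independent of $\lambda$ (but dependent on the fixed $h$, which is harmless). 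Passing to a subsequence, $\varphi_\lambda\rightharpoonup\varphi$ in $V$ and strongly in $H$ by compactness, $A_2\varphi_\lambda\rightharpoonup A_2\varphi$ and $B_1\varphi_\lambda\rightharpoonup B_1\varphi$ in $H$ by maximal monotonicity (so $\varphi\in D(B_1)\cap D(A_2)$), ${\cal L}\varphi_\lambda\to{\cal L}\varphi$ and $B_2R_h\varphi_\lambda\to B_2R_h\varphi$ in $H$ by continuity, and $\Phi_\lambda\varphi_\lambda\rightharpoonup\Phi\varphi$ in $H$ (since $\|(I+\lambda\Phi)^{-1}\varphi_\lambda-\varphi_\lambda\|_H=\lambda\|\Phi_\lambda\varphi_\lambda\|_H\to0$ and $\Phi$ is maximal monotone); this yields a solution $\varphi\in D(B_1)\cap D(A_2)$ of the equation in $H$. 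For uniqueness, subtracting two solutions, testing by their difference, and using monotonicity of $B_1$, $A_2$, $\Phi$ with (A11) and the bound for $B_2R_h$ gives $(c_L-h^2C_{\cal L}-\eta h(1+h)C_{A_1,B_2})\|\varphi_1-\varphi_2\|_H^2\le0$, forcing $\varphi_1=\varphi_2$ for $h<\widetilde{h}$; since $1+C_{\cal L}+\eta C_{A_1,B_2}\ge1$ one has $\widetilde{h}>0$, so any $h_1\in(0,\min\{1,\widetilde{h}\})$ is admissible.

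The main obstacle is the uniform-in-$\lambda$ estimate chain in $H$ for $A_2\varphi_\lambda$, $B_1\varphi_\lambda$ and $\Phi_\lambda\varphi_\lambda$: this is exactly where the structural hypotheses (A4) and (A8) are indispensable, since the tests by $A_2\varphi_\lambda$ and $B_1\varphi_\lambda$ produce sign-indefinite cross terms that only those conditions control, and one must check that the factor $h^{-1}$ introduced by the $B_2(I+hA_1)^{-1}$-term affects only the constants, not the validity of the bounds. By contrast, the existence step is routine once one commits to the $V^*$-formulation, which turns $\Phi_\lambda$, ${\cal L}$ and $B_2(I+hA_1)^{-1}$ into compact perturbations of a monotone operator, and the uniqueness step is a one-line energy estimate.
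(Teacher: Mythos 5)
Your proposal is correct and follows essentially the same route as the paper: Yosida regularization of $\Phi$, surjectivity of the regularized operator from $V$ to $V^{*}$, lifting to $H$ via (A10), $\lambda$-uniform a priori bounds in $H$ using (A2)--(A4), (A8) and the resolvent estimate $\|B_{2}(I+hA_{1})^{-1}\varphi\|_{H}\leq C_{A_{1},B_{2}}(1+h^{-1})\|\varphi\|_{H}$, passage to the limit $\lambda\to 0^{+}$ with identification of $\Phi\varphi$ by maximal monotonicity, and uniqueness by the same energy estimate. The only (harmless) deviations are that the paper proves the full operator $\Psi$ is \emph{monotone} and coercive for $h<\widetilde h$ by absorbing the ${\cal L}$- and $B_{2}(I+hA_{1})^{-1}$-terms into $c_{L}\|\cdot\|_{H}^{2}$ (which is exactly where $\widetilde h$ arises there) instead of invoking pseudomonotone surjectivity, and it runs the estimate chain in the order $\Phi_{\lambda}\varphi_{\lambda}$, $B_{1}\varphi_{\lambda}$, then $A_{2}\varphi_{\lambda}$ from the equation rather than your $A_{2}\varphi_{\lambda}$, $B_{1}\varphi_{\lambda}$, then $\Phi_{\lambda}\varphi_{\lambda}$; both orders close given (A4) and (A8).
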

\begin{proof}
We define the operator $\Psi : V \to V^{*}$ as   
\begin{align*}
\langle \Psi\varphi, w \rangle_{V^{*}, V} 
&:= (L\varphi, w)_{H} + h\langle B_{1}^{*}\varphi, w \rangle_{V^{*}, V} 
+ h^2\langle A_{2}^{*}\varphi, w \rangle_{V^{*}, V} 
+ h^2(\Phi_{\lambda}\varphi, w)_{H} 
\notag \\ 
&\,\quad+ h^2({\cal L}\varphi, w)_{H} 
+ \eta h^2 (B_2 (I+hA_1)^{-1}\varphi, w)_{H}    
\quad \mbox{for}\ \varphi, w \in V. 
\end{align*}
Then this operator $\Psi : V \to V^*$ is monotone, continuous and coercive 
for all $h \in (0, \widetilde{h})$. 
Indeed, since the condition (A5) yields that 
\begin{align}\label{A1B2}
\|B_{2}(I+hA_{1})^{-1}\varphi\|_{H} 
&\leq C_{A_{1}, B_{2}}(\|(I+hA_{1})^{-1}\varphi\|_{H} + \|A_{1}(I+hA_{1})^{-1}\varphi\|_{H})
\notag \\ 
&\leq C_{A_{1}, B_{2}}(1+h^{-1})\|\varphi\|_{H}
\end{align}
for all $\varphi \in H$, 
we derive from (A2), (A3), (A11), the monotonicity of $B_{1}^{*}$ and $\Phi_{\lambda}$, 
and \eqref{A1B2} that 
\begin{align*}
&\langle \Psi\varphi - \Psi\overline{\varphi}, 
                                             \varphi-\overline{\varphi} \rangle_{V^{*}, V} 
\notag \\ 
&= (L(\varphi-\overline{\varphi}), \varphi-\overline{\varphi})_{H} 
    + h\langle B_{1}^{*}(\varphi-\overline{\varphi}), 
                                        \varphi-\overline{\varphi} \rangle_{V^{*}, V} 
    + h^2\langle A_{2}^{*}(\varphi-\overline{\varphi}), 
                                    \varphi-\overline{\varphi} \rangle_{V^{*}, V} 
\notag \\ 
    &\,\quad+ h^2(\Phi_{\lambda}\varphi - \Phi_{\lambda}\overline{\varphi}, 
                                                                      \varphi-\overline{\varphi})_{H} 
    + h^2({\cal L}\varphi - {\cal L}\overline{\varphi}, \varphi-\overline{\varphi})_{H} 
\notag \\ 
&\,\quad + \eta h^2 (B_2 (I+hA_1)^{-1}(\varphi-\overline{\varphi}), 
                                                             (\varphi-\overline{\varphi}))_{H}
\notag \\ 
&\geq c_{L}\|\varphi-\overline{\varphi}\|_{H}^2 
         + \omega_{2, 1}h^2\|\varphi-\overline{\varphi}\|_{V}^2 
         - h^2\|\varphi-\overline{\varphi}\|_{H}^2 
         - C_{{\cal L}}h^2\|\varphi-\overline{\varphi}\|_{H}^2 
\notag \\
&\,\quad- \eta C_{A_1, B_2}(h + h^2) \|\varphi-\overline{\varphi}\|_{H}^2 
\notag \\ 
&\geq \omega_{2, 1}h^2\|\varphi-\overline{\varphi}\|_{V}^2 
\end{align*}
for all $\varphi, \overline{\varphi} \in V$ and all $h \in (0, \widetilde{h})$. 
It follows from the boundedness of the operators 
$L : H \to H$, $B_{1}^* : V \to V^*$, $A_{2}^* : V \to V^*$, 
the Lipschitz continuity of $\Phi_{\lambda} : H \to H$, 
the condition (A11), \eqref{A1B2} 
and the continuity of the embedding $V \hookrightarrow H$ 
that 
there exists a constant $C_{1}=C_{1}(\lambda)>0$ such that  
\begin{align*}
&|\langle \Psi\varphi - \Psi\overline{\varphi}, w \rangle_{V^{*}, V}| 
\notag \\ 
&\leq |(L(\varphi-\overline{\varphi}), w)_{H}| 
    + h|\langle B_{1}^{*}(\varphi-\overline{\varphi}), w \rangle_{V^{*}, V}| 
    + h^2|\langle A_{2}^{*}(\varphi-\overline{\varphi}), w \rangle_{V^{*}, V}| 
\notag \\ 
    &\,\quad+ h^2|(\Phi_{\lambda}\varphi - \Phi_{\lambda}\overline{\varphi}, w)_{H}| 
    + h^2|({\cal L}\varphi - {\cal L}\overline{\varphi}, w)_{H}| 
\notag \\
&\,\quad+ \eta h^2 |(B_2 (I+hA_1)^{-1}(\varphi-\overline{\varphi}), w)_{H}|
\notag \\ 
&\leq C_{1}(1 + h + h^2)\|\varphi-\overline{\varphi}\|_{V}\|w\|_{V}
\end{align*}
for all $\varphi, \overline{\varphi} \in V$ and all $h>0$. 
Moreover, 
the inequality $\langle \Psi\varphi - {\cal L}0, \varphi \rangle_{V^*, V} 
\geq \omega_{2, 1}h^2 \|\varphi\|_{V}^2$ 
holds for all $\varphi \in V$ and all $h \in (0, \widetilde{h})$. 
Therefore the operator $\Psi : V \to V^*$ is surjective for all $h \in (0, \widetilde{h})$ 
(see e.g., \cite[p.\ 37]{Barbu}) 
and then we see from (A10) that for all $g \in H$ and all $h \in (0, \widetilde{h})$ 
there exists a unique solution $\varphi_{\lambda} \in D(B_{1}) \cap D(A_{2})$ 
of the equation 
\begin{align}\label{BJ1}
L\varphi_{\lambda} + hB_{1}\varphi_{\lambda} + h^2A_{2}\varphi_{\lambda} 
+ h^2\Phi_{\lambda}\varphi_{\lambda} + h^2{\cal L}\varphi_{\lambda} 
+ \eta h^2 B_2 (I+hA_1)^{-1}\varphi_{\lambda} 
= g   \quad \mbox{in}\ H. 
\end{align}
Here, multiplying \eqref{BJ1} by $\varphi_{\lambda}$ 
and using the Young inequality, (A11), \eqref{A1B2}, 
we infer that   
\begin{align*}
&(L\varphi_{\lambda}, \varphi_{\lambda})_{H} 
+ h(B_{1}\varphi_{\lambda}, \varphi_{\lambda})_{H} 
+ h^2 \langle A_{2}^{*}\varphi_{\lambda}, \varphi_{\lambda} \rangle_{V^{*}, V}   
+ h^2(\Phi_{\lambda}\varphi_{\lambda}, \varphi_{\lambda})_{H} 
\notag \\ 
&= (g, \varphi_{\lambda})_{H} 
     - h^2({\cal L}\varphi_{\lambda} - {\cal L}0, \varphi_{\lambda})_{H} 
     - h^2({\cal L}0, \varphi_{\lambda})_{H} 
     -\eta h^2 (B_2 (I + h A_1)^{-1}\varphi_{\lambda}, \varphi_{\lambda})_{H} 
\notag \\ 
&\leq \frac{c_{L}}{2}\|\varphi_{\lambda}\|_{H}^2 
         + \frac{1}{2c_{L}}\|g\|_{H}^2 
         + C_{{\cal L}}h^2\|\varphi_{\lambda}\|_{H}^2 
         + \frac{\|{\cal L}0\|_{H}^2}{2}h^2 
         + \frac{1}{2}h^2\|\varphi_{\lambda}\|_{H}^2 
\notag \\
&\,\quad + \eta C_{A_1, B_2}(h+h^2)\|\varphi_{\lambda}\|_{H}^2,      
\end{align*}
whence the conditions (A2) and (A3), 
the monotonicity of $B_{1}$ and $\Phi_{\lambda}$ 
imply that 
there exists $h_{1} \in (0, \min\{1, \widetilde{h}\})$ such that 
for all $h \in (0, h_{1})$ there exists a constant $C_{2}=C_{2}(h)>0$ satisfying 
\begin{align}\label{BJ2}
\|\varphi_{\lambda}\|_{V}^2 \leq C_{2}   
\end{align}
for all $\lambda>0$. 
We have from \eqref{BJ1}, (A8), \eqref{A1B2} and the Young inequality that 
\begin{align*}
&h^2\|\Phi_{\lambda}\varphi_{\lambda}\|_{H}^2 
\notag \\ 
&= (g, \Phi_{\lambda}\varphi_{\lambda})_{H} 
     - (L\varphi_{\lambda}, \Phi_{\lambda}\varphi_{\lambda})_{H} 
    - h(B_{1}\varphi_{\lambda}, \Phi_{\lambda}\varphi_{\lambda})_{H} 
    - h^2(A_{2}\varphi_{\lambda}, \Phi_{\lambda}\varphi_{\lambda})_{H} 
\notag \\ 
&\,\quad- h^2({\cal L}\varphi_{\lambda}, \Phi_{\lambda}\varphi_{\lambda})_{H} 
     -\eta h^2 (B_2 (I + h A_1)^{-1}\varphi_{\lambda}, 
                                                        \Phi_{\lambda}\varphi_{\lambda})_{H} 
\notag \\ 
&\leq \frac{2}{h^2}\|g\|_{H}^2 + \frac{2}{h^2}\|L\varphi_{\lambda}\|_{H}^2 
         + 2 h^2\|{\cal L}\varphi_{\lambda}\|_{H}^2 
         + 2 \eta^2 C_{A_1, B_2}^2 (1+h)^2 \|\varphi_{\lambda}\|_{H}^2 
         + \frac{1}{2}h^2\|\Phi_{\lambda}\varphi_{\lambda}\|_{H}^2.    
\end{align*}
Thus, owing to the boundedness of the operator $L : H \to H$, 
(A11) and \eqref{BJ2}, 
it holds that for all $h \in (0, h_{1})$ 
there exists a constant $C_{3}=C_{3}(h)>0$ such that 
\begin{align}\label{BJ3}
\|\Phi_{\lambda}\varphi_{\lambda}\|_{H}^2 \leq C_{3}   
\end{align}
for all $\lambda>0$. 
The equation \eqref{BJ1} yields that 
\begin{align*}
h\|B_{1}\varphi_{\lambda}\|_{H}^2 
&= (g, B_{1}\varphi_{\lambda})_{H} 
     - (L\varphi_{\lambda}, B_{1}\varphi_{\lambda})_{H} 
    - h^2(A_{2}\varphi_{\lambda}, B_{1}\varphi_{\lambda})_{H} 
    - h^2(\Phi_{\lambda}\varphi_{\lambda}, B_{1}\varphi_{\lambda})_{H} 
\notag \\ 
&\,\quad- h^2({\cal L}\varphi_{\lambda}, B_{1}\varphi_{\lambda})_{H} 
-\eta h^2 (B_2 (I + h A_1)^{-1}\varphi_{\lambda}, B_{1}\varphi_{\lambda})_{H},  
\end{align*}
and hence we deduce from the boundedness of the operator $L : H \to H$, 
(A4), (A8), (A11), \eqref{A1B2}, the Young inequality and \eqref{BJ2} that 
for all $h \in (0, h_{1})$ there exists a constant $C_{4}=C_{4}(h)>0$ satisfying 
\begin{align}\label{BJ4}
\|B_1 \varphi_{\lambda}\|_{H}^2 \leq C_{4}(h)   
\end{align}
for all $\lambda>0$. 
We derive from \eqref{A1B2}-\eqref{BJ4} that for all $h \in (0, h_{1})$ 
there exists a constant $C_{5}=C_{5}(h)>0$ such that  
\begin{align}\label{BJ5}
\|A_{2}\varphi_{\lambda}\|_{H}^2 \leq C_{5}(h)   
\end{align}
for all $\lambda>0$. 
Hence the inequalities \eqref{BJ2}-\eqref{BJ5} mean that 
there exist $\varphi \in D(B_{1}) \cap D(A_{2})$ and $\xi \in H$ such that 
\begin{align}
\label{ellipweak1} 
&\varphi_{\lambda} \to \varphi \quad \mbox{weakly in}\ V,  \\ 
\label{ellipweak1'} 
&L\varphi_{\lambda} \to L\varphi \quad \mbox{weakly in}\ H,  \\ 
\label{ellipweak2}
&\Phi_{\lambda}(\varphi_{\lambda}) \to \xi  
                                             \quad \mbox{weakly in}\ H,  \\ 
\label{ellipweak3}
&B_{1}\varphi_{\lambda} \to B_{1}\varphi \quad \mbox{weakly in}\ H,  \\ 
\label{ellipweak4}
&A_{2}\varphi_{\lambda} \to A_{2}\varphi 
                                              \quad \mbox{weakly in}\ H 
\end{align}
as $\lambda = \lambda_{j} \to +0$. 
Here it follows from \eqref{BJ2}, \eqref{ellipweak1}, the compact of 
the embedding $V \hookrightarrow H$ that 
\begin{align}\label{ellipstrong} 
\varphi_{\lambda} \to \varphi \quad \mbox{strongly in}\ H
\end{align}
as $\lambda = \lambda_{j} \to +0$. 
Also, we see from \eqref{ellipweak2} and \eqref{ellipstrong} that  
$(\Phi_{\lambda}\varphi_{\lambda}, \varphi_{\lambda})_{H} 
\to (\xi, \varphi)_{H}$  
as $\lambda = \lambda_{j} \to +0$. 
Thus the inclusion and the identity 
\begin{align}\label{BJ6}
\varphi \in D(\Phi),\ \xi = \Phi\varphi 
\end{align}
hold (see e.g., \cite[Lemma 1.3, p.\ 42]{Barbu1}).   

Therefore, thanks to \eqref{BJ1}, \eqref{ellipweak1'}-\eqref{BJ6} and (A11), 
we can verify that 
there exists a solution $\varphi \in D(B_{1}) \cap D(A_{2})$ of the equation 
\[
L\varphi + hB_{1}\varphi + h^2 A_{2}\varphi 
+ h^2 \Phi\varphi + h^2 {\cal L}\varphi + \eta h^2 B_2 (I + h A_1)^{-1}\varphi = g 
\quad \mbox{in}\ H. 
\] 
Moreover, 
the solution $\varphi$ of this problem is unique 
by (A2), (A3), the monotonicity of $B_{1}$ and $\Phi$, (A11) and \eqref{A1B2}.    
\end{proof}

\begin{prth1.1}
Let $h_{1}$ be as in Lemma \ref{elliptic2} and let $h \in (0, h_{1})$. 
Then we infer from \eqref{deltah}, the linearity of 
the operators $A_{1}$, $L$, $B_{1}$, $B_{2}$ and $A_{2}$ that 
the problem \ref{Pn} can be written as 
\begin{equation*}\tag*{(Q)$_{n}$}\label{Qn}
     \begin{cases}
      \theta_{n+1} + hA_{1}\theta_{n+1} 
      = \theta_{n} + \eta(\varphi_{n} - \varphi_{n+1}), 
      \\[3mm] 
      L\varphi_{n+1} + hB_{1}\varphi_{n+1} + h^2A_{2}\varphi_{n+1} 
      + h^2\Phi\varphi_{n+1} + h^2{\cal L}\varphi_{n+1} 
      + \eta h^2 B_2 (I + h A_1)^{-1} \varphi_{n+1}
      \\ 
      = L\varphi_{n} + hLv_{n} + hB_{1}\varphi_{n} 
         + h^2 B_2 (I + h A_1)^{-1} (\eta\varphi_{n}+\theta_{n})   
     \end{cases}
\end{equation*}
and then proving Theorem \ref{maintheorem1} is equivalent to 
show existence and uniqueness of solutions to \ref{Qn} for $n=0, ..., N-1$. 
It suffices to consider the case that $n=0$. 
Owing to Lemma \ref{elliptic2}, 
there exists a unique solution $\varphi_{1} \in D(B_{1}) \cap D(A_{2})$ 
of the equation 
\begin{align*}
&L\varphi_{1} + hB_{1}\varphi_{1} + h^2A_{2}\varphi_{1} 
      + h^2\Phi\varphi_{1} + h^2{\cal L}\varphi_{1} 
      + \eta h^2 B_2 (I + h A_1)^{-1} \varphi_{1} 
\\ 
&= L\varphi_{0} + hLv_{0} + hB_{1}\varphi_{0} 
         + h^2 B_2 (I + h A_1)^{-1} (\eta\varphi_{0}+\theta_{0}).    
\end{align*}
Therefore, putting $\theta_{1}:=(I + hA_{1})^{-1}(\theta_{0}+\eta(\varphi_{0}-\varphi_{1}))$, 
we can conclude that 
there exists a unique solution $(\theta_{1}, \varphi_{1})$ of \ref{Qn} 
in the case that $n=0$. 
\qed  
\end{prth1.1}

\vspace{10pt}
 
\section{Uniform estimates for \ref{Ph} and passage to the limit}\label{Sec4}

In this section we will derive a priori estimates for \ref{Ph}
and will show Theorem \ref{maintheorem2} 
by passing to the limit in \ref{Ph} as $h\to+0$.

\begin{lem}\label{lemkuri1}
Let $h_{0}$ be as in Theorem \ref{maintheorem1}. 
Then there exist constants 
$h_{2} \in (0, h_{0})$ and $C=C(T)>0$ such that  
\begin{align*}
&\|\overline{v}_{h}\|_{L^{\infty}(0, T; H)}^2 
+ h\|\overline{z}_{h}\|_{L^2(0, T; H)}^2 
+ \|B_{1}^{1/2}\overline{v}_{h}\|_{L^2(0, T; H)}^2   
+ \|\overline{\varphi}_{h}\|_{L^{\infty}(0, T; V)}^2 
\\
&+ h\|\overline{v}_{h}\|_{L^2(0, T; V)}^2  
+ \|B_{2}^{1/2}\overline{\theta}_{h}\|_{L^{\infty}(0, T; H)}^2 
+ h\Bigl\|B_{2}^{1/2}\frac{d\widehat{\theta}_{h}}{dt}\Bigr\|_{L^2(0, T; H)}^2 
\leq C 
\end{align*}
for all $h \in (0, h_{2})$. 
\end{lem}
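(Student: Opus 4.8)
The plan is to obtain the a priori bounds by testing the discrete equations of \ref{Ph} with suitable discrete ``velocities'' and summing over $n$, mimicking the standard energy estimate for the continuous problem but carefully tracking the discretization remainders. Concretely, I would work at the level of \ref{Pn}: multiply the second equation (the one involving $Lz_{n+1}$) in $H$ by $v_{n+1}=\delta_h\varphi_n$ and multiply the first equation by $A_1\theta_{n+1}$ (or rather by a combination chosen so the coupling terms cancel). Using the elementary identity $(\delta_h v_n, v_{n+1})_H = \frac{1}{2}\delta_h(\|v_n\|_H^2) + \frac{h}{2}\|\delta_h v_n\|_H^2$ — and its analogue with $L$, $B_2$, $A_2$ in place of the inner product — produces the telescoping energy terms $\frac{1}{2}\delta_h\|L^{1/2}v_n\|_H^2$, $\frac{1}{2}\delta_h\langle A_2^*\varphi_n,\varphi_n\rangle$, $\frac{1}{2}\delta_h\|B_2^{1/2}\theta_n\|_H^2$ plus the nonnegative ``numerical viscosity'' contributions $\frac{h}{2}\|z_{n+1}\|$-type terms, which are exactly the quantities appearing on the left side of the asserted estimate.

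Next I would handle the remaining terms. The monotone terms $B_1 v_{n+1}$ tested against $v_{n+1}$ give $\|B_1^{1/2}v_{n+1}\|_H^2$, which is kept on the left. The term $\Phi\varphi_{n+1}$ tested against $v_{n+1}=\delta_h\varphi_n$ is controlled below by the convexity inequality (A7): $(\Phi\varphi_{n+1},\delta_h\varphi_n)_H \ge \frac{1}{h}(i(\varphi_{n+1})-i(\varphi_n))$, again telescoping. The Lipschitz term ${\cal L}\varphi_{n+1}$ and the coupling term $B_2\theta_{n+1}$ are estimated using (A11), (A5) and Young's inequality, producing terms absorbable into the energy or controllable by a discrete Gronwall argument. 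For the $\theta$-equation, testing by $A_1\theta_{n+1}$ and using $(B_2\theta_{n+1},A_1\theta_{n+1})_H\ge 0$ from (A4) together with (A5) gives the bound on $\|B_2^{1/2}\theta_h\|$ and on $\int(B_2(\overline\theta_h),A_1(\overline\theta_h))$-type quantities; the coupling $\eta\,\delta_h\varphi_n$ is paired against the $v_{n+1}$ terms from the other equation so the off-diagonal contributions cancel or combine into a manageable sign-definite piece. After summing from $n=0$ to $n=m-1$ and recalling $z_0=z_1$, one arrives at a discrete inequality of the form $E_m + (\text{good terms}) \le E_0 + C h\sum_{n<m} E_{n+1} + C$, where $E_m$ is the total discrete energy at step $m$; the initial energy $E_0$ is finite by (A12).

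Then I would apply the discrete Gronwall lemma: for $h$ small enough (say $h\in(0,h_2)$ with $Ch<1$, shrinking $h_0$ if necessary), the factor $(1-Ch)^{-1}$ stays bounded uniformly and one concludes $\max_m E_m \le C(T)$, together with the summed good terms being bounded by $C(T)$. Rewriting via \eqref{hat1}--\eqref{overandunderline} and the Remark identities \eqref{rem1}--\eqref{rem6} translates these into the stated norms: $\|\overline v_h\|_{L^\infty(0,T;H)}$, $h\|\overline z_h\|_{L^2}^2$, $\|B_1^{1/2}\overline v_h\|_{L^2}^2$, $\|\overline\varphi_h\|_{L^\infty(0,T;V)}$ (using the coercivity in (A3) to pass from $\langle A_2^*\varphi,\varphi\rangle+\alpha\|\varphi\|_H^2$ to $\|\varphi\|_V^2$), $h\|\overline v_h\|_{L^2(0,T;V)}^2$, $\|B_2^{1/2}\overline\theta_h\|_{L^\infty(0,T;H)}$ and $h\|B_2^{1/2}\,d\widehat\theta_h/dt\|_{L^2}^2$.

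The main obstacle I anticipate is the careful bookkeeping of the coupling between the two equations: one must choose the test functions and the linear combination of the two discrete equations so that the cross terms ($\eta\,\delta_h\varphi_n$ in the first equation and $B_2\theta_{n+1}$ in the second) either cancel or produce sign-definite quantities rather than terms that would require an estimate on $\|A_1\theta_{n+1}\|_H$ not yet available. This is where conditions (A4) (the inequalities $(B_2 w,A_1 w)_H\ge 0$ and $(B_1 w,A_2 w)_H\ge 0$ and the symmetry $(B_1w,A_2z)_H=(B_1z,A_2w)_H$) and (A5), (A8) are used in an essential way, and getting the constants right so that the Gronwall step closes for small $h$ is the delicate part; the rest is routine though lengthy.
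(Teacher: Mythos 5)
Your proposal is correct and follows essentially the same route as the paper: test the second equation of \ref{Pn} by $hv_{n+1}=\varphi_{n+1}-\varphi_{n}$, telescope the $L$-, $A_{2}^{*}$- and $i$-energies via (A7), and eliminate the coupling term $h(B_{2}\theta_{n+1},v_{n+1})_{H}$ through the first equation so that it becomes the telescoping $\frac{1}{2\eta}\|B_{2}^{1/2}\theta_{n}\|_{H}^{2}$-energy plus the sign-definite term $-\frac{h}{\eta}(B_{2}\theta_{n+1},A_{1}\theta_{n+1})_{H}\le 0$ from (A4), then sum, use the coercivity in (A2)--(A3), and close with the discrete Gronwall lemma. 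The only imprecision is that the multiplier for the first equation should be $\frac{h}{\eta}B_{2}\theta_{n+1}$ (equivalently, substitute $v_{n+1}=-\frac{1}{\eta}(\delta_{h}\theta_{n}+A_{1}\theta_{n+1})$ into the coupling term) rather than $A_{1}\theta_{n+1}$, and neither (A5) nor (A8) is actually needed here; your hedged wording and subsequent description show you have the right mechanism.
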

\begin{proof}
We test the second equation in \ref{Pn} by $hv_{n+1}$ ($= \varphi_{n+1}-\varphi_{n}$) 
and recall \eqref{deltah} to obtain that 
\begin{align}\label{a1}
&(L(v_{n+1}-v_{n}), v_{n+1})_{H} + h\|B_{1}^{1/2}v_{n+1}\|_{H}^2  
+ \langle A_{2}^{*}\varphi_{n+1}, \varphi_{n+1}-\varphi_{n} \rangle_{V^{*}, V}  
\notag \\ 
&+ (\varphi_{n+1}, \varphi_{n+1}-\varphi_{n})_{H} 
+ (\Phi\varphi_{n+1}, \varphi_{n+1}-\varphi_{n})_{H} 
\notag \\ 
&= h(B_{2}\theta_{n+1}, v_{n+1})_{H} - h({\cal L}\varphi_{n+1}, v_{n+1})_{H} 
     + h(\varphi_{n+1}, v_{n+1})_{H}. 
\end{align}
Here it holds that 
\begin{align}\label{a2}
&(L(v_{n+1}-v_{n}), v_{n+1})_{H} 
\notag \\ 
&= (L^{1/2}(v_{n+1}-v_{n}), L^{1/2}v_{n+1})_{H} 
\notag \\ 
&=\frac{1}{2}\|L^{1/2}v_{n+1}\|_{H}^2 - \frac{1}{2}\|L^{1/2}v_{n}\|_{H}^2 
    + \frac{1}{2}\|L^{1/2}(v_{n+1}-v_{n})\|_{H}^2  
\end{align}
and 
\begin{align}\label{a3}
&\langle A_{2}^{*}\varphi_{n+1}, \varphi_{n+1}-\varphi_{n} \rangle_{V^{*}, V}  
+ (\varphi_{n+1}, \varphi_{n+1}-\varphi_{n})_{H} 
\notag \\ 
&= \frac{1}{2}\langle A_{2}^{*}\varphi_{n+1}, \varphi_{n+1} \rangle_{V^{*}, V} 
     - \frac{1}{2}\langle A_{2}^{*}\varphi_{n}, \varphi_{n} \rangle_{V^{*}, V} 
     + \frac{1}{2}\langle A_{2}^{*}(\varphi_{n+1}-\varphi_{n}), 
                                               \varphi_{n+1}-\varphi_{n} \rangle_{V^{*}, V}  
\notag \\ 
&\,\quad + \frac{1}{2}\|\varphi_{n+1}\|_{H}^2 - \frac{1}{2}\|\varphi_{n}\|_{H}^2 
    + \frac{1}{2}\|\varphi_{n+1}-\varphi_{n}\|_{H}^2. 
\end{align}
The first equation in \ref{Pn} yields that 
\begin{align}\label{a3'}
&h(B_{2}\theta_{n+1}, v_{n+1})_{H} 
\notag \\
&= \frac{h}{\eta}\Bigl(B_{2}\theta_{n+1}, 
                                  -\frac{\theta_{n+1}-\theta_{n}}{h}-A_{1}\theta_{n+1} \Bigr)_{H} 
\notag \\ 
&=-\frac{1}{2\eta}
         \Bigl(\|B_{2}^{1/2}\theta_{n+1}\|_{H}^2 - \|B_{2}^{1/2}\theta_{n}\|_{H}^2 
                  + \|B_{2}^{1/2}(\theta_{n+1}-\theta_{n})\|_{H}^2 \Bigr) 
\notag \\ 
&\,\quad -\frac{h}{\eta}(B_{2}\theta_{n+1}, A_{1}\theta_{n+1})_{H}. 
\end{align}
Thus it follows from \eqref{a1}-\eqref{a3'}, (A4), (A7), (A11), 
the continuity of the embedding $V \hookrightarrow H$ 
and the Young inequality that 
there exist constants $C_{1}, C_{2} > 0$ such that 
\begin{align}\label{a4}
&\frac{1}{2}\|L^{1/2}v_{n+1}\|_{H}^2 - \frac{1}{2}\|L^{1/2}v_{n}\|_{H}^2 
    + \frac{1}{2}\|L^{1/2}(v_{n+1}-v_{n})\|_{H}^2 
    + h\|B_{1}^{1/2}v_{n+1}\|_{H}^2  
\notag \\
&+ \frac{1}{2}\langle A_{2}^{*}\varphi_{n+1}, \varphi_{n+1} \rangle_{V^{*}, V} 
     - \frac{1}{2}\langle A_{2}^{*}\varphi_{n}, \varphi_{n} \rangle_{V^{*}, V} 
     + \frac{1}{2}\langle A_{2}^{*}(\varphi_{n+1}-\varphi_{n}), 
                                               \varphi_{n+1}-\varphi_{n} \rangle_{V^{*}, V}  
\notag \\ 
&+ \frac{1}{2}\|\varphi_{n+1}\|_{H}^2 - \frac{1}{2}\|\varphi_{n}\|_{H}^2 
    + \frac{1}{2}\|\varphi_{n+1}-\varphi_{n}\|_{H}^2 
   + i(\varphi_{n+1}) - i(\varphi_{n}) 
\notag \\ 
& + \frac{1}{2\eta}\|B_{2}^{1/2}\theta_{n+1}\|_{H}^2 
             - \frac{1}{2\eta}\|B_{2}^{1/2}\theta_{n}\|_{H}^2 
             + \frac{1}{2\eta}\|B_{2}^{1/2}(\theta_{n+1}-\theta_{n})\|_{H}^2
\notag \\
&\leq h\|v_{n+1}\|_{H}^2 
        + C_{1}h\|\varphi_{n+1}\|_{V}^2 + C_{2}h
\end{align}
for all $h \in (0, h_{0})$. 
Moreover, summing \eqref{a4} over $n = 0, ..., m-1$ with $1 \leq m \leq N$ 
leads to the inequality 
\begin{align}\label{a7}
&\frac{1}{2}\|L^{1/2}v_{m}\|_{H}^2 
+ \frac{1}{2}\sum_{n=0}^{m-1}\|L^{1/2}(v_{n+1}-v_{n})\|_{H}^2 
+ h\sum_{n=0}^{m-1}\|B_{1}^{1/2}v_{n+1}\|_{H}^2  
+ \frac{1}{2}\langle A_{2}^{*}\varphi_{m}, \varphi_{m} \rangle_{V^{*}, V} 
\notag \\
&+ \frac{1}{2}\|\varphi_{m}\|_{H}^2 
+ \frac{1}{2}\sum_{n=0}^{m-1}\langle A_{2}^{*}(\varphi_{n+1}-\varphi_{n}), 
                                               \varphi_{n+1}-\varphi_{n} \rangle_{V^{*}, V} 
+ \frac{1}{2}\sum_{n=0}^{m-1}\|\varphi_{n+1}-\varphi_{n}\|_{H}^2   
\notag \\
&+ i(\varphi_{m}) + \frac{1}{2\eta}\|B_{2}^{1/2}\theta_{m}\|_{H}^2 
+ \frac{1}{2\eta}\sum_{n=0}^{m-1}\|B_{2}^{1/2}(\theta_{n+1}-\theta_{n})\|_{H}^2
\notag \\ 
&\leq \frac{1}{2}\|L^{1/2}v_{0}\|_{H}^2 
   + \frac{1}{2}\langle A_{2}^{*}\varphi_{0}, \varphi_{0} \rangle_{V^{*}, V} 
   + \frac{1}{2}\|\varphi_{0}\|_{H}^2 + i(\varphi_{0}) 
   + \frac{1}{2\eta}\|B_{2}^{1/2}\theta_{0}\|_{H}^2 
\notag \\
&\,\quad+ h\sum_{n=0}^{m-1}\|v_{n+1}\|_{H}^2 
        + C_{1}h\sum_{n=0}^{m-1}\|\varphi_{n+1}\|_{V}^2 + C_{2}T  
\end{align}
for all $h \in (0, h_{0})$. 
Here we see from (A3) that 
\begin{align}\label{a8}
\frac{1}{2}\langle A_{2}^{*}\varphi_{m}, \varphi_{m} \rangle_{V^{*}, V} 
+ \frac{1}{2}\|\varphi_{m}\|_{H}^2 
\geq \frac{\omega_{2, 1}}{2}\|\varphi_{m}\|_{V}^2 
\end{align}
and 
\begin{align}\label{a9}
&\frac{1}{2}\sum_{n=0}^{m-1}\langle A_{2}^{*}(\varphi_{n+1}-\varphi_{n}), 
                                              \varphi_{n+1}-\varphi_{n} \rangle_{V^{*}, V} 
+ \frac{1}{2}\sum_{n=0}^{m-1}\|\varphi_{n+1}-\varphi_{n}\|_{H}^2   
\notag \\ 
&\geq \frac{\omega_{2, 1}}{2}\sum_{n=0}^{m-1}\|\varphi_{n+1}-\varphi_{n}\|_{V}^2 
= \frac{\omega_{2, 1}}{2}h^2\sum_{n=0}^{m-1}\|v_{n+1}\|_{V}^2.  
\end{align}
Thus we derive from \eqref{a7}-\eqref{a9} and (A2) that 
\begin{align*}
&\left(\frac{c_{L}}{2}-h\right)\|v_{m}\|_{H}^2 
+ \frac{c_{L}}{2}h^2\sum_{n=0}^{m-1}\|z_{n+1}\|_{H}^2 
+ h\sum_{n=0}^{m-1}\|B_{1}^{1/2}v_{n+1}\|_{H}^2  
+ \left(\frac{\omega_{2, 1}}{2} - C_{1}h \right)\|\varphi_{m}\|_{V}^2 
\notag \\
&+ \frac{\omega_{2, 1}}{2}h^2\sum_{n=0}^{m-1}\|v_{n+1}\|_{V}^2   
+ \frac{1}{2\eta}\|B_{2}^{1/2}\theta_{m}\|_{H}^2 
+ \frac{1}{2\eta}h^2\sum_{n=0}^{m-1}\|B_{2}^{1/2}\delta_{h}\theta_{n}\|_{H}^2 
\notag \\
&\leq \frac{1}{2}\|L^{1/2}v_{0}\|_{H}^2 
   + \frac{1}{2}\langle A_{2}^{*}\varphi_{0}, \varphi_{0} \rangle_{V^{*}, V} 
   + \frac{1}{2}\|\varphi_{0}\|_{H}^2 + i(\varphi_{0}) 
   + \frac{1}{2\eta}\|B_{2}^{1/2}\theta_{0}\|_{H}^2 
\notag \\
&\,\quad+ h\sum_{j=0}^{m-1}\|v_{j}\|_{H}^2 
        + C_{1}h\sum_{j=0}^{m-1}\|\varphi_{j}\|_{V}^2 + C_{2}T, 
\end{align*}
whence there exist constants $h_{2} \in (0, h_{0})$ 
and $C_{3} = C_{3}(T) > 0$ such that 
\begin{align}\label{a12}
&\|v_{m}\|_{H}^2 
+ h^2\sum_{n=0}^{m-1}\|z_{n+1}\|_{H}^2 
+ h\sum_{n=0}^{m-1}\|B_{1}^{1/2}v_{n+1}\|_{H}^2  
\notag \\ 
&+ \|\varphi_{m}\|_{V}^2 
+ h^2\sum_{n=0}^{m-1}\|v_{n+1}\|_{V}^2   
+ \|B_{2}^{1/2}\theta_{m}\|_{H}^2 
+ h^2\sum_{n=0}^{m-1}\|B_{2}^{1/2}\delta_{h}\theta_{n}\|_{H}^2
\notag \\
&\leq C_{3}h\sum_{j=0}^{m-1}\|v_{j}\|_{H}^2 
        + C_{3}h\sum_{j=0}^{m-1}\|\varphi_{j}\|_{V}^2 + C_{3}
\end{align}
for all $h \in (0, h_{2})$. 
Therefore it follows from the inequality \eqref{a12} and 
the discrete Gronwall lemma (see e.g., \cite[Prop.\ 2.2.1]{Jerome}) that 
there exists a constant $C_{4} = C_{4}(T) > 0$ such that   
\begin{align*}
&\|v_{m}\|_{H}^2 
+ h^2\sum_{n=0}^{m-1}\|z_{n+1}\|_{H}^2 
+ h\sum_{n=0}^{m-1}\|B_{1}^{1/2}v_{n+1}\|_{H}^2  
\notag \\ 
&+ \|\varphi_{m}\|_{V}^2 
+ h^2\sum_{n=0}^{m-1}\|v_{n+1}\|_{V}^2   
+ \|B_{2}^{1/2}\theta_{m}\|_{H}^2 
+ h^2\sum_{n=0}^{m-1}\|B_{2}^{1/2}\delta_{h}\theta_{n}\|_{H}^2 
\leq  C_{4}
\end{align*}
for all $h \in (0, h_{2})$ and $m=1,..., N$. 
\end{proof}

\begin{lem}\label{lemkuri3}
Let $h_{2}$ be as in Lemma \ref{lemkuri1}. 
Then there exists a constant $C=C(T)>0$ such that  
\begin{align*}
\|z_{1}\|_{H}^2 + h\|B_{1}^{1/2}z_{1}\|_{H}^2 
+ \|v_{1}\|_{V}^2 + h^2\|z_{1}\|_{V}^2 
+ \langle B_{2}^* (\eta v_{1} + A_{1}\theta_{1}), \eta v_{1} + A_{1}\theta_{1} \rangle_{V^*, V} 
\leq C 
\end{align*}
for all $h\in(0, h_{2})$. 
\end{lem}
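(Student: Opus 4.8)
My plan is to exploit the two equations of \ref{Pn} at $n=0$ together with the resolvent formula for $\theta_1$ and the regularity of the data in (A12), starting from the uniform bounds $\|v_1\|_H,\|\varphi_1\|_V\le C$ furnished by Lemma \ref{lemkuri1}. Writing $v_1=\delta_h\varphi_0$, $z_1=\delta_h v_0$ and recalling from Theorem \ref{maintheorem1} that $\varphi_1\in D(B_1)\cap D(A_2)$ and $\theta_1\in D(A_1)$, one first notes, using (A12), (A4) and $D(A_2)\subset V$, that $v_1\in D(B_1)\cap D(A_2)$ and $z_1\in D(B_1)\cap V$. The first equation at $n=0$ gives $(I+hA_1)\theta_1=\theta_0+\eta(\varphi_0-\varphi_1)=\theta_0-\eta h v_1$, so $\theta_1=(I+hA_1)^{-1}\theta_0-\eta h(I+hA_1)^{-1}v_1$; since $(I+hA_1)^{-1}$ is non-expansive on $H$ and commutes with $A_1$, since $A_1\theta_0\in V\subset H$ by (A12), and since $\|v_1\|_H\le C$, the bound (A5) together with \eqref{A1B2} yields $\|B_2\theta_1\|_H\le C$ uniformly in $h$.

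Next I would insert $\varphi_1=\varphi_0+hv_1$ into the second equation at $n=0$ to obtain the $H$-identity
\[
 L z_1+B_1 v_1+h A_2 v_1=B_2\theta_1-A_2\varphi_0-\Phi\varphi_1-{\cal L}\varphi_1=:G,
\]
whose right-hand side obeys $\|G\|_H\le C$ uniformly in $h$: $\|B_2\theta_1\|_H\le C$ by the previous step, $A_2\varphi_0\in H$ is fixed by (A12), and $\|\Phi\varphi_1\|_H+\|{\cal L}\varphi_1\|_H\le C$ by (A6) (note $\Phi(0)=0$) and (A11) with $\|\varphi_1\|_V\le C$. Testing this identity in $H$ by $z_1$ — admissible since $z_1\in V\cap D(B_1)$, $v_1\in D(A_2)$ and $v_0\in D(B_1)\cap V$ — and using $v_1=v_0+hz_1$ I get
\[
 (L z_1,z_1)_H+h\|B_1^{1/2}z_1\|_H^2+h^2\langle A_2^*z_1,z_1\rangle_{V^*,V}
  =(G,z_1)_H-(B_1 v_0,z_1)_H-h\langle A_2^*v_0,z_1\rangle_{V^*,V}.
\]
By (A2) and the coercivity in (A3) (with $\alpha=1$) the left side is at least $(c_L-h^2)\|z_1\|_H^2+h\|B_1^{1/2}z_1\|_H^2+h^2\omega_{2,1}\|z_1\|_V^2$, and I would bound the three terms on the right by Young's inequality. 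The decisive point is that the factor $h$ in front of $\langle A_2^*v_0,z_1\rangle$ is exactly what lets that term be estimated by $\tfrac12 h^2\omega_{2,1}\|z_1\|_V^2+C$ with $C$ independent of $h$ (the boundedness of $A_2^*:V\to V^*$ from (A3) is used here), so that it is absorbed into the only $h^2$-weighted coercive term. For $h$ small this gives $\|z_1\|_H^2+h\|B_1^{1/2}z_1\|_H^2+h^2\|z_1\|_V^2\le C$, and $\|v_1\|_V^2\le C$ then follows from $v_1=v_0+hz_1$ (so $h\|z_1\|_V\le C^{1/2}$) and (A12).

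For the last term I would use the first equation at $n=0$ and the resolvent identity $\theta_0-(I+hA_1)^{-1}\theta_0=h(I+hA_1)^{-1}A_1\theta_0$ (valid since $\theta_0\in D(A_1)$) to obtain
\[
 \eta v_1+A_1\theta_1=-\delta_h\theta_0=(I+hA_1)^{-1}(A_1\theta_0+\eta v_1).
\]
The key auxiliary fact is that $(I+hA_1)^{-1}$ is bounded from $V$ into $V$ uniformly in $h$: for $u\in V$, put $w:=(I+hA_1)^{-1}u\in D(A_1)$, so $w+hA_1 w=u$ in $H$; testing by $w$ gives $\|w\|_H\le\|u\|_H$, while testing by $A_1 w\in H$ and rewriting $(u,A_1 w)_H=\langle A_1^*w,u\rangle_{V^*,V}=\langle A_1^*u,w\rangle_{V^*,V}$ via the symmetry of $A_1^*$ in (A3) gives, after dropping $h\|A_1 w\|_H^2\ge0$, $\langle A_1^*w,w\rangle\le\|A_1^*u\|_{V^*}\|w\|_V\le C\|u\|_V\|w\|_V$; combining with the coercivity of $A_1^*$ in (A3) yields $\|w\|_V\le C_*\|u\|_V$. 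Applying this with $u=A_1\theta_0+\eta v_1$, which lies in $V$ and is bounded there by (A12) and the bound on $\|v_1\|_V$, gives $\|\eta v_1+A_1\theta_1\|_V\le C$, and hence $\langle B_2^*(\eta v_1+A_1\theta_1),\eta v_1+A_1\theta_1\rangle_{V^*,V}\le C$ by the boundedness of $B_2^*:V\to V^*$ in (A9). Together with the bounds of the previous paragraph this proves the lemma. I expect the main obstacle to be precisely this last step — transferring the resolvent estimates for $(I+hA_1)^{-1}$ from $H$ to the $V$-norm, uniformly in $h$, which is what makes the $B_2$-weighted quantity of order $O(1)$ rather than the $O(h^{-2})$ one would get directly from Lemma \ref{lemkuri1}; a secondary delicate point is the absorption in the second paragraph, where the coercivity of $A_2^*$ carries only an $h^2$ weight, the remaining steps being a routine test-function computation with bookkeeping of the constants in (A2)--(A11).
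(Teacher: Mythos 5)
Your argument is correct, but it reaches the two delicate terms by a genuinely different route than the paper. For the $B_2$-weighted quantity, the paper never estimates $\|\eta v_1+A_1\theta_1\|_V$ directly: it treats $\tfrac{1}{2\eta}\langle B_2^*(\eta v+A_1\theta),\eta v+A_1\theta\rangle_{V^*,V}$ as part of the discrete energy, uses the first equation to convert $(B_2\theta_1,z_1)_H$ into the telescoped increment of that quadratic form (its \eqref{coco4'}), and then discards the two extra terms by the monotonicity of $B_2^*$ and the sign condition $(B_2w,A_1w)_H\ge0$ from (A4). You instead observe that $\eta v_1+A_1\theta_1=(I+hA_1)^{-1}(A_1\theta_0+\eta v_1)$ and prove an $h$-uniform $V\to V$ bound for the resolvent via the symmetry and coercivity of $A_1^*$ in (A3); this is a clean, self-contained estimate that bypasses the sign hypotheses of (A4) and the monotonicity of $B_2^*$ entirely, at the price of being tied to the explicit resolvent formula at the first step $n=0$ (the paper's telescoping identity is the one that survives summation over $n$ in Lemma \ref{lemkuri4}, which is presumably why the author sets it up this way already here). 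Your handling of $h(A_2v_1,z_1)_H$ — splitting $v_1=v_0+hz_1$, absorbing $h\langle A_2^*v_0,z_1\rangle_{V^*,V}$ into $h^2\omega_{2,1}\|z_1\|_V^2$, and recovering $\|v_1\|_V$ afterwards from $v_1=v_0+hz_1$ — is algebraically equivalent to the paper's identity \eqref{coco3} but organized differently; likewise, bounding $\|B_2\theta_1\|_H$ a priori through (A5) and the resolvent, rather than carrying $(B_2\theta_1,z_1)_H$ into the energy identity, is a legitimate alternative. One small bookkeeping caveat: when you lower-bound $h^2\langle A_2^*z_1,z_1\rangle_{V^*,V}$ by $h^2(\omega_{2,1}\|z_1\|_V^2-\|z_1\|_H^2)$ and then absorb the Young-inequality remainders, the coefficient of $\|z_1\|_H^2$ becomes $c_L-h^2-\varepsilon$, which is not automatically positive for all $h\in(0,h_2)$ if $c_L$ is small; this is harmless because $A_2^*$ is monotone, so you can first drop $h^2\langle A_2^*z_1,z_1\rangle_{V^*,V}\ge0$ to get $\|z_1\|_H^2\le C$ with the full constant $c_L$, and only then reinstate the coercivity to read off $h^2\|z_1\|_V^2\le C$.
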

\begin{proof}
The second equation in \ref{Pn}, 
the identities $v_{1} = v_{0} + hz_{1}$ and $\varphi_{1} = \varphi_{0} + hv_{1}$ 
yield that  
\begin{align}\label{coco1}
Lz_{1} + B_{1}v_{0} + hB_{1}z_{1} + A_{2}\varphi_{0} + hA_{2}v_{1} 
+ \Phi\varphi_{1} + {\cal L}\varphi_{1} = B_{2}\theta_{1}. 
\end{align}
Then we test \eqref{coco1} by $z_{1}$ to infer that 
\begin{align}\label{coco2}
&\|L^{1/2}z_{1}\|_{H}^2 + (B_{1}v_{0}, z_{1})_{H} 
+ h(B_{1}z_{1}, z_{1})_{H} + (A_{2}\varphi_{0}, z_{1})_{H} 
+ h(A_{2}v_{1}, z_{1})_{H}   
\notag \\ 
&+ (\Phi\varphi_{1}, z_{1})_{H} + ({\cal L}\varphi_{1}, z_{1})_{H} 
= (B_{2}\theta_{1}, z_{1})_{H}. 
\end{align}
Here we derive from (A3) that 
\begin{align}\label{coco3}
h(A_{2}v_{1}, z_{1})_{H} 
&= (A_{2}v_{1}, v_{1} - v_{0})_{H} 
= \langle A_{2}^{*}v_{1}, v_{1} - v_{0} \rangle_{V^{*}, V}
\notag \\ 
&= \frac{1}{2}\langle A_{2}^{*}v_{1}, v_{1} \rangle_{V^{*}, V}
     - \frac{1}{2}\langle A_{2}^{*}v_{0}, v_{0} \rangle_{V^{*}, V} 
\notag \\ 
  &\,\quad+ \frac{1}{2}\langle A_{2}^{*}(v_{1} - v_{0}), v_{1} - v_{0} \rangle_{V^{*}, V} 
\notag \\ 
&\geq \frac{\omega_{2, 1}}{2}\|v_{1}\|_{V}^2 - \frac{1}{2}\|v_{1}\|_{H}^2 
     - \frac{1}{2}\langle A_{2}^{*}v_{0}, v_{0} \rangle_{V^{*}, V}  
\notag \\ 
&\,\quad + \frac{\omega_{2, 1}}{2}\|v_{1} - v_{0}\|_{V}^2 
             - \frac{1}{2}\|v_{1} - v_{0}\|_{H}^2.   
\end{align}
We see from (A6) and Lemma \ref{lemkuri1} that 
there exists a constant $C_{1} = C_{1}(T) > 0$ such that 
\begin{align}\label{coco4}
|(\Phi\varphi_{1}, z_{1})_{H}| 
\leq C_{\Phi}(1 + \|\varphi_{1}\|_{V}^{p})\|\varphi_{1}\|_{V}\|z_{1}\|_{H} 
\leq C_{1}\|z_{1}\|_{H}.  
\end{align}
Also, the first equation in \ref{Pn} 
and the identity $v_{1}-v_{0}=hz_{1}$ 
imply that 
\begin{align}\label{coco4'}
&\frac{1}{2\eta}
   \langle B_{2}^* (\eta v_{1} + A_{1}\theta_{1}), \eta v_{1} + A_{1}\theta_{1} \rangle_{V^*, V} 
- \frac{1}{2\eta}
   \langle B_{2}^* (\eta v_{0} + A_{1}\theta_{0}), \eta v_{0} + A_{1}\theta_{0} \rangle_{V^*, V} 
\notag \\ 
&+ \frac{1}{2\eta}
     \langle B_{2}^* (\eta (v_{1}-v_{0}) + A_{1}(\theta_{1}-\theta_{0})), 
                                   \eta (v_{1}-v_{0}) + A_{1}(\theta_{1}-\theta_{0}) \rangle_{V^*, V} 
\notag \\ 
&= \frac{1}{\eta}
        \langle B_{2}^* (\eta v_{1} + A_{1}\theta_{1}), 
                                   \eta (v_{1}-v_{0}) + A_{1}(\theta_{1}-\theta_{0}) \rangle_{V^*, V} 
\notag \\ 
&= - (B_{2}\theta_{1}, z_{1})_{H} + (B_{2}\theta_{0}, z_{1})_{H} 
    - \frac{1}{\eta h}(B_{2}(\theta_{1}-\theta_{0}), A_{1}(\theta_{1}-\theta_{0}))_{H}.   
\end{align}
Hence it follows from \eqref{coco2}-\eqref{coco4'}, (A2), (A4) 
and the monotonicity of $B_{2}^{*} : V \to V^*$ 
that 
\begin{align}\label{coco5}
&c_{L}\|z_{1}\|_{H}^2 + h\|B_{1}^{1/2}z_{1}\|_{H}^2 
+ \frac{\omega_{2, 1}}{2}\|v_{1}\|_{V}^2 + \frac{\omega_{2, 1}}{2}h^2\|z_{1}\|_{V}^2 
\notag \\ 
&+ \frac{1}{2\eta}
   \langle B_{2}^* (\eta v_{1} + A_{1}\theta_{1}), \eta v_{1} + A_{1}\theta_{1} \rangle_{V^*, V} 
\notag \\ 
&\leq - (B_{1}v_{0}, z_{1})_{H} - (A_{2}\varphi_{0}, z_{1})_{H} 
        + \frac{1}{2}\|v_{1}\|_{H}^2 
        + \frac{1}{2}\langle A_{2}^{*}v_{0}, v_{0} \rangle_{V^{*}, V}  
        + \frac{1}{2}\|v_{1} - v_{0}\|_{H}^2  
\notag \\ 
    &\,\quad+ C_{1}\|z_{1}\|_{H} - ({\cal L}\varphi_{1}, z_{1})_{H} + (B_{2}\theta_{0}, z_{1})_{H}
+ \frac{1}{2\eta}
   \langle B_{2}^* (\eta v_{0} + A_{1}\theta_{0}), \eta v_{0} + A_{1}\theta_{0} \rangle_{V^*, V}. 
\end{align}
Thus we deduce from 
\eqref{coco5}, (A11), the Young inequality and Lemma \ref{lemkuri1} 
that Lemma \ref{lemkuri3} holds. 
\end{proof}

\begin{lem}\label{lemkuri4}
Let $h_{2}$ be as in Lemma \ref{lemkuri1}. 
Then there exist constants $h_{3} \in (0, h_{2})$ and $C=C(T)>0$ such that 
\begin{align*}
\|\overline{z}_{h}\|_{L^{\infty}(0, T; H)}^2  
+ \|B_{1}^{1/2}\overline{z}_{h}\|_{L^2(0, T; H)}^2  
+ \|\overline{v}_{h}\|_{L^{\infty}(0, T; V)}^2 
+ h\|\overline{z}_{h}\|_{L^2(0, T; V)}^2  
\leq C 
\end{align*}
for all $h \in (0, h_{3})$. 
\end{lem}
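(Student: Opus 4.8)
The plan is to follow the scheme of the proof of Lemma~\ref{lemkuri1}, but one discrete time level higher: instead of testing the second equation in \ref{Pn} by $hv_{n+1}=\varphi_{n+1}-\varphi_n$, I would test the \emph{time-differenced} second equation by $hz_{n+1}=v_{n+1}-v_n$. Precisely, for $n\ge1$ I subtract the second equation in \ref{Pn} at level $n-1$ from the one at level $n$ and divide by $h$; recalling that $\delta_hv_n=z_{n+1}$ and $\delta_h\varphi_n=v_{n+1}$, this yields
\[
L\delta_hz_n+B_1z_{n+1}+A_2v_{n+1}+\tfrac1h(\Phi\varphi_{n+1}-\Phi\varphi_n)+\tfrac1h({\cal L}\varphi_{n+1}-{\cal L}\varphi_n)=B_2\delta_h\theta_n.
\]
All inner products arising below are legitimate: by Theorem~\ref{maintheorem1} we have $\varphi_{n+1},v_{n+1}\in D(B_1)\cap D(A_2)$ and $\theta_{n+1}\in D(A_1)\subset D(B_2)$, hence $z_{n+1}\in D(B_1)\cap D(A_2)$ and $w_n:=\delta_h\theta_n\in D(A_1)\subset D(B_2)$ for $n\ge1$. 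Testing by $hz_{n+1}$ and using (A2) and (A3) exactly as in the proof of Lemma~\ref{lemkuri1}, the $L$-term telescopes into $\tfrac12\|L^{1/2}z_{n+1}\|_H^2-\tfrac12\|L^{1/2}z_n\|_H^2+\tfrac12\|L^{1/2}(z_{n+1}-z_n)\|_H^2$, the $B_1$-term gives the dissipation $h\|B_1^{1/2}z_{n+1}\|_H^2$, and $h(A_2v_{n+1},z_{n+1})_H=\langle A_2^*v_{n+1},v_{n+1}-v_n\rangle_{V^*,V}$ telescopes into $\tfrac12\langle A_2^*v_{n+1},v_{n+1}\rangle_{V^*,V}-\tfrac12\langle A_2^*v_n,v_n\rangle_{V^*,V}+\tfrac12\langle A_2^*(v_{n+1}-v_n),v_{n+1}-v_n\rangle_{V^*,V}$.

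The step I expect to be the main obstacle is the source term $h(B_2\delta_h\theta_n,z_{n+1})_H$. To treat it I would exploit the time-differenced first equation: subtracting the first equation in \ref{Pn} at level $n-1$ from the one at level $n$ and dividing by $h$ gives $\delta_hw_{n-1}+\eta z_{n+1}+A_1w_n=0$, i.e.\ $\eta z_{n+1}=-\delta_hw_{n-1}-A_1w_n$. Substituting this and using that $B_2$ is selfadjoint and that $(B_2w_n,A_1w_n)_H\ge0$ by (A4) (applicable since $w_n\in D(A_1)$), one obtains
\[
h(B_2\delta_h\theta_n,z_{n+1})_H\le-\tfrac1{2\eta}\bigl(\|B_2^{1/2}w_n\|_H^2-\|B_2^{1/2}w_{n-1}\|_H^2+\|B_2^{1/2}(w_n-w_{n-1})\|_H^2\bigr),
\]
which again telescopes, exactly as the $B_2$-term did in the proof of Lemma~\ref{lemkuri1}. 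The key point is that differentiating the $\theta$-equation is what makes the coupling term manageable, and that condition (A4) is tailored precisely so that the cross term $(B_2w_n,A_1w_n)_H$ carries the right sign.

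For the two nonlinear contributions I would estimate, using (A6), the $L^\infty(0,T;V)$-bound on $\overline\varphi_h$ from Lemma~\ref{lemkuri1}, and $\varphi_{n+1}-\varphi_n=hv_{n+1}$, that $|(\Phi\varphi_{n+1}-\Phi\varphi_n,z_{n+1})_H|\le Ch\|v_{n+1}\|_V\|z_{n+1}\|_H$, and then apply Young's inequality splitting $h=\sqrt h\cdot\sqrt h$ so as to keep a factor $h$ on \emph{both} resulting terms: $\le\tfrac\delta2 h\|z_{n+1}\|_H^2+\tfrac{C}{2\delta}h\|v_{n+1}\|_V^2$. This is the delicate point, because a careless split would leave a sum $\sum_n\|z_{n+1}\|_H^2$ without the factor $h$, which is incompatible with a uniform-in-$h$ Gronwall estimate. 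Then, by (A3) and $\|\overline v_h\|_{L^\infty(0,T;H)}\le C$ (Lemma~\ref{lemkuri1}), $h\|v_{n+1}\|_V^2\le C\bigl(h\langle A_2^*v_{n+1},v_{n+1}\rangle_{V^*,V}+h\bigr)$, whose sum over $n$ is under control. Similarly, by (A11), $|({\cal L}\varphi_{n+1}-{\cal L}\varphi_n,z_{n+1})_H|\le C_{\cal L}h\|v_{n+1}\|_H\|z_{n+1}\|_H\le\tfrac12C_{\cal L}h\|v_{n+1}\|_H^2+\tfrac12C_{\cal L}h\|z_{n+1}\|_H^2$, and $h\sum_n\|v_{n+1}\|_H^2\le CT$ by Lemma~\ref{lemkuri1}.

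Finally I would sum the resulting per-step inequality over $n=1,\dots,m-1$ for $1\le m\le N$, keeping on the left the telescoped point terms together with the nonnegative difference terms $\tfrac12\sum_{n=1}^{m-1}\langle A_2^*(v_{n+1}-v_n),v_{n+1}-v_n\rangle_{V^*,V}$ and $\tfrac1{2\eta}\|B_2^{1/2}w_{m-1}\|_H^2$. The ``initial'' contributions $\|L^{1/2}z_1\|_H^2$, $\langle A_2^*v_1,v_1\rangle_{V^*,V}$ and $\|B_2^{1/2}w_0\|_H^2$ are bounded via Lemma~\ref{lemkuri3} — noting that, by the first equation in \ref{Pn} at $n=0$, $\|B_2^{1/2}w_0\|_H^2=\langle B_2^*(\eta v_1+A_1\theta_1),\eta v_1+A_1\theta_1\rangle_{V^*,V}$ — so the summed inequality attains the Gronwall-ready form
\[
\tfrac{c_L}2\|z_m\|_H^2+h\sum_{n=1}^{m-1}\|B_1^{1/2}z_{n+1}\|_H^2+\tfrac12\langle A_2^*v_m,v_m\rangle_{V^*,V}\le C+Ch\sum_{n=1}^{m-1}\bigl(\|z_{n+1}\|_H^2+\langle A_2^*v_{n+1},v_{n+1}\rangle_{V^*,V}\bigr).
\]
Choosing $h_3\in(0,h_2)$ small enough lets the index-$m$ terms on the right be absorbed into the left, and the discrete Gronwall lemma (\cite[Prop.~2.2.1]{Jerome}) then gives a bound uniform in $m$ and $h\in(0,h_3)$. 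From it one reads off $\|\overline z_h\|_{L^\infty(0,T;H)}^2\le C$ and, via (A3) together with $\|\overline v_h\|_{L^\infty(0,T;H)}\le C$, also $\|\overline v_h\|_{L^\infty(0,T;V)}^2\le C$; reinserting these bounds yields $h\sum_{n=1}^{N-1}\|B_1^{1/2}z_{n+1}\|_H^2\le C$ and, using $v_{n+1}-v_n=hz_{n+1}$ with (A3), $h^2\sum_{n=1}^{N-1}\|z_{n+1}\|_V^2\le C$. Adding the remaining $n=0$ terms $h\|B_1^{1/2}z_1\|_H^2$ and $h^2\|z_1\|_V^2$, bounded by Lemma~\ref{lemkuri3}, gives $\|B_1^{1/2}\overline z_h\|_{L^2(0,T;H)}^2\le C$ and $h\|\overline z_h\|_{L^2(0,T;V)}^2\le C$, which is the claim.
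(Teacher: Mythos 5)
Your proposal is correct and takes essentially the same route as the paper: difference the second equation in time, test with $z_{n+1}$, telescope the $L$- and $A_{2}$-terms, treat the coupling term via the differenced first equation together with the sign condition in (A4) (your telescoping quantity $\|B_{2}^{1/2}w_{n}\|_{H}^{2}$ equals the paper's $\langle B_{2}^{*}(\eta v_{n+1}+A_{1}\theta_{n+1}),\eta v_{n+1}+A_{1}\theta_{n+1}\rangle_{V^{*},V}$ since $\eta v_{n+1}+A_{1}\theta_{n+1}=-\delta_{h}\theta_{n}$), bound the initial terms by Lemma \ref{lemkuri3}, and conclude with absorption and the discrete Gronwall lemma.
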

\begin{proof}
Let $n \in \{1, ..., N-1\}$. 
Then we have from the second equation in \ref{Pn} that 
\begin{align*}
&L(z_{n+1} - z_{n}) + hB_{1}z_{n+1} + hA_{2}v_{n+1} 
+ \Phi\varphi_{n+1} - \Phi\varphi_{n} 
+ {\cal L}\varphi_{n+1} - {\cal L}\varphi_{n} 
\notag \\ 
&= B_{2} (\theta_{n+1} - \theta_{n}).   
\end{align*} 
Here, since it holds that  
\begin{align*}
&(L(z_{n+1}-z_{n}), z_{n+1})_{H} 
=(L^{1/2}(z_{n+1}-z_{n}), L^{1/2}z_{n+1})_{H} 
\notag \\ 
&=\frac{1}{2}\|L^{1/2}z_{n+1}\|_{H}^2 - \frac{1}{2}\|L^{1/2}z_{n}\|_{H}^2 
+ \frac{1}{2}\|L^{1/2}(z_{n+1} - z_{n})\|_{H}^2, 
\end{align*}
we see that 
\begin{align}\label{pasta1}
&\frac{1}{2}\|L^{1/2}z_{n+1}\|_{H}^2 - \frac{1}{2}\|L^{1/2}z_{n}\|_{H}^2 
+ \frac{1}{2}\|L^{1/2}(z_{n+1} - z_{n})\|_{H}^2 
+ h\|B_{1}^{1/2}z_{n+1}\|_{H}^2 
\notag \\ 
&+ \langle A_{2}^{*}v_{n+1}, v_{n+1} - v_{n} \rangle_{V^{*}, V} 
  + (v_{n+1}, v_{n+1} - v_{n})_{H} 
\notag \\ 
&= -h\left(\frac{\Phi\varphi_{n+1} - \Phi\varphi_{n}}{h}, z_{n+1} \right)_{H} 
    - h\left(\frac{{\cal L}\varphi_{n+1} - {\cal L}\varphi_{n}}{h}, z_{n+1} \right)_{H} 
\notag \\ 
&\,\quad + (B_{2}(\theta_{n+1}-\theta_{n}), z_{n+1})_{H}
             + h(v_{n+1}, z_{n+1})_{H}. 
\end{align}
On the other hand, the identity  
\begin{align}\label{pasta2}
&\langle A_{2}^{*}v_{n+1}, v_{n+1} - v_{n} \rangle_{V^{*}, V} 
  + (v_{n+1}, v_{n+1} - v_{n})_{H} 
\notag \\ 
&= \frac{1}{2}\langle A_{2}^{*}v_{n+1}, v_{n+1} \rangle_{V^{*}, V} 
     - \frac{1}{2}\langle A_{2}^{*}v_{n}, v_{n} \rangle_{V^{*}, V} 
     + \frac{1}{2}\langle A_{2}^{*}(v_{n+1} - v_{n}), v_{n+1} - v_{n} \rangle_{V^{*}, V} 
\notag \\ 
&\,\quad + \frac{1}{2}\|v_{n+1}\|_{H}^2 - \frac{1}{2}\|v_{n}\|_{H}^2 
             + \frac{1}{2}\|v_{n+1}-v_{n}\|_{H}^2  
\end{align}
holds. 
The condition (A6) and Lemma \ref{lemkuri1} mean that 
there exists a constant $C_{1}=C_{1}(T)>0$ such that 
\begin{align}\label{pasta3}
-h\left(\frac{\Phi\varphi_{n+1} - \Phi\varphi_{n}}{h}, z_{n+1} \right)_{H} 
&\leq C_{\Phi}h(1 + \|\varphi_{n+1}\|_{V}^{p} + \|\varphi_{n}\|_{V}^{q})
                                                                          \|v_{n+1}\|_{V}\|z_{n+1}\|_{H} 
\notag \\ 
&\leq C_{1}h\|v_{n+1}\|_{V}\|z_{n+1}\|_{H} 
\end{align}
for all $h \in (0, h_{2})$. 
Also, the first equation in \ref{Pn} 
and the identity $v_{n+1}-v_{n}=hz_{n+1}$ 
yield that 
\begin{align}\label{pasta3'}
&\frac{1}{2\eta}
   \langle B_{2}^* (\eta v_{n+1} + A_{1}\theta_{n+1}), 
                                                     \eta v_{n+1} + A_{1}\theta_{n+1} \rangle_{V^*, V} 
\notag \\ 
&- \frac{1}{2\eta}
       \langle B_{2}^* (\eta v_{n} + A_{1}\theta_{n}), 
                                                       \eta v_{n} + A_{1}\theta_{n} \rangle_{V^*, V} 
\notag \\ 
&+ \frac{1}{2\eta}
     \langle B_{2}^* (\eta (v_{n+1}-v_{n}) + A_{1}(\theta_{n+1}-\theta_{n})), 
                            \eta (v_{n+1}-v_{n}) + A_{1}(\theta_{n+1}-\theta_{n}) \rangle_{V^*, V} 
\notag \\ 
&= \frac{1}{\eta}
        \langle B_{2}^* (\eta v_{n+1} + A_{1}\theta_{n+1}), 
                          \eta (v_{n+1}-v_{n}) + A_{1}(\theta_{n+1}-\theta_{n}) \rangle_{V^*, V} 
\notag \\ 
&= - (B_{2}(\theta_{n+1} - \theta_{n}), z_{n+1})_{H} 
    - \frac{1}{\eta h}(B_{2}(\theta_{n+1} - \theta_{n}), A_{1}(\theta_{n+1} - \theta_{n}))_{H}.   
\end{align}
Hence it follows from \eqref{pasta1}-\eqref{pasta3'}, (A4) and (A11) 
that there exists a constant $C_{2}=C_{2}(T)>0$ such that 
\begin{align}\label{pasta4}
&\frac{1}{2}\|L^{1/2}z_{n+1}\|_{H}^2 - \frac{1}{2}\|L^{1/2}z_{n}\|_{H}^2 
+ \frac{1}{2}\|L^{1/2}(z_{n+1} - z_{n})\|_{H}^2 
+ h\|B_{1}^{1/2}z_{n+1}\|_{H}^2 
\notag \\ 
&+ \frac{1}{2}\langle A_{2}^{*}v_{n+1}, v_{n+1} \rangle_{V^{*}, V} 
     - \frac{1}{2}\langle A_{2}^{*}v_{n}, v_{n} \rangle_{V^{*}, V} 
     + \frac{1}{2}\langle A_{2}^{*}(v_{n+1} - v_{n}), v_{n+1} - v_{n} \rangle_{V^{*}, V} 
\notag \\ 
&+ \frac{1}{2}\|v_{n+1}\|_{H}^2 - \frac{1}{2}\|v_{n}\|_{H}^2 
             + \frac{1}{2}\|v_{n+1}-v_{n}\|_{H}^2 
\notag \\ 
&+ \frac{1}{2\eta}
   \langle B_{2}^* (\eta v_{n+1} + A_{1}\theta_{n+1}), 
                                                     \eta v_{n+1} + A_{1}\theta_{n+1} \rangle_{V^*, V} 
\notag \\ 
&- \frac{1}{2\eta}
       \langle B_{2}^* (\eta v_{n} + A_{1}\theta_{n}), 
                                                       \eta v_{n} + A_{1}\theta_{n} \rangle_{V^*, V} 
\notag \\ 
&+ \frac{1}{2\eta}
     \langle B_{2}^* (\eta (v_{n+1}-v_{n}) + A_{1}(\theta_{n+1}-\theta_{n})), 
                            \eta (v_{n+1}-v_{n}) + A_{1}(\theta_{n+1}-\theta_{n}) \rangle_{V^*, V} 
\notag \\ 
&\leq C_{2}h\|v_{n+1}\|_{V}\|z_{n+1}\|_{H} 
\end{align}
for all $h \in (0, h_{2})$. 
Then we sum \eqref{pasta4} over $n=1, ..., \ell-1$ with $2 \leq \ell \leq N$ 
to derive that  
\begin{align*}
&\frac{1}{2}\|L^{1/2}z_{\ell}\|_{H}^2 
+ \frac{1}{2}\sum_{n=1}^{\ell-1}\|L^{1/2}(z_{n+1} - z_{n})\|_{H}^2 
+ h\sum_{n=1}^{\ell-1}\|B_{1}^{1/2}z_{n+1}\|_{H}^2 
\notag \\ 
&+ \frac{1}{2}\langle A_{2}^{*}v_{\ell}, v_{\ell} \rangle_{V^{*}, V} 
     + \frac{1}{2}\sum_{n=1}^{\ell-1}
                 \langle A_{2}^{*}(v_{n+1} - v_{n}), v_{n+1} - v_{n} \rangle_{V^{*}, V} 
\notag \\ 
&+ \frac{1}{2}\|v_{\ell}\|_{H}^2  
             + \frac{1}{2}\sum_{n=1}^{\ell-1}\|v_{n+1}-v_{n}\|_{H}^2 
+ \frac{1}{2\eta}
   \langle B_{2}^* (\eta v_{\ell} + A_{1}\theta_{\ell}), 
                                                     \eta v_{\ell} + A_{1}\theta_{\ell} \rangle_{V^*, V} 
\notag \\ 
&\leq \frac{1}{2}\|L^{1/2}z_{1}\|_{H}^2 
+ \frac{1}{2}\langle A_{2}^{*}v_{1}, v_{1} \rangle_{V^{*}, V} 
+ \frac{1}{2}\|v_{1}\|_{H}^2  
+ \frac{1}{2\eta}
   \langle B_{2}^* (\eta v_{1} + A_{1}\theta_{1}), 
                                                     \eta v_{1} + A_{1}\theta_{1} \rangle_{V^*, V}
\notag \\
&\,\quad+ C_{2}h\sum_{n=0}^{\ell-1}\|v_{n+1}\|_{V}\|z_{n+1}\|_{H}.  
\end{align*}
Thus we have from (A2) and (A3) that 
\begin{align}\label{pasta6}
&\frac{c_{L}}{2}\|z_{\ell}\|_{H}^2 
+ h\sum_{n=1}^{\ell-1}\|B_{1}^{1/2}z_{n+1}\|_{H}^2 
+ \frac{\omega_{2, 1}}{2}\|v_{\ell}\|_{V}^2  
             + \frac{\omega_{2, 1}}{2}h^2\sum_{n=1}^{\ell-1}\|z_{n+1}\|_{V}^2 
\notag \\ 
&+ \frac{1}{2\eta}
       \langle B_{2}^* (\eta v_{\ell} + A_{1}\theta_{\ell}), 
                                                     \eta v_{\ell} + A_{1}\theta_{\ell} \rangle_{V^*, V} 
\notag \\ 
&\leq \frac{1}{2}\|L^{1/2}z_{1}\|_{H}^2 
+ \frac{1}{2}\langle A_{2}^{*}v_{1}, v_{1} \rangle_{V^{*}, V} 
+ \frac{1}{2}\|v_{1}\|_{H}^2  
+ \frac{1}{2\eta}
     \langle B_{2}^* (\eta v_{1} + A_{1}\theta_{1}), 
                                                     \eta v_{1} + A_{1}\theta_{1} \rangle_{V^*, V} 
\notag \\
&\,\quad+ C_{2}h\sum_{n=0}^{\ell-1}\|v_{n+1}\|_{V}\|z_{n+1}\|_{H} 
\end{align}
for all $h \in (0, h_{2})$ and $\ell = 2, ..., N$. 
Therefore we infer from \eqref{pasta6}, 
the boundedness of $L$ and $A_{2}^*$, 
and Lemma \ref{lemkuri3} that 
there exists a constant $C_{3}=C_{3}(T)>0$ such that 
\begin{align}\label{pasta7}
&\frac{c_{L}}{2}\|z_{m}\|_{H}^2  
+ h\sum_{n=0}^{m-1}\|B_{1}^{1/2}z_{n+1}\|_{H}^2 
+ \frac{\omega_{2, 1}}{2}\|v_{m}\|_{V}^2  
             + \frac{\omega_{2, 1}}{2}h^2\sum_{n=0}^{m-1}\|z_{n+1}\|_{V}^2 
\notag \\ 
&\leq C_{3} + C_{2}h\sum_{n=0}^{m-1}\|v_{n+1}\|_{V}\|z_{n+1}\|_{H} 
\end{align}
for all $h \in (0, h_{2})$ and $m=1, ..., N$. 
Moreover, we see from \eqref{pasta7} and the Young inequality that 
\begin{align}\label{pasta8}
&\frac{1}{2}(c_{L} - C_{2}h)\|z_{m}\|_{H}^2  
+ h\sum_{n=0}^{m-1}\|B_{1}^{1/2}z_{n+1}\|_{H}^2 
+ \frac{1}{2}(\omega_{2, 1} - C_{2}h)\|v_{m}\|_{V}^2  
\notag \\ 
&+ \frac{\omega_{2, 1}}{2}h^2\sum_{n=0}^{m-1}\|z_{n+1}\|_{V}^2 
\leq C_{3} + \frac{C_{2}}{2}h\sum_{j=0}^{m-1}\|v_{j}\|_{V}^2  
+ \frac{C_{2}}{2}h\sum_{j=0}^{m-1}\|z_{j}\|_{H}^2  
\end{align}
for all $h \in (0, h_{2})$ and $m=1, ..., N$. 
Hence there exist constants $h_{3} \in (0, h_{2})$ and $C_{4}=C_{4}(T)>0$ such that 
\begin{align*}
&\|z_{m}\|_{H}^2 + h\sum_{n=0}^{m-1}\|B_{1}^{1/2}z_{n+1}\|_{H}^2 
+ \|v_{m}\|_{V}^2 + h^2\sum_{n=0}^{m-1}\|z_{n+1}\|_{V}^2 
\notag \\ 
&\leq C_{4} + C_{4}h\sum_{j=0}^{m-1}\|v_{j}\|_{V}^2  
+ C_{4}h\sum_{j=0}^{m-1}\|z_{j}\|_{H}^2  
\end{align*}
for all $h \in (0, h_{3})$ and $m=1, ..., N$. 
Therefore, 
owing to the discrete Gronwall lemma (see e.g., \cite[Prop.\ 2.2.1]{Jerome}), 
there exists a constant $C_{5}=C_{5}(T)>0$ satisfying 
\begin{align*}
\|z_{m}\|_{H}^2 + h\sum_{n=0}^{m-1}\|B_{1}^{1/2}z_{n+1}\|_{H}^2 
+ \|v_{m}\|_{V}^2 + h^2\sum_{n=0}^{m-1}\|z_{n+1}\|_{V}^2 
\leq C_{5} 
\end{align*}
for all $h \in (0, h_{3})$ and $m=1, ..., N$. 
\end{proof}

\begin{lem}\label{lemkuri5}
Let $h_{2}$ be as in Lemma \ref{lemkuri1}. 
Then there exists a constant $C=C(T)>0$ such that 
\begin{align*}
\|\Phi\overline{\varphi}_{h}\|_{L^{\infty}(0, T; H)}  
\leq C 
\end{align*}
for all $h \in (0, h_{2})$. 
\end{lem}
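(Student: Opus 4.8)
The plan is to prove the bound pointwise in $n$, by testing the second equation of \ref{Pn} by $\Phi\varphi_{n+1}$; this is licit because Theorem \ref{maintheorem1} gives $\varphi_{n+1}\in D(B_1)\cap D(A_2)\subset V\subset D(\Phi)$, so $\Phi\varphi_{n+1}\in H$. Taking the $H$-inner product of $Lz_{n+1}+B_1v_{n+1}+A_2\varphi_{n+1}+\Phi\varphi_{n+1}+{\cal L}\varphi_{n+1}=B_2\theta_{n+1}$ with $\Phi\varphi_{n+1}$ produces the term $\|\Phi\varphi_{n+1}\|_H^2$ together with the contributions $(Lz_{n+1},\Phi\varphi_{n+1})_H$, $(B_1v_{n+1},\Phi\varphi_{n+1})_H$, $(A_2\varphi_{n+1},\Phi\varphi_{n+1})_H$, $({\cal L}\varphi_{n+1},\Phi\varphi_{n+1})_H$ and $(B_2\theta_{n+1},\Phi\varphi_{n+1})_H$. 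The strategy is then to discard the non-negative contributions coming from $A_2$ and $B_1$ via the sign conditions, and to dominate all the remaining terms by $\tfrac12\|\Phi\varphi_{n+1}\|_H^2$ plus a constant depending only on $T$, invoking the uniform estimates of Lemmas \ref{lemkuri1}--\ref{lemkuri4}.

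First I would observe that $(A_2\varphi_{n+1},\Phi\varphi_{n+1})_H\ge0$: this follows from the sign condition $(\Phi_\lambda w,A_2w)_H\ge0$ in (A8), together with the convergence $\Phi_\lambda\varphi_{n+1}\to\Phi\varphi_{n+1}$ in $H$ as $\lambda\to+0$ (valid since $\varphi_{n+1}\in D(\Phi)$). For the $B_1$-term I would write $B_1v_{n+1}=\tfrac1h(B_1\varphi_{n+1}-B_1\varphi_n)$ — legitimate, since $\varphi_{n+1},\varphi_n\in D(B_1)$ by Theorem \ref{maintheorem1} and (A12) — then discard the non-negative term $\tfrac1h(B_1\varphi_{n+1},\Phi\varphi_{n+1})_H\ge0$ (again (A8)) and control the residual $\tfrac1h(B_1\varphi_n,\Phi\varphi_{n+1})_H$ using (A8) and the bounds on $\|\overline\varphi_h\|_{L^\infty(0,T;V)}$ and $\|\overline v_h\|_{L^\infty(0,T;V)}$ from Lemmas \ref{lemkuri1} and \ref{lemkuri4}. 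The term $(Lz_{n+1},\Phi\varphi_{n+1})_H$ is handled by the boundedness of $L$ in (A2) and $\|\overline z_h\|_{L^\infty(0,T;H)}\le C$ (Lemma \ref{lemkuri4}); the term $({\cal L}\varphi_{n+1},\Phi\varphi_{n+1})_H$ by the Lipschitz estimate $\|{\cal L}\varphi_{n+1}\|_H\le C_{\cal L}\|\varphi_{n+1}\|_H+\|{\cal L}0\|_H$ from (A11) and $\|\overline\varphi_h\|_{L^\infty(0,T;V)}\le C$; and the term $(B_2\theta_{n+1},\Phi\varphi_{n+1})_H$ by (A5), i.e.\ $\|B_2\theta_{n+1}\|_H\le C_{A_1,B_2}(\|A_1\theta_{n+1}\|_H+\|\theta_{n+1}\|_H)$, together with uniform control of $\|\theta_{n+1}\|_H$ and $\|A_1\theta_{n+1}\|_H$ that follows from auxiliary energy estimates for the first equation of \ref{Pn} — testing it successively by $\theta_{n+1}$ and by $A_1\theta_{n+1}$, using the monotonicity of $A_1$, (A3), the bound on $\|\overline v_h\|_{L^\infty(0,T;H)}$ from Lemma \ref{lemkuri1}, the regularity $\theta_0\in D(A_1)$, $A_1\theta_0\in V$ from (A12), and the discrete Gronwall lemma. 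Collecting everything, absorbing $\tfrac12\|\Phi\varphi_{n+1}\|_H^2$ into the left-hand side, and recalling \eqref{overandunderline} yields $\|\Phi\overline\varphi_h\|_{L^\infty(0,T;H)}\le C(T)$ for all $h\in(0,h_2)$.

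The hard part will be the interaction term $(B_1v_{n+1},\Phi\varphi_{n+1})_H$: when $B_1$ is a bounded operator — as in \ref{P1}, \ref{P2}, \ref{P4} — it is trivially controlled by $\|\overline v_h\|_{L^\infty(0,T;H)}$, but when $B_1$ is unbounded — as in \ref{P3}, \ref{P5} — one must really lean on the structural compatibility encoded in (A4), (A8) and on the $V$-regularity of $\overline\varphi_h$ and $\overline v_h$. The second delicate point is securing a uniform-in-$h$ bound for $\|B_2\theta_{n+1}\|_H$ (equivalently for $\|A_1\theta_{n+1}\|_H$); this is exactly where the extra initial regularity $\theta_0\in D(A_1)$, $A_1\theta_0\in V$ in (A12) enters. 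Everything else is a routine application of the Young inequality and of the estimates already in hand.
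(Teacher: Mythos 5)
Your proposal misses that the lemma is essentially immediate: condition (A6) gives $\Phi(0)=0$ and
$\|\Phi w-\Phi z\|_{H}\leq C_{\Phi}(1+\|w\|_{V}^{p}+\|z\|_{V}^{q})\|w-z\|_{V}$ for all $w,z\in V$, so taking $z=0$ yields $\|\Phi w\|_{H}\leq C_{\Phi}(1+\|w\|_{V}^{p})\|w\|_{V}$, and the uniform bound $\|\overline{\varphi}_{h}\|_{L^{\infty}(0,T;V)}\leq C$ from Lemma \ref{lemkuri1} finishes the proof. This is exactly what the paper does, in one line. No testing of the second equation is needed.

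Beyond being far more complicated than necessary, your argument has a genuine gap at the step you yourself flag as the hard part. After writing $(B_{1}v_{n+1},\Phi\varphi_{n+1})_{H}=\tfrac{1}{h}(B_{1}\varphi_{n+1},\Phi\varphi_{n+1})_{H}-\tfrac{1}{h}(B_{1}\varphi_{n},\Phi\varphi_{n+1})_{H}$ and discarding the first (nonnegative) term, you are left with $-\tfrac{1}{h}(B_{1}\varphi_{n},\Phi\varphi_{n+1})_{H}$, which you claim to control via (A8) and the $V$-bounds on $\overline{\varphi}_{h}$ and $\overline{v}_{h}$. This does not work: when $B_{1}$ is unbounded (as in \ref{P3} and \ref{P5}, where $B_{1}=-\Delta$ on $H^{2}\cap H_{0}^{1}$), $\|B_{1}\varphi_{n}\|_{H}$ is not controlled by $\|\varphi_{n}\|_{V}$, and no uniform $H$-bound on $B_{1}\overline{\varphi}_{h}$ is available at this stage (Lemma \ref{lemkuri7} only bounds $\|B_{1}\overline{v}_{h}\|_{L^{2}(0,T;H)}$, and comes later). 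Even granting such a bound, the explicit factor $1/h$ would destroy uniformity in $h$. Your treatment of $(B_{2}\theta_{n+1},\Phi\varphi_{n+1})_{H}$ likewise requires re-deriving the content of Lemma \ref{lemkuri6} inside this proof. All of this effort is spent avoiding the growth condition in (A6), which is precisely the hypothesis designed to make this lemma trivial.
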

\begin{proof}
We can obtain this lemma by (A6) and Lemma \ref{lemkuri1}. 
\end{proof}

\begin{lem}\label{lemkuri2}
Let $h_{3}$ be as in Lemma \ref{lemkuri4}. 
Then there exist constants $h_{4} \in (0, h_{3})$ and $C=C(T)>0$ such that  
\begin{align*}
\Bigl\|\dfrac{d\widehat{\theta}_{h}}{dt}\Bigr\|_{L^2(0, T; H)}^2 
+ h\Bigl\|\dfrac{d\widehat{\theta}_{h}}{dt}\Bigr\|_{L^2(0, T; V)}^2 
+ \|\overline{\theta}_{h}\|_{L^{\infty}(0, T; V)}^2 
\leq C 
\end{align*}
for all $h \in (0, h_{4})$. 
\end{lem}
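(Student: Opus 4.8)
The plan is to derive the three bounds by energy arguments on \ref{Pn}: testing the first equation by $\delta_h\theta_n$ produces the $L^2(0,T;H)$-estimate for $d\widehat{\theta}_h/dt$ and, through the coercivity in (A3), the $L^\infty(0,T;V)$-estimate for $\overline{\theta}_h$; taking the first difference of that equation in $n$ and testing it again by $\delta_h\theta_n$ produces the $h$-weighted $L^2(0,T;V)$-estimate for $d\widehat{\theta}_h/dt$, the index $n=0$ being treated separately. Two preliminary bounds are needed first. Writing the first equation in \ref{Pn} as $\theta_{n+1}=(I+hA_1)^{-1}(\theta_n-\eta h v_{n+1})$ and using that $(I+hA_1)^{-1}$ is a linear contraction on $H$ together with $\|\overline{v}_h\|_{L^\infty(0,T;H)}\le C$ from Lemma \ref{lemkuri1}, one gets $\|\theta_m\|_H\le\|\theta_0\|_H+\eta T\|\overline{v}_h\|_{L^\infty(0,T;H)}\le C$ for $m=0,\dots,N$. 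Moreover, from $\theta_1=(I+hA_1)^{-1}(\theta_0-\eta h v_1)$, the commutation $A_1(I+hA_1)^{-1}=(I+hA_1)^{-1}A_1$ on $D(A_1)$, the identity $hA_1(I+hA_1)^{-1}=I-(I+hA_1)^{-1}$, assumption (A12) (so $A_1\theta_0\in V\subset H$) and Lemma \ref{lemkuri1}, one obtains $\|A_1\theta_1\|_H\le\|A_1\theta_0\|_H+2\eta\|v_1\|_H\le C$, whence $\|\delta_h\theta_0\|_H=\|A_1\theta_1+\eta v_1\|_H\le C$ since $\delta_h\theta_0+\eta v_1+A_1\theta_1=0$.

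First I would test the first equation in \ref{Pn} by $\delta_h\theta_n$, rewrite $(A_1\theta_{n+1},\delta_h\theta_n)_H=\tfrac1h\langle A_1^{*}\theta_{n+1},\theta_{n+1}-\theta_n\rangle_{V^{*},V}$ via (A3) and expand it by the identity
\[
\langle A_1^{*}\theta_{n+1},\theta_{n+1}-\theta_n\rangle_{V^{*},V}=\tfrac12\langle A_1^{*}\theta_{n+1},\theta_{n+1}\rangle_{V^{*},V}-\tfrac12\langle A_1^{*}\theta_n,\theta_n\rangle_{V^{*},V}+\tfrac12\langle A_1^{*}(\theta_{n+1}-\theta_n),\theta_{n+1}-\theta_n\rangle_{V^{*},V},
\]
estimate $\eta(v_{n+1},\delta_h\theta_n)_H$ by Young's inequality using $\|v_{n+1}\|_H\le C$ (Lemma \ref{lemkuri1}), then multiply by $h$ and sum over $n=0,\dots,m-1$. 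Dropping the nonnegative term coming from the monotonicity of $A_1^{*}$ and using $\theta_0\in D(A_1)$, this gives
\[
\frac h2\sum_{n=0}^{m-1}\|\delta_h\theta_n\|_H^2+\frac12\langle A_1^{*}\theta_m,\theta_m\rangle_{V^{*},V}\le\frac12\langle A_1^{*}\theta_0,\theta_0\rangle_{V^{*},V}+CT\qquad(1\le m\le N).
\]
Recalling \eqref{hat1}--\eqref{overandunderline}, the first term yields $\|d\widehat{\theta}_h/dt\|_{L^2(0,T;H)}^2\le C$, while $\langle A_1^{*}\theta_m,\theta_m\rangle_{V^{*},V}\le C$ combined with $\|\theta_m\|_H\le C$ and (A3) (with $\alpha=1$) yields $\|\theta_m\|_V^2\le\omega_{1,1}^{-1}C$, i.e.\ $\|\overline{\theta}_h\|_{L^\infty(0,T;V)}^2\le C$.

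For the last term I would subtract the first equation in \ref{Pn} at step $n-1$ from the one at step $n$ ($n=1,\dots,N-1$); since $\varphi_{n+1}-\varphi_n=hv_{n+1}$, $v_{n+1}-v_n=hz_{n+1}$ and $\theta_{n+1}-\theta_n=h\delta_h\theta_n$, this reads $\delta_h\theta_n-\delta_h\theta_{n-1}+\eta h z_{n+1}+hA_1(\delta_h\theta_n)=0$. Testing by $\delta_h\theta_n$, using the quadratic identity for $(\delta_h\theta_n-\delta_h\theta_{n-1},\delta_h\theta_n)_H$, the coercivity $h(A_1(\delta_h\theta_n),\delta_h\theta_n)_H\ge h\omega_{1,1}\|\delta_h\theta_n\|_V^2-h\|\delta_h\theta_n\|_H^2$ from (A3), and Young's inequality on $\eta h(z_{n+1},\delta_h\theta_n)_H$ with $\|z_{n+1}\|_H\le C$ from Lemma \ref{lemkuri4}, then multiplying by $h$ and summing over $n=1,\dots,m-1$, I would absorb $h\sum_n\|\delta_h\theta_n\|_H^2\le\|d\widehat{\theta}_h/dt\|_{L^2(0,T;H)}^2\le C$ from the previous step and use $\|\delta_h\theta_0\|_H\le C$ to obtain $h\sum_{n=1}^{N-1}\|\delta_h\theta_n\|_V^2\le C$. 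Consequently
\[
h\Bigl\|\frac{d\widehat{\theta}_h}{dt}\Bigr\|_{L^2(0,T;V)}^2=h^2\sum_{n=0}^{N-1}\|\delta_h\theta_n\|_V^2=\|\theta_1-\theta_0\|_V^2+h\Bigl(h\sum_{n=1}^{N-1}\|\delta_h\theta_n\|_V^2\Bigr)\le C,
\]
where $\|\theta_1-\theta_0\|_V\le2\|\overline{\theta}_h\|_{L^\infty(0,T;V)}\le C$ by the previous paragraph and $h<h_3$; one may take $h_4:=h_3$, or shrink it to some $h_4\in(0,h_3)$ if an absorption step requires it.

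I expect the index $n=0$ to be the main obstacle. The quantity $\|\theta_1-\theta_0\|_V^2$ does not become small with $h$, so it can be accommodated only because the left-hand side of the lemma carries the factor $h$ in front of $\|d\widehat{\theta}_h/dt\|_{L^2(0,T;V)}^2$; and the telescoped boundary term $\|\delta_h\theta_0\|_H^2$ in the difference estimate forces the auxiliary $H$-bound on $A_1\theta_1$, which is precisely where assumption (A12) and the commutation of $A_1$ with its resolvent are used.
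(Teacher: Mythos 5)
Your proof is correct, but it reaches the $h$-weighted $V$-estimate for $\frac{d\widehat{\theta}_h}{dt}$ by a genuinely different (and longer) route than the paper. The paper performs only your first energy step: it tests the first equation of \ref{Pn} by $\theta_{n+1}-\theta_n$ \emph{and} by $h\theta_{n+1}$, and instead of discarding the nonnegative increment term it keeps
\[
\tfrac12\langle A_1^{*}(\theta_{n+1}-\theta_n),\theta_{n+1}-\theta_n\rangle_{V^{*},V}
+\tfrac12\|\theta_{n+1}-\theta_n\|_H^2
\;\ge\;\tfrac{\omega_{1,1}}{2}\,h^2\|\delta_h\theta_n\|_V^2 ,
\]
whose sum over $n$ is exactly $\tfrac{\omega_{1,1}}{2}\,h\|\frac{d\widehat{\theta}_h}{dt}\|_{L^2(0,T;V)}^2$; all three bounds of the lemma then follow from that single identity together with Lemma \ref{lemkuri1} and the discrete Gronwall lemma (the paper controls $\|\theta_m\|_H$ through the telescoped $\tfrac12\|\theta_{n+1}\|_H^2-\tfrac12\|\theta_n\|_H^2$ term rather than through the contractivity of $(I+hA_1)^{-1}$ as you do --- both are fine, and your variant even avoids Gronwall in this step). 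By dropping that increment term you force yourself into the second stage: differencing the equation in $n$, invoking the bound on $\|\overline{z}_h\|_{L^\infty(0,T;H)}$ from Lemma \ref{lemkuri4}, the auxiliary bound $\|A_1\theta_1\|_H\le C$ via (A12) and the resolvent commutation, and a separate treatment of the index $n=0$. All of these steps check out (your identity $\delta_h\theta_n-\delta_h\theta_{n-1}+\eta h z_{n+1}+hA_1\delta_h\theta_n=0$ is correct, and $\delta_h\theta_n\in D(A_1)$ so the coercivity from (A3) applies), and as a by-product you obtain the stronger estimate $h\sum_{n\ge1}\|\delta_h\theta_n\|_V^2\le C$ for the indices $n\ge1$, which the paper's argument does not give; the price is the extra dependence on Lemma \ref{lemkuri4} and (A12), which the paper's proof of this particular lemma does not need. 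One cosmetic slip: $\|\theta_0\|_V$ is not majorized by $\|\overline{\theta}_h\|_{L^\infty(0,T;V)}$ (the piecewise constant interpolant takes the values $\theta_1,\dots,\theta_N$), but since $\theta_0\in D(A_1)\subset V$ is fixed data this does not affect the bound $\|\theta_1-\theta_0\|_V\le C$.
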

\begin{proof}
We multiply the first equation in \ref{Pn} 
by $\theta_{n+1}-\theta_{n}$ 
and by $h\theta_{n+1}$, respectively, 
and use the Young inequality to obtain that 
\begin{align}\label{hoho1}
&h\left\|\frac{\theta_{n+1}-\theta_{n}}{h}\right\|_{H}^2 
+ \langle A_{1}^{*}\theta_{n+1}, \theta_{n+1}-\theta_{n} \rangle_{V^{*}, V} 
+ (\theta_{n+1}-\theta_{n}, \theta_{n+1})_{H} 
+ h(A_{1}\theta_{n+1}, \theta_{n+1})_{H} 
\notag \\ 
&= -\eta h\left(v_{n+1}, \frac{\theta_{n+1}-\theta_{n}}{h}\right)_{H} 
     -\eta h (v_{n+1}, \theta_{n+1})_{H} 
\notag \\ 
&\leq  \eta^2 h\|v_{n+1}\|_{H}^2  
+ \frac{1}{2}h\left\|\frac{\theta_{n+1}-\theta_{n}}{h}\right\|_{H}^2 
+ \frac{1}{2}h\|\theta_{n+1}\|_{H}^2.  
\end{align}
Here it holds that 
\begin{align}\label{hoho2}
&\langle A_{1}^{*}\theta_{n+1}, \theta_{n+1} - \theta_{n} \rangle_{V^{*}, V} 
  + (\theta_{n+1}-\theta_{n}, \theta_{n+1})_{H} 
\notag \\ 
&= \frac{1}{2}\langle A_{1}^{*}\theta_{n+1}, \theta_{n+1} \rangle_{V^{*}, V} 
     - \frac{1}{2}\langle A_{1}^{*}\theta_{n}, \theta_{n} \rangle_{V^{*}, V} 
     + \frac{1}{2}\langle A_{1}^{*}(\theta_{n+1} - \theta_{n}), 
                                                           \theta_{n+1} - \theta_{n} \rangle_{V^{*}, V} 
\notag \\ 
&\,\quad + \frac{1}{2}\|\theta_{n+1}\|_{H}^2 - \frac{1}{2}\|\theta_{n}\|_{H}^2 
             + \frac{1}{2}\|\theta_{n+1}-\theta_{n}\|_{H}^2.   
\end{align}
Thus we see from \eqref{hoho1}, \eqref{hoho2} 
and the continuity of the embedding $V \hookrightarrow H$ 
that 
there exists a constant $C_{1}>0$ such that  
\begin{align}\label{hoho3}
&\frac{1}{2}h\left\|\frac{\theta_{n+1}-\theta_{n}}{h}\right\|_{H}^2 
+ \frac{1}{2}\langle A_{1}^{*}\theta_{n+1}, \theta_{n+1} \rangle_{V^{*}, V} 
     - \frac{1}{2}\langle A_{1}^{*}\theta_{n}, \theta_{n} \rangle_{V^{*}, V} 
\notag \\ 
&+ \frac{1}{2}\langle A_{1}^{*}(\theta_{n+1}-\theta_{n}), 
                                                  \theta_{n+1}-\theta_{n} \rangle_{V^{*}, V} 
+ \frac{1}{2}\|\theta_{n+1}\|_{H}^2 - \frac{1}{2}\|\theta_{n}\|_{H}^2 
             + \frac{1}{2}\|\theta_{n+1}-\theta_{n}\|_{H}^2
\notag \\ 
&\leq \eta^2 h\|v_{n+1}\|_{H}^2 + C_{1}h\|\theta_{n+1}\|_{V}^2  
\end{align} 
for all $h \in (0, h_{3})$. 
Therefore we can prove Lemma \ref{lemkuri2} 
by summing \eqref{hoho3} over $n=0, ..., m-1$ with $1 \leq m \leq N$, 
the condition (A3), Lemma \ref{lemkuri1} 
and the discrete Gronwall lemma (see e.g., \cite[Prop.\ 2.2.1]{Jerome}).     
\end{proof}

\begin{lem}\label{lemkuri6}
Let $h_{4}$ be as in Lemma \ref{lemkuri2}. 
Then there exists a constant $C=C(T)>0$ such that  
\begin{align*} 
\Bigl\|\frac{d\widehat{\theta}_{h}}{dt}\Bigr\|_{L^2(0, T; V)}^2 
+ \|A_{1}\overline{\theta}_{h}\|_{L^{\infty}(0, T; H)}^2 
\leq C 
\end{align*}
for all $h \in (0, h_{4})$. 
\end{lem}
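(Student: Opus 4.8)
The plan is to deduce both bounds from a single discrete energy estimate for $\|A_1\theta_{n+1}\|_H^2$, obtained by testing the discrete heat equation one level higher than in Lemma~\ref{lemkuri2}, namely against the increment $A_1\theta_{n+1}-A_1\theta_n$. First I would note that by Theorem~\ref{maintheorem1} and (A12) every $\theta_j$ lies in $D(A_1)$ for $j=0,\dots,N$, so $\delta_h\theta_n\in D(A_1)$ and $A_1\theta_{n+1}-A_1\theta_n=hA_1\delta_h\theta_n\in H$ is an admissible test element in the first equation of \ref{Pn}, that is, in $\delta_h\theta_n+\eta v_{n+1}+A_1\theta_{n+1}=0$ (recall $v_{n+1}=\delta_h\varphi_n$). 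Taking the $H$-inner product and using \eqref{deltah} gives, for $n=0,\dots,N-1$,
\[
h\langle A_1^*\delta_h\theta_n,\delta_h\theta_n\rangle_{V^*,V}
+\eta h(v_{n+1},A_1\delta_h\theta_n)_H
+\tfrac12\|A_1\theta_{n+1}\|_H^2-\tfrac12\|A_1\theta_n\|_H^2+\tfrac12\|A_1\theta_{n+1}-A_1\theta_n\|_H^2=0 .
\]

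Next I would estimate the three contributions. For the first I would invoke (A3) with $\alpha=1$, so that $\langle A_1^*\delta_h\theta_n,\delta_h\theta_n\rangle_{V^*,V}\ge\omega_{1,1}\|\delta_h\theta_n\|_V^2-\|\delta_h\theta_n\|_H^2$; this produces the target coercive quantity $h\|\delta_h\theta_n\|_V^2$ at the price of the harmless term $h\|\delta_h\theta_n\|_H^2$. The two $\|A_1\theta_\bullet\|_H^2$ terms telescope upon summation, and $\tfrac12\|A_1\theta_{n+1}-A_1\theta_n\|_H^2\ge0$ is simply discarded. The genuinely delicate term is the coupling term $\eta h(v_{n+1},A_1\delta_h\theta_n)_H$: since no a~priori bound on $A_1\delta_h\theta_n$ in $H$ is available at this stage, I would not keep it in $H$, but instead pass to the $V$--$V^*$ duality, writing $(v_{n+1},A_1\delta_h\theta_n)_H=\langle A_1^*\delta_h\theta_n,v_{n+1}\rangle_{V^*,V}=\langle A_1^*v_{n+1},\delta_h\theta_n\rangle_{V^*,V}$ by the identities in (A3) (equality of $A_1$ and $A_1^*$ on $D(A_1)$, and symmetry of $A_1^*$); then $\eta h|(v_{n+1},A_1\delta_h\theta_n)_H|\le\eta h\|A_1^*\|_{\mathcal L(V,V^*)}\|v_{n+1}\|_V\|\delta_h\theta_n\|_V$, and a Young inequality absorbs $\tfrac{\omega_{1,1}}{2}h\|\delta_h\theta_n\|_V^2$ into the left-hand side while leaving a remainder of the form $Ch\|v_{n+1}\|_V^2$.

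Collecting these estimates yields
\[
\tfrac12\|A_1\theta_{n+1}\|_H^2-\tfrac12\|A_1\theta_n\|_H^2+\tfrac{\omega_{1,1}}{2}h\|\delta_h\theta_n\|_V^2\le h\|\delta_h\theta_n\|_H^2+Ch\|v_{n+1}\|_V^2 ,
\]
and summing over $n=0,\dots,m-1$ for $1\le m\le N$ gives
\[
\tfrac12\|A_1\theta_m\|_H^2+\tfrac{\omega_{1,1}}{2}h\sum_{n=0}^{m-1}\|\delta_h\theta_n\|_V^2\le\tfrac12\|A_1\theta_0\|_H^2+h\sum_{n=0}^{N-1}\|\delta_h\theta_n\|_H^2+Ch\sum_{n=0}^{N-1}\|v_{n+1}\|_V^2 .
\]
Here $\|A_1\theta_0\|_H<\infty$ by (A12); $h\sum_{n=0}^{N-1}\|\delta_h\theta_n\|_H^2=\|\frac{d\widehat{\theta}_h}{dt}\|_{L^2(0,T;H)}^2\le C$ by Lemma~\ref{lemkuri2}; and $h\sum_{n=0}^{N-1}\|v_{n+1}\|_V^2=\|\overline{v}_h\|_{L^2(0,T;V)}^2\le T\|\overline{v}_h\|_{L^\infty(0,T;V)}^2\le C$ by Lemma~\ref{lemkuri4}. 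Hence $\|A_1\theta_m\|_H^2\le C$ for every $m=1,\dots,N$ and $h\sum_{n=0}^{N-1}\|\delta_h\theta_n\|_V^2\le C$, which, by \eqref{hat1} and \eqref{overandunderline}, are exactly $\|A_1\overline{\theta}_h\|_{L^\infty(0,T;H)}^2\le C$ and $\|\frac{d\widehat{\theta}_h}{dt}\|_{L^2(0,T;V)}^2\le C$. Note that, unlike in the preceding lemmas, no discrete Gronwall argument is needed, since the two sums on the right are already controlled; and testing directly against $A_1\theta_{n+1}-A_1\theta_n$ (rather than first differencing the heat equation in $n$) automatically covers the index $n=0$, so no separate treatment of the initial step is required. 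The one place I would be most careful is precisely the coupling term: one must resist estimating $(v_{n+1},A_1(\theta_{n+1}-\theta_n))_H$ by Cauchy--Schwarz in $H$, and instead use the symmetry and $V\to V^*$ boundedness of $A_1^*$ together with the $L^\infty(0,T;V)$ bound on $\overline{v}_h$ supplied by Lemma~\ref{lemkuri4}.
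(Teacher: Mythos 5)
Your proof is correct and follows essentially the same route as the paper: the paper's argument also tests the first equation of \ref{Pn} by $A_{1}(\theta_{n+1}-\theta_{n})$, uses the coercivity in (A3) together with the boundedness of $A_{1}^{*}:V\to V^{*}$ and the Young inequality on the coupling term $\eta h\langle A_{1}^{*}\delta_{h}\theta_{n},v_{n+1}\rangle_{V^{*},V}$, telescopes the $\|A_{1}\theta_{\bullet}\|_{H}^{2}$ terms, and closes with Lemmas \ref{lemkuri2} and \ref{lemkuri4} without any Gronwall step. The only cosmetic difference is that you flip the duality pairing onto $A_{1}^{*}v_{n+1}$ via symmetry before applying boundedness, which is harmless but unnecessary.
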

\begin{proof}
It follows from the first equation in \ref{Pn} that 
\begin{align*}
&h\Bigl\langle A_{1}^* \frac{\theta_{n+1}-\theta_{n}}{h}, 
                                      \frac{\theta_{n+1}-\theta_{n}}{h} \Bigr\rangle_{V^*, V} 
+ h\Bigl\|\frac{\theta_{n+1}-\theta_{n}}{h}\Bigr\|_{H}^2 
\\ 
&+ \frac{1}{2}\|A_{1}\theta_{n+1}\|_{H}^2 - \frac{1}{2}\|A_{1}\theta_{n}\|_{H}^2 
  + \frac{1}{2}\|A_{1}(\theta_{n+1}-\theta_{n})\|_{H}^2 
\\ 
&= -\eta h\Bigl\langle A_{1}^* \frac{\theta_{n+1}-\theta_{n}}{h}, 
                                                                      v_{n+1} \Bigr\rangle_{V^*, V} 
+ h\Bigl\|\frac{\theta_{n+1}-\theta_{n}}{h}\Bigr\|_{H}^2  
\end{align*}
and then we can prove this lemma 
by (A3), the boundedness of the operator $A_{1}^{*} : V \to V^*$, 
the Young inequality, Lemma \ref{lemkuri4}, 
summing over $n=0, ..., m-1$ with $1 \leq m \leq N$ 
and Lemma \ref{lemkuri2}.  
\end{proof}

\begin{lem}\label{lemkuri7}
Let $h_{4}$ be as in Lemma \ref{lemkuri2}. 
Then there exists a constant $C=C(T)>0$ such that 
\begin{align*}
\|B_{2}\overline{\theta}_{h}\|_{L^{\infty}(0, T; H)}^2  
+ \|B_{1}\overline{v}_{h}\|_{L^2(0, T; H)}^2 
+ \|A_{2}\overline{\varphi}_{h}\|_{L^2(0, T; H)}^2 
\leq C 
\end{align*}
for all $h \in (0, h_{4})$.
\end{lem}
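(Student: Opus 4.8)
The plan is to extract the three desired bounds directly from the two equations in \ref{Pn}, using the a priori estimates already in hand (Lemmas \ref{lemkuri1}, \ref{lemkuri4}, \ref{lemkuri5}, \ref{lemkuri6}) together with assumption (A5), which compares $B_2\theta$ with $A_1\theta$ and $\theta$ in the $H$-norm. The first step is to control $\|B_2\overline{\theta}_h\|_{L^\infty(0,T;H)}$: for each $n$ we simply write, by (A5), $\|B_2\theta_{n+1}\|_H \le C_{A_1,B_2}(\|A_1\theta_{n+1}\|_H + \|\theta_{n+1}\|_H)$, and the right-hand side is uniformly bounded thanks to Lemma \ref{lemkuri6} (which bounds $\|A_1\overline{\theta}_h\|_{L^\infty(0,T;H)}$) and Lemma \ref{lemkuri4} (which bounds $\|\overline{\theta}_h\|_{L^\infty(0,T;V)}$, hence in $H$). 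This immediately gives the first term with a constant $C=C(T)>0$ independent of $h\in(0,h_4)$.

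Next I would bound $\|B_1\overline{v}_h\|_{L^2(0,T;H)}^2$ and $\|A_2\overline{\varphi}_h\|_{L^2(0,T;H)}^2$ simultaneously by testing the second equation in \ref{Pn} against $B_1 v_{n+1}$ and against $A_2\varphi_{n+1}$ (equivalently, rearranging the equation $Lz_{n+1} + B_1 v_{n+1} + A_2\varphi_{n+1} = B_2\theta_{n+1} - \Phi\varphi_{n+1} - {\cal L}\varphi_{n+1}$ and computing $\|B_1 v_{n+1} + A_2\varphi_{n+1}\|_H^2$). Expanding that square produces the cross term $2(B_1 v_{n+1}, A_2\varphi_{n+1})_H$; here the mismatch $\varphi_{n+1}$ versus $v_{n+1}$ is resolved using $\varphi_{n+1}-\varphi_n = h v_{n+1}$, so $(B_1 v_{n+1}, A_2\varphi_{n+1})_H = (B_1 v_{n+1}, A_2\varphi_n)_H + h(B_1 v_{n+1}, A_2 v_{n+1})_H \ge (B_1 v_{n+1}, A_2\varphi_n)_H$ by the nonnegativity in (A4). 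The remaining terms on the right — involving $L z_{n+1}$ (bounded in $H$ by Lemma \ref{lemkuri4}), $B_2\theta_{n+1}$ (just bounded above), $\Phi\varphi_{n+1}$ (bounded in $L^\infty(0,T;H)$ by Lemma \ref{lemkuri5}), and ${\cal L}\varphi_{n+1}$ (controlled by (A11) and Lemma \ref{lemkuri1}) — are all handled by the Young inequality, absorbing a small multiple of $\|B_1 v_{n+1}\|_H^2 + \|A_2\varphi_{n+1}\|_H^2$ into the left. One then multiplies by $h$, sums over $n=0,\dots,N-1$, and uses that the telescoped/accumulated right-hand side is $O(1)$, yielding the $L^2(0,T;H)$ bounds. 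The symmetry condition $(B_1 w, A_2 z)_H = (B_1 z, A_2 w)_H$ from (A4) may be needed to turn the $(B_1 v_{n+1}, A_2\varphi_n)_H$ term into a telescoping quantity in $n$ rather than an obstruction.

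The main obstacle I anticipate is precisely that cross term $(B_1 v_{n+1}, A_2\varphi_n)_H$: one must avoid differentiating $B_1$ or $A_2$ against the wrong time level, and the clean way is to recognize $(B_1 v_{n+1}, A_2\varphi_n)_H$ as producing, after using $v_{n+1}=\delta_h\varphi_n$ and the symmetry of $(B_1\cdot,A_2\cdot)_H$, a discrete derivative of $\tfrac12(B_1\varphi_n,A_2\varphi_n)_H$ plus a nonnegative remainder (the same device used for the $L^{1/2}$, $A_2^*$ terms in Lemmas \ref{lemkuri1}--\ref{lemkuri4}); the boundary value $(B_1\varphi_0,A_2\varphi_0)_H$ is finite by (A12). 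With that identification the sum over $n$ is genuinely telescoping and bounded, and no Gronwall argument is even needed — everything reduces to the already-established uniform estimates. I would therefore present the proof as: (i) the (A5) bound for $B_2\overline\theta_h$; (ii) rearrange the second equation and square; (iii) treat the cross term via $\varphi_{n+1}-\varphi_n=hv_{n+1}$, (A4) and its symmetry; (iv) Young's inequality on the remaining terms and absorption; (v) multiply by $h$, sum, and invoke Lemmas \ref{lemkuri1}, \ref{lemkuri4}, \ref{lemkuri5}, \ref{lemkuri6}.
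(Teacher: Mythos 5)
Your proposal is correct and follows essentially the same route as the paper: the $B_{2}\overline{\theta}_{h}$ bound via (A5) together with Lemma \ref{lemkuri6} (note the $L^{\infty}(0,T;V)$ bound on $\overline{\theta}_{h}$ comes from Lemma \ref{lemkuri2}, not Lemma \ref{lemkuri4}), and then the $B_{1}\overline{v}_{h}$ and $A_{2}\overline{\varphi}_{h}$ bounds from the second equation of \ref{Pn}, with the cross term $(B_{1}v_{n+1},A_{2}\varphi_{n+1})_{H}$ turned into a telescoping quantity plus sign-definite remainders using the symmetry and nonnegativity in (A4) and the initial condition (A12). The only cosmetic difference is that you expand $\|B_{1}v_{n+1}+A_{2}\varphi_{n+1}\|_{H}^{2}$ to obtain both bounds simultaneously, whereas the paper first tests with $hB_{1}v_{n+1}$ to bound $B_{1}\overline{v}_{h}$ and then reads off the $A_{2}\overline{\varphi}_{h}$ bound by comparison from the equation.
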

\begin{proof}
The condition (A5), Lemmas \ref{lemkuri2} and \ref{lemkuri6} mean that 
there exists a constant $C_{1}=C_{1}(T)>0$ such that 
\begin{align}\label{pen0}
\|B_{2}\overline{\theta}_{h}\|_{L^{\infty}(0, T; H)}^2  \leq C_{1}
\end{align}
for all $h \in (0, h_{4})$. 
The second equation in \ref{Pn} yields that 
\begin{align*}
&h\|B_{1}v_{n+1}\|_{H}^2 
= h(B_{1}v_{n+1}, B_{1}v_{n+1})_{H} 
\notag \\ 
&=-h(Lz_{n+1}, B_{1}v_{n+1})_{H} 
    - h(A_{2}\varphi_{n+1}, B_{1}v_{n+1})_{H} 
    - h(\Phi\varphi_{n+1}, B_{1}v_{n+1})_{H} 
\notag \\ 
&\,\quad- h({\cal L}\varphi_{n+1}, B_{1}v_{n+1})_{H} 
    + h(B_{2}\theta_{n+1}, B_{1}v_{n+1})_{H}    
\end{align*}
and then we derive from the Young inequality, 
the boundedness of the operator $L : H \to H$, 
(A11) 
and Lemma \ref{lemkuri1} 
that 
there exists a constant $C_{2}=C_{2}(T)>0$ satisfying 
\begin{align}\label{pen1}
h\|B_{1}v_{n+1}\|_{H}^2  
&\leq C_{2}h\|z_{n+1}\|_{H}^2 - h(A_{2}\varphi_{n+1}, B_{1}v_{n+1})_{H}  
       + C_{2}h\|\Phi\varphi_{n+1}\|_{H}^2 
\notag \\ 
&\,\quad+ C_{2}h\|B_{2}\theta_{n+1}\|_{H}^2 + C_{2}h 
\end{align}
for all $h \in (0, h_{4})$. 
Here we have from (A4) that 
\begin{align}\label{pen2}
-h(A_{2}\varphi_{n+1}, B_{1}v_{n+1})_{H}  
&= -(A_{2}\varphi_{n+1}, B_{1}\varphi_{n+1}-B_{1}\varphi_{n})_{H} 
\notag \\ 
&= - \frac{1}{2}(A_{2}\varphi_{n+1}, B_{1}\varphi_{n+1})_{H} 
     + \frac{1}{2}(A_{2}\varphi_{n}, B_{1}\varphi_{n})_{H} 
\notag \\ 
&\,\quad- \frac{1}{2}
                      (A_{2}(\varphi_{n+1}-\varphi_{n}), B_{1}(\varphi_{n+1}-\varphi_{n}))_{H}.
\end{align}
Thus summing \eqref{pen1} over $n=0, ..., m-1$ with $1 \leq m \leq N$ 
and using \eqref{pen0}, \eqref{pen2}, 
Lemmas \ref{lemkuri4} and \ref{lemkuri5} imply that 
there exists a constant $C_{3} = C_{3}(T) > 0$ such that 
\begin{align}\label{pen3}
\|B_{1}\overline{v}_{h}\|_{L^2(0, T; H)}^2 \leq C_{3}
\end{align}
for all $h \in (0, h_{4})$. 
Moreover, we deduce from the second equation in \ref{Ph}, \eqref{pen0}, \eqref{pen3}, 
Lemmas \ref{lemkuri4} and \ref{lemkuri5}, 
(A11) and Lemma \ref{lemkuri1} that 
there exists a constant $C_{4} = C_{4}(T) > 0$ satisfying  
\begin{align*}
\|A_{2}\overline{\varphi}_{h}\|_{L^2(0, T; H)}^2 \leq C_{4}
\end{align*}
for all $h \in (0, h_{4})$.   
\end{proof}

\begin{lem}\label{lemkuri8}
Let $h_{4}$ be as in Lemma \ref{lemkuri2}. 
Then there exists a constant $C=C(T)>0$ such that 
\begin{align*}
&\|\widehat{\varphi}_{h}\|_{W^{1, \infty}(0, T; V)}  
+ \|\widehat{v}_{h}\|_{W^{1, \infty}(0, T; H)}  
\notag \\ 
&+ \|\widehat{v}_{h}\|_{L^{\infty}(0, T; V)}  
+ \|\widehat{\theta}_{h}\|_{H^1(0, T; V)}  
+ \|\widehat{\theta}_{h}\|_{L^{\infty}(0, T; V)}   
\leq C 
\end{align*}
for all $h \in (0, h_{4})$. 
\end{lem}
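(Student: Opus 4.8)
The plan is to obtain the bound purely as a corollary of the a priori estimates already proved in Lemmas \ref{lemkuri1}, \ref{lemkuri2}, \ref{lemkuri4} and \ref{lemkuri6}, by translating each estimate for the piecewise-constant interpolants $\overline{\varphi}_h$, $\overline{v}_h$, $\overline{z}_h$, $\overline{\theta}_h$ into an estimate for the piecewise-linear interpolants $\widehat{\varphi}_h$, $\widehat{v}_h$, $\widehat{\theta}_h$ through the definitions \eqref{hat1}--\eqref{overandunderline} and the elementary identities \eqref{rem1}--\eqref{rem6}. Since $h_{4} < h_{3} < h_{2} < h_{0}$, every one of those lemmas is valid on $(0, h_{4})$, so it suffices to add the resulting pieces and absorb all the $T$-dependence into one constant $C = C(T)$.

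First I would handle the $\varphi$- and $v$-contributions. By \eqref{hat1}, $\tfrac{d\widehat{\varphi}_h}{dt} = \delta_h\varphi_n = \overline{v}_h$ in $(0,T)$, so $\|\tfrac{d\widehat{\varphi}_h}{dt}\|_{L^{\infty}(0,T;V)} = \|\overline{v}_h\|_{L^{\infty}(0,T;V)}$, which is controlled by Lemma \ref{lemkuri4}; combined with \eqref{rem1}, condition (A12) and Lemma \ref{lemkuri1}, which bound $\|\widehat{\varphi}_h\|_{L^{\infty}(0,T;V)} = \max\{\|\varphi_0\|_V,\|\overline{\varphi}_h\|_{L^{\infty}(0,T;V)}\}$, this yields $\|\widehat{\varphi}_h\|_{W^{1,\infty}(0,T;V)} \le C$. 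Likewise, \eqref{hat2} gives $\tfrac{d\widehat{v}_h}{dt} = \delta_h v_n = \overline{z}_h$, so $\|\tfrac{d\widehat{v}_h}{dt}\|_{L^{\infty}(0,T;H)} = \|\overline{z}_h\|_{L^{\infty}(0,T;H)}$ is bounded by Lemma \ref{lemkuri4}, while \eqref{rem2} together with (A12) and Lemma \ref{lemkuri4} bound $\|\widehat{v}_h\|_{L^{\infty}(0,T;V)} = \max\{\|v_0\|_V,\|\overline{v}_h\|_{L^{\infty}(0,T;V)}\}$; the continuity of the embedding $V \hookrightarrow H$ then controls $\|\widehat{v}_h\|_{L^{\infty}(0,T;H)}$, and hence $\|\widehat{v}_h\|_{W^{1,\infty}(0,T;H)}$ and the separate term $\|\widehat{v}_h\|_{L^{\infty}(0,T;V)}$.

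For the $\theta$-contributions, \eqref{hat1} identifies $\tfrac{d\widehat{\theta}_h}{dt} = \delta_h\theta_n$, a piecewise-constant function whose $L^2(0,T;V)$-norm is bounded by Lemma \ref{lemkuri6}, while \eqref{rem3} with (A12) and Lemma \ref{lemkuri2} give $\|\widehat{\theta}_h\|_{L^{\infty}(0,T;V)} = \max\{\|\theta_0\|_V,\|\overline{\theta}_h\|_{L^{\infty}(0,T;V)}\} \le C$, which in particular majorizes $\|\widehat{\theta}_h\|_{L^2(0,T;V)}$ up to the factor $T^{1/2}$; adding these gives $\|\widehat{\theta}_h\|_{H^1(0,T;V)} + \|\widehat{\theta}_h\|_{L^{\infty}(0,T;V)} \le C$. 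Summing all the contributions established above produces the asserted estimate. I do not foresee any genuine analytic difficulty in this lemma; the only point requiring care is the bookkeeping — correctly matching each discrete difference quotient to its barred interpolant via \eqref{hat1}--\eqref{overandunderline}, and confirming that the $T$-dependence coming through the number of steps $N = T/h$ in the cited Gronwall-type estimates is harmless.
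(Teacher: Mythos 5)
Your argument is correct and coincides with the paper's own (one-line) proof: the bound is obtained exactly by combining the interpolation identities \eqref{rem1}--\eqref{rem3} (together with $\frac{d\widehat{\varphi}_h}{dt}=\overline{v}_h$, $\frac{d\widehat{v}_h}{dt}=\overline{z}_h$, $\frac{d\widehat{\theta}_h}{dt}=\delta_h\theta_n$) with the a priori estimates of Lemmas \ref{lemkuri1}, \ref{lemkuri4}, \ref{lemkuri2} and \ref{lemkuri6}. Your version merely spells out the bookkeeping that the paper leaves to the reader.
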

\begin{proof}
We can check this lemma by 
\eqref{rem1}-\eqref{rem3}, 
Lemmas \ref{lemkuri1}, \ref{lemkuri4}, \ref{lemkuri2} and \ref{lemkuri6}. 
\end{proof}

\begin{prth1.2e}
By Lemmas \ref{lemkuri1}, \ref{lemkuri4}-\ref{lemkuri8}, 
and \eqref{rem4}-\eqref{rem6}, 
there exist some functions   
\begin{align*}
&\theta \in H^1(0, T; V) \cap L^{\infty}(0, T; V) \cap L^{\infty}(0, T; D(A_{1})),  
\\ 
&\varphi \in L^{\infty}(0, T; V) \cap L^2(0, T; D(A_{2})), 
\\ 
&\xi \in L^{\infty}(0, T; H)  
\end{align*} 
such that 
\begin{align*}
\frac{d\varphi}{dt} \in L^{\infty}(0, T; V) \cap L^2(0, T; D(B_{1})),\ 
\frac{d^2\varphi}{dt^2} \in L^{\infty}(0, T; H)  
\end{align*}
and 
\begin{align}
\label{weak1} 
&\widehat{\varphi}_{h} \to \varphi 
\quad \mbox{weakly$^{*}$ in}\ W^{1, \infty}(0, T; V),  
\\[2.5mm] 
\notag 
&\overline{v}_{h} \to \frac{d\varphi}{dt}     
\quad \mbox{weakly$^{*}$ in}\ L^{\infty}(0, T; V),  
\\[2.5mm] 
\label{weak2}
&\widehat{v}_{h} \to \frac{d\varphi}{dt} 
\quad \mbox{weakly$^{*}$ in}\ W^{1, \infty}(0, T; H) \cap L^{\infty}(0, T; V),  
\\[2.5mm] 
\notag 
&\overline{z}_{h} \to \frac{d^2\varphi}{dt^2}   
\quad \mbox{weakly$^{*}$ in}\ L^{\infty}(0, T; H),  
\\[2.5mm] 
\label{weak3}
&L\overline{z}_{h} \to L\frac{d^2\varphi}{dt^2}   
\quad \mbox{weakly$^{*}$ in}\ L^{\infty}(0, T; H),  
\\[2.5mm] 
\label{weak4}
&\widehat{\theta}_{h} \to \theta  
\quad \mbox{weakly$^{*}$ in}\ H^1(0, T; V) \cap L^{\infty}(0, T; V),   
\\[2.5mm] 
\notag 
&\overline{\varphi}_{h} \to \varphi    
\quad \mbox{weakly$^{*}$ in}\ L^{\infty}(0, T; V),  
\\[2.5mm] 
\notag 
&\overline{\theta}_{h} \to \theta     
\quad \mbox{weakly$^{*}$ in}\ L^{\infty}(0, T; V),  
\\[2.5mm] 
\label{weak5}
&A_{1}\overline{\theta}_{h} \to A_{1}\theta     
\quad \mbox{weakly$^{*}$ in}\ L^{\infty}(0, T; H),  
\\[2.5mm]  
\label{weak6}
&B_{1}\overline{v}_{h} \to B_{1}\frac{d\varphi}{dt}      
\quad \mbox{weakly in}\ L^2(0, T; H),  
\\[2.5mm] 
\label{weak7}
&A_{2}\overline{\varphi}_{h} \to A_{2}\varphi      
\quad \mbox{weakly in}\ L^2(0, T; H),  
\\[2.5mm] 
\label{weak8}
&\Phi\overline{\varphi}_{h} \to \xi      
\quad \mbox{weakly$^{*}$ in}\ L^{\infty}(0, T; H), 
\\[2.5mm]
\label{weak9}
&B_{2}\overline{\theta}_{h} \to B_{2}\theta 
\quad \mbox{weakly$^{*}$ in}\ L^{\infty}(0, T; H)   
\end{align}
as $h=h_{j}\to+0$. 
Here, since we infer from Lemma \ref{lemkuri8}, 
the compactness of the embedding $V \hookrightarrow H$ and 
the convergence \eqref{weak1} 
that   
\begin{align}\label{stronghatvarphi}
\widehat{\varphi}_{h} \to \varphi  
\quad \mbox{strongly in}\ C([0, T]; H)
\end{align}
as $h=h_{j}\to+0$ (see e.g., \cite[Section 8, Corollary 4]{Simon}), 
we see from \eqref{rem4} and Lemma \ref{lemkuri4} that   
\begin{align}\label{strongoverlinevarphi}
\overline{\varphi}_{h} \to \varphi  
\quad \mbox{strongly in}\ L^{\infty}(0, T; H)
\end{align}
as $h=h_{j}\to+0$. 
Hence the convergences \eqref{weak8} and \eqref{strongoverlinevarphi} yield that   
\begin{align*}
\int_{0}^{T}(\Phi\overline{\varphi}_{h}(t), \overline{\varphi}_{h}(t))_{H}\,dt 
\to \int_{0}^{T}(\xi(t), \varphi(t))_{H}\,dt 
\end{align*}
as $h=h_{j}\to+0$ 
and then it holds that  
\begin{align}\label{xiPhivarphi}
\xi = \Phi\varphi  \quad\mbox{in}\ H\ \mbox{a.e.\ on}\ (0, T)  
\end{align}
(see e.g., \cite[Lemma 1.3, p.\ 42]{Barbu1}).  
On the other hand, it follows from Lemma \ref{lemkuri8}, 
the compactness of the embedding $V \hookrightarrow H$ and \eqref{weak4} 
that 
\begin{align}\label{stronghattheta}
\widehat{\theta}_{h} \to \theta  
\quad \mbox{strongly in}\ C([0, T]; H)
\end{align}
as $h=h_{j}\to+0$. 
Similarly, we derive from \eqref{weak2} that 
\begin{align}\label{stronghatv}
\widehat{v}_{h} \to \frac{d\varphi}{dt}  
\quad \mbox{strongly in}\ C([0, T]; H)
\end{align}
as $h=h_{j}\to+0$. 
Therefore, 
combining \eqref{weak1}, \eqref{weak3}-\eqref{stronghatv} and (A11), 
we can verify that there exists a solution of \ref{P}. 
\qed  
\end{prth1.2e}

\vspace{10pt}
 
\section{Uniqueness for \ref{P}}\label{Sec5}

This section proves uniqueness of solutions to \ref{P}.     
\begin{prth1.2u}
We let $(\theta, \varphi)$, $(\overline{\theta}, \overline{\varphi})$ 
be two solutions of \ref{P} 
and put 
$\widetilde{\theta}:=\theta-\overline{\theta}$, 
$\widetilde{\varphi}:=\varphi-\overline{\varphi}$. 
Then the identity \eqref{df2}, the Young inequality, (A6), (A11), Lemma \ref{lemkuri1},  
the continuity of the embedding $V \hookrightarrow H$ and (A2) 
imply that 
there exists a constant $C_{1}=C_{1}(T)>0$ satisfying 
\begin{align}\label{cocoro1}
&\frac{1}{2}\frac{d}{dt}\left\|L^{1/2}\frac{d\widetilde{\varphi}}{dt}(t)\right\|_{H}^2 
+ \left(B_{1}\frac{d\widetilde{\varphi}}{dt}(t), \frac{d\widetilde{\varphi}}{dt}(t)\right)_{H} 
+ \frac{1}{2}\frac{d}{dt}\Bigl\|A_{2}^{1/2}\widetilde{\varphi}(t)\Bigr\|_{H}^2 
\notag \\[1mm] 
&= \left(B_{2}\widetilde{\theta}(t), \frac{d\widetilde{\varphi}}{dt}(t)\right)_{H} 
     - \left(\Phi\varphi(t)-\Phi\overline{\varphi}(t), 
                                                          \frac{d\widetilde{\varphi}}{dt}(t)\right)_{H} 
      - \left({\cal L}\varphi(t)-{\cal L}\overline{\varphi}(t), 
                                                          \frac{d\widetilde{\varphi}}{dt}(t)\right)_{H} 
\notag \\ 
&\leq \left(B_{2}\widetilde{\theta}(t), \frac{d\widetilde{\varphi}}{dt}(t)\right)_{H}  
         + \frac{C_{\Phi}^2}{2}(1+\|\varphi(t)\|_{V}^{p} + \|\overline{\varphi}(t)\|_{V}^{q})^2
                                                                               \|\widetilde{\varphi}(t)\|_{V}^2 
\notag \\ 
&\,\quad+ \frac{C_{{\cal L}}^2}{2}\|\widetilde{\varphi}(t)\|_{H}^2 
         + \left\|\frac{d\widetilde{\varphi}}{dt}(t)\right\|_{H}^2 
\notag \\[1.5mm]
&\leq  \left(B_{2}\widetilde{\theta}(t), \frac{d\widetilde{\varphi}}{dt}(t)\right)_{H} 
         + C_{1}\|\widetilde{\varphi}(t)\|_{V}^2 
         + \frac{1}{c_{L}}\left\|L^{1/2}\frac{d\widetilde{\varphi}}{dt}(t)\right\|_{H}^2    
\end{align}
for a.a.\ $t \in (0, T)$. 
Here we have from the Young inequality, (A2) and 
the continuity of the embedding $V \hookrightarrow H$ that 
there exists a constant $C_{2}>0$ such that 
\begin{align}\label{cocoro2}
\frac{1}{2}\frac{d}{dt}\|\widetilde{\varphi}(t)\|_{H}^2 
= \left(\frac{d\widetilde{\varphi}}{dt}(t), \widetilde{\varphi}(t)\right)_{H} 
\leq \frac{1}{2c_{L}}\left\|L^{1/2}\frac{d\widetilde{\varphi}}{dt}(t)\right\|_{H}^2 
       + C_{2}\|\widetilde{\varphi}(t)\|_{V}^2    
\end{align}
for a.a.\ $t \in (0, T)$. 
We see from (A3) that 
\begin{align}\label{cocoro3}
&\frac{1}{2}\Bigl\|A_{2}^{1/2}\widetilde{\varphi}(t)\Bigr\|_{H}^2 
+ \frac{1}{2}\|\widetilde{\varphi}\|_{H}^2 
\notag \\ 
&= \frac{1}{2}\langle A_{2}^{*}\widetilde{\varphi}(t), 
                                            \widetilde{\varphi}(t) \rangle_{V^{*}, V} 
    + \frac{1}{2}\|\widetilde{\varphi}\|_{H}^2 
\geq \frac{\omega_{2, 1}}{2}\|\widetilde{\varphi}(t)\|_{V}^2. 
\end{align}
Moreover, the identity \eqref{df1} yields that 
\begin{align}\label{cocoro4}
\left(B_{2}\widetilde{\theta}(t), \frac{d\widetilde{\varphi}}{dt}(t)\right)_{H} 
&= \frac{1}{\eta}\left(B_{2}\widetilde{\theta}(t), 
          - \frac{d\widetilde{\theta}}{dt}(t) - A_{1}\widetilde{\theta}(t) \right)_{H} 
\notag \\ 
&= -\frac{1}{2\eta}\frac{d}{dt}\Bigl\|B_{2}^{1/2}\widetilde{\theta}(t)\Bigr\|_{H}^2 
     - \frac{1}{\eta}(B_{2}\widetilde{\theta}(t), A_{1}\widetilde{\theta}(t))_{H}.  
\end{align}
Therefore it follows from \eqref{cocoro1}-\eqref{cocoro4} and (A4) that 
there exists a constant $C_{3}=C_{3}(T)>0$ such that 
\begin{align*}
&\frac{1}{2}\left\|L^{1/2}\frac{d\widetilde{\varphi}}{dt}(t)\right\|_{H}^2 
+ \frac{\omega_{2, 1}}{2}\|\widetilde{\varphi}(t)\|_{V}^2 
+ \frac{1}{2\eta}\|B_{2}^{1/2}\widetilde{\theta}(t)\|_{H}^2 
\notag \\ 
&\leq C_{3}\int_{0}^{t}\left\|L^{1/2}\frac{d\widetilde{\varphi}}{dt}(s)\right\|_{H}^2\,ds 
         + C_{3}\int_{0}^{t}\|\widetilde{\varphi}(s)\|_{V}^2\,ds      
\end{align*} 
for a.a.\ $t \in (0, T)$, 
whence we obtain that $\frac{d\widetilde{\varphi}}{dt}=\widetilde{\varphi}=0$ 
by the Gronwall lemma and (A2). 
Then the identity \eqref{df1} leads to the identity  
\begin{align}\label{cocoro5}
\frac{1}{2}\frac{d}{dt}\|\widetilde{\theta}(t)\|_{H}^2 
+ (A_{1}\widetilde{\theta}(t), \widetilde{\theta}(t))_{H} 
= 0.  
\end{align}
Thus it holds that $\widetilde{\theta}=0$.  
\qed
\end{prth1.2u}

\vspace{10pt}
 
\section{Error estimates}\label{Sec6}

In this section we will show Theorem \ref{erroresti}.   

\begin{prth1.3}
Let $h_{4}$ be as in Lemma \ref{lemkuri2}.  
Then, putting $z:=\frac{dv}{dt}$, 
we derive from the identity $\frac{d\widehat{v}_{h}}{dt}=\overline{z}_{h}$, 
the second equation in \ref{Ph} and \eqref{df2} that 
\begin{align}\label{e1}
&\frac{1}{2}\frac{d}{dt}\|L^{1/2}(\widehat{v}_{h}(t)-v(t))\|_{H}^2 
\notag \\ 
&= (L(\overline{z}_{h}(t)-z(t)), \widehat{v}_{h}(t)-\overline{v}_{h}(t))_{H} 
     + (L(\overline{z}_{h}(t)-z(t)), \overline{v}_{h}(t)-v(t))_{H} 
\notag \\ 
&= (L(\overline{z}_{h}(t)-z(t)), \widehat{v}_{h}(t)-\overline{v}_{h}(t))_{H} 
     - (B_{1}(\overline{v}_{h}(t)-v(t)), \overline{v}_{h}(t)-v(t))_{H} 
\notag \\ 
&\,\quad - (A_{2}(\overline{\varphi}_{h}(t)-\varphi(t)), \overline{v}_{h}(t)-v(t))_{H} 
- (\Phi\overline{\varphi}_{h}(t)-\Phi\varphi(t), \overline{v}_{h}(t)-v(t))_{H} 
\notag \\ 
&\,\quad- ({\cal L}\overline{\varphi}_{h}(t)-{\cal L}\varphi(t), 
                                                                          \overline{v}_{h}(t)-v(t))_{H} 
+ (B_{2}(\overline{\theta}_{h}(t) - \theta(t)), \overline{v}_{h}(t)-v(t))_{H}.    
\end{align}
Here the boundedness of the operator $L : H \to H$ 
implies that 
there exists a constant $C_{1}>0$ such that 
\begin{align}\label{e2}
(L(\overline{z}_{h}(t)-z(t)), \widehat{v}_{h}(t)-\overline{v}_{h}(t))_{H} 
&\leq \|L(\overline{z}_{h}(t)-z(t))\|_{H}\|\widehat{v}_{h}(t)-\overline{v}_{h}(t)\|_{H} 
\notag \\ 
&\leq C_{1}\|\overline{z}_{h}(t)-z(t)\|_{H}\|\widehat{v}_{h}(t)-\overline{v}_{h}(t)\|_{H} 
\end{align}
for a.a.\ $t \in (0, T)$ and all $h \in (0, h_{4})$. 
We have from 
the identities $\overline{v}_{h}=\frac{d\widehat{\varphi}_{h}}{dt}$, $v=\frac{d\varphi}{dt}$ 
and the boundedness of the operator $A_{2}^* : V \to V^*$ that 
there exists a constant $C_{2}>0$ such that  
\begin{align}\label{e3}
&- (A_{2}(\overline{\varphi}_{h}(t)-\varphi(t)), \overline{v}_{h}(t)-v(t))_{H} 
\notag \\ 
&= - \langle A_{2}^{*}(\overline{\varphi}_{h}(t)-\widehat{\varphi}_{h}(t)), 
                                                    \overline{v}_{h}(t)-v(t) \rangle_{V^{*}, V} 
- \frac{1}{2}\frac{d}{dt}\|A_{2}^{1/2}(\widehat{\varphi}_{h}(t)-\varphi(t))\|_{H}^2 
\notag \\ 
&\leq C_{2}\|\overline{\varphi}_{h}(t)-\widehat{\varphi}_{h}(t)\|_{V}
                                                                   \|\overline{v}_{h}(t)-v(t)\|_{V} 
- \frac{1}{2}\frac{d}{dt}\|A_{2}^{1/2}(\widehat{\varphi}_{h}(t)-\varphi(t))\|_{H}^2 
\end{align}
for a.a.\ $t \in (0, T)$ and all $h \in (0, h_{4})$. 
We see from (A6), Lemma \ref{lemkuri1}, the Young inequality and (A2) that 
there exists a constant $C_{3}=C_{3}(T)>0$ such that 
\begin{align}\label{e4}
&- (\Phi\overline{\varphi}_{h}(t)-\Phi\varphi(t), \overline{v}_{h}(t)-v(t))_{H} 
\notag \\ 
&\leq C_{\Phi}(1 + \|\overline{\varphi}_{h}(t)\|_{V}^{p} + \|\varphi(t)\|_{V}^{q})
               \|\overline{\varphi}_{h}(t)-\varphi(t)\|_{V}\|\overline{v}_{h}(t)-v(t)\|_{H} 
\notag \\ 
&\leq C_{3}\|\overline{\varphi}_{h}(t)-\varphi(t)\|_{V}\|\overline{v}_{h}(t)-v(t)\|_{H} 
\notag \\ 
&\leq \frac{C_{3}}{2}\|\overline{\varphi}_{h}(t)-\varphi(t)\|_{V}^2 
         + \frac{C_{3}}{2}\|\overline{v}_{h}(t)-v(t)\|_{H}^2 
\notag \\ 
&\leq C_{3}\|\overline{\varphi}_{h}(t)-\widehat{\varphi}_{h}(t)\|_{V}^2 
         + C_{3}\|\widehat{\varphi}_{h}(t)-\varphi(t)\|_{V}^2 
\notag \\
    &\,\quad+ C_{3}\|\overline{v}_{h}(t)-\widehat{v}_{h}(t)\|_{H}^2 
         + \frac{C_{3}}{c_{L}}\|L^{1/2}(\widehat{v}_{h}(t)-v(t))\|_{H}^2 
\end{align}
for a.a.\ $t \in (0, T)$ and all $h \in (0, h_{4})$. 
The condition (A11), the continuity of the embedding $V \hookrightarrow H$, 
the Young inequality and (A2) mean that 
there exists a constant $C_{4}>0$ such that 
\begin{align}\label{e5}
&- ({\cal L}\overline{\varphi}_{h}(t)-{\cal L}\varphi(t), \overline{v}_{h}(t)-v(t))_{H} 
\notag \\ 
&\leq C_{4}\|\overline{\varphi}_{h}(t)-\varphi(t)\|_{V}\|\overline{v}_{h}(t)-v(t)\|_{H} 
\notag \\ 
&\leq \frac{C_{4}}{2}\|\overline{\varphi}_{h}(t)-\varphi(t)\|_{V}^2 
         + \frac{C_{4}}{2}\|\overline{v}_{h}(t)-v(t)\|_{H}^2 
\notag \\ 
&\leq C_{4}\|\overline{\varphi}_{h}(t)-\widehat{\varphi}_{h}(t)\|_{V}^2 
         + C_{4}\|\widehat{\varphi}_{h}(t)-\varphi(t)\|_{V}^2 
\notag \\
    &\,\quad+ C_{4}\|\overline{v}_{h}(t)-\widehat{v}_{h}(t)\|_{H}^2 
         + \frac{C_{4}}{c_{L}}\|L^{1/2}(\widehat{v}_{h}(t)-v(t))\|_{H}^2   
\end{align}
for a.a.\ $t \in (0, T)$ and all $h \in (0, h_{4})$. 
It follows from the first equation in \ref{Ph} and \eqref{df1} that 
\begin{align}\label{e6}
&(B_{2}(\overline{\theta}_{h}(t) - \theta(t)), \overline{v}_{h}(t)-v(t))_{H} 
\notag \\ 
&= - \frac{1}{\eta}\Bigl(B_{2}(\overline{\theta}_{h}(t) - \theta(t)), 
             \frac{d\widehat{\theta}_{h}}{dt}(t)-\frac{d\theta}{dt}(t)\Bigr)_{H} 
\notag \\
&\,\quad- \frac{1}{\eta}(B_{2}(\overline{\theta}_{h}(t) - \theta(t)), 
                                                 A_{1}(\overline{\theta}_{h}(t) - \theta(t)))_{H} 
\notag \\ 
&= - \frac{1}{\eta}
           \Bigl\langle B_{2}^{*}(\overline{\theta}_{h}(t) - \widehat{\theta}_{h}(t)), 
                  \frac{d\widehat{\theta}_{h}}{dt}(t)-\frac{d\theta}{dt}(t) \Bigr\rangle_{V^*, V} 
     - \frac{1}{2\eta}\frac{d}{dt}\|B_{2}^{1/2}(\widehat{\theta}_{h}(t) - \theta(t))\|_{H}^2  
\notag \\ 
&\,\quad - \frac{1}{\eta}(B_{2}(\overline{\theta}_{h}(t) - \theta(t)), 
                                                 A_{1}(\overline{\theta}_{h}(t) - \theta(t)))_{H}.  
\end{align}
Hence we derive from \eqref{e1}-\eqref{e6}, 
the integration over $(0, t)$, where $t \in [0, T]$, 
the boundedness of the operator $B_{2}^{*} : V \to V^*$, 
\eqref{rem4}-\eqref{rem6}, 
Lemmas \ref{lemkuri4} and \ref{lemkuri6} that 
there exists a constant $C_{5}=C_{5}(T)>0$ such that 
\begin{align}\label{e7}
&\frac{1}{2}\|L^{1/2}(\widehat{v}_{h}(t)-v(t))\|_{H}^2 
+ \frac{1}{2}\|A_{2}^{1/2}(\widehat{\varphi}_{h}(t)-\varphi(t))\|_{H}^2 
+ \int_{0}^{t}\|B_{1}^{1/2}(\overline{v}_{h}(s)-v(s))\|_{H}^2\,ds  
\notag \\ 
&+ \frac{1}{2\eta}\|B_{2}^{1/2}(\widehat{\theta}_{h}(t) - \theta(t))\|_{H}^2  
  + \frac{1}{\eta}\int_{0}^{t}(B_{2}(\overline{\theta}_{h}(s) - \theta(s)), 
                                                 A_{1}(\overline{\theta}_{h}(s) - \theta(s)))_{H}\,ds   
\notag \\ 
&\leq C_{5}h   
+ C_{5}\int_{0}^{t}\|\widehat{\varphi}_{h}(s)-\varphi(s)\|_{V}^2\,ds  
+ C_{5}\int_{0}^{t}\|L^{1/2}(\widehat{v}_{h}(s)-v(s))\|_{H}^2\,ds  
\end{align}
for all $t \in [0, T]$ and all $h \in (0, h_{4})$. 
Here 
the identities $\frac{d\widehat{\varphi}_{h}}{dt}=\overline{v}_{h}$, $\frac{d\varphi}{dt}=v$, 
the Young inequality, (A2) 
and the continuity of the embedding $V \hookrightarrow H$ yield that 
there exists a constant $C_{6}>0$ such that 
\begin{align}\label{e8}
&\frac{1}{2}\frac{d}{dt}\|\widehat{\varphi}_{h}(t)-\varphi(t)\|_{H}^2 
\notag \\ 
&= (\overline{v}_{h}(t)-v(t), \widehat{\varphi}_{h}(t)-\varphi(t))_{H} 
\notag \\ 
&\leq \frac{1}{2}\|\overline{v}_{h}(t)-v(t)\|_{H}^2 
         + \frac{1}{2}\|\widehat{\varphi}_{h}(t)-\varphi(t)\|_{H}^2 
\notag \\ 
&\leq \|\overline{v}_{h}(t)-\widehat{v}_{h}(t)\|_{H}^2 
         + \frac{1}{c_{L}}\|L^{1/2}(\widehat{v}_{h}(t) - v(t))\|_{H}^2  
         + C_{6}\|\widehat{\varphi}_{h}(t)-\varphi(t)\|_{V}^2 
\end{align}
for a.a.\ $t \in (0, T)$ and all $h \in (0, h_{4})$. 
Thus, integrating \eqref{e8} over $(0, t)$, where $t \in [0, T]$, 
we deduce from \eqref{e7} and (A3) 
that there exists a constant $C_{7}=C_{7}(T)>0$ such that 
\begin{align}\label{e9}
&\frac{1}{2}\|L^{1/2}(\widehat{v}_{h}(t)-v(t))\|_{H}^2 
+ \frac{\omega_{2, 1}}{2}\|\widehat{\varphi}_{h}(t) - \varphi(t)\|_{V}^2 
+ \int_{0}^{t}\|B_{1}^{1/2}(\overline{v}_{h}(s)-v(s))\|_{H}^2\,ds  
\notag \\ 
&+ \frac{1}{2\eta}\|B_{2}^{1/2}(\widehat{\theta}_{h}(t) - \theta(t))\|_{H}^2  
  + \frac{1}{\eta}\int_{0}^{t}(B_{2}(\overline{\theta}_{h}(s) - \theta(s)), 
                                                 A_{1}(\overline{\theta}_{h}(s) - \theta(s)))_{H}\,ds 
\notag \\ 
&\leq C_{7}h   
+ C_{7}\int_{0}^{t}\|\widehat{\varphi}_{h}(s)-\varphi(s)\|_{V}^2\,ds  
+ C_{7}\int_{0}^{t}\|L^{1/2}(\widehat{v}_{h}(s)-v(s))\|_{H}^2\,ds  
\end{align}    
for all $t \in [0, T]$ and all $h \in (0, h_{4})$. 

Next the first equation in \ref{Ph} and \eqref{df1} lead to the identity
\begin{align}\label{e10}
&\frac{1}{2}\frac{d}{dt}\|\widehat{\theta}_{h}(t) - \theta(t)\|_{H}^2 
\notag \\ 
&= - \eta(\overline{v}_{h}(t)-v(t), \widehat{\theta}_{h}(t)-\theta(t))_{H} 
    - (A_{1}(\overline{\theta}_{h}(t)-\theta(t)), 
                              \widehat{\theta}_{h}(t) - \overline{\theta}_{h}(t))_{H} 
\notag \\
    &\,\quad- \langle A_{1}^{*}(\overline{\theta}_{h}(t)-\theta(t)), 
                                 \overline{\theta}_{h}(t)-\theta(t) \rangle_{V^{*}, V}.  
\end{align}
Here we use the Young inequality and (A2) to infer that 
\begin{align}\label{e11}
&- (\overline{v}_{h}(t)-v(t), \widehat{\theta}_{h}(t)-\theta(t))_{H} 
\notag \\ 
&\leq \frac{1}{2}\|\overline{v}_{h}(t)-v(t)\|_{H}^2 
       + \frac{1}{2}\|\widehat{\theta}_{h}(t)-\theta(t)\|_{H}^2 
\notag \\ 
&\leq \|\overline{v}_{h}(t)-\widehat{v}_{h}(t)\|_{H}^2 
         + \|\widehat{v}_{h}(t)-v(t)\|_{H}^2 
         + \frac{1}{2}\|\widehat{\theta}_{h}(t)-\theta(t)\|_{H}^2 
\notag \\ 
&\leq \|\overline{v}_{h}(t)-\widehat{v}_{h}(t)\|_{H}^2 
         + \frac{1}{c_{L}}\|L^{1/2}(\widehat{v}_{h}(t)-v(t))\|_{H}^2 
         + \frac{1}{2}\|\widehat{\theta}_{h}(t)-\theta(t)\|_{H}^2.  
\end{align} 
We have from (A3) that 
\begin{align}\label{e12}
&- \langle A_{1}^{*}(\overline{\theta}_{h}(t)-\theta(t)), 
                                      \overline{\theta}_{h}(t)-\theta(t) \rangle_{V^{*}, V} 
\notag \\ 
&\leq -\omega_{1, 1}\|\overline{\theta}_{h}(t)-\theta(t)\|_{V}^2 
         + \|\overline{\theta}_{h}(t)-\theta(t)\|_{H}^2  
\notag \\ 
&\leq -\omega_{1, 1}\|\overline{\theta}_{h}(t)-\theta(t)\|_{V}^2 
         + 2\|\overline{\theta}_{h}(t) - \widehat{\theta}_{h}(t)\|_{H}^2 
         + 2\|\widehat{\theta}_{h}(t) - \theta(t)\|_{H}^2.  
\end{align} 
Hence, owing to \eqref{e10}-\eqref{e12}, 
the integration over $(0, t)$, where $t \in [0, T]$, 
\eqref{rem5}, \eqref{rem6}, 
Lemmas \ref{lemkuri4} and \ref{lemkuri6}, 
there exists a constant $C_{8}=C_{8}(T)>0$ such that 
\begin{align}\label{e13}
&\frac{1}{2}\|\widehat{\theta}_{h}(t) - \theta(t)\|_{H}^2 
+ \omega_{1, 1}\int_{0}^{t}\|\overline{\theta}_{h}(s) - \theta(s)\|_{V}^2\,ds 
\notag \\ 
&\leq C_{8}h 
+ C_{8}\int_{0}^{t} \|L^{1/2}(\widehat{v}_{h}(s)-v(s))\|_{H}^2\,ds 
+ C_{8}\int_{0}^{t}\|\widehat{\theta}_{h}(s)-\theta(s)\|_{H}^2\,ds 
\end{align}
for all $t \in [0, T]$ and all $h \in (0, h_{4})$. 
 
Therefore we can obtain Theorem \ref{erroresti} 
by combining \eqref{e9}, \eqref{e13} 
and by applying the Gronwall lemma. 
\qed 
\end{prth1.3}

\section*{Acknowledgments}
The author is supported by JSPS Research Fellowships 
for Young Scientists (No.\ 18J21006).   
%
%
%

\end{document}